\tikzset{filled/.style={minimum width=5pt,inner sep=0pt,circle,fill=black}}
\newtheorem{theorem}{Theorem}[section]
\newtheorem{lemma}[theorem]{Lemma}
\newtheorem{corollary}[theorem]{Corollary}
\newtheorem{conjecture}[theorem]{Conjecture}
\theoremstyle{definition}
\newtheorem{definition}[theorem]{Definition}
\newtheorem{example}[theorem]{Example}
\newtheorem{procedure}[theorem]{Procedure}
\theoremstyle{remark}
\newtheorem{remark}[theorem]{Remark}
\numberwithin{equation}{section}
\numberwithin{figure}{section}
\renewcommand{\mod}{\operatorname{mod}}
\newcommand{\Z}{\mathbb{Z}}
\DeclarePairedDelimiter\floor{\lfloor}{\rfloor}
\title[Cordial Trees and Friendship Graphs]{Klein Cordial Trees and Odd Cyclic Cordial Friendship Graphs}
\author[Erickson, Herden, Meddaugh, Sepanski, $\ldots$]{William Q. Erickson, Daniel Herden, Jonathan Meddaugh, Mark R. Sepanski, Isaac Echols, Cordell Hammon, Jorge Marchena-Menendez, Jasmin Mohn, Blanca Radillo-Murguia, Indalecio Ruiz-Bolanos}
\address{
All authors:
Department of Mathematics,
Baylor University,
1410 S.~4th Street,
Waco, TX 76706, USA}
\email{will\_erickson@baylor.edu, daniel\_herden@baylor.edu, jonathan\_meddaugh@baylor.edu,
mark\_sepanski@baylor.edu}
\date{\today}
\begin{document}
\keywords{Cordial labelings, trees, friendship graphs, MAD pairs}
\subjclass[2020]{Primary: 05C78; Secondary: 05C05}

\begin{abstract}
    For a graph $G$ and an abelian group $A$, a labeling of the vertices of $G$ induces a labeling of the edges via the sum of adjacent vertex labels. Hovey introduced the notion of an $A$-cordial vertex labeling when both the vertex and edge labels are as evenly distributed as possible. Much work has since been done with trees, hypertrees, paths, cycles, ladders, prisms, hypercubes, and bipartite graphs. In this paper we show that
    all trees are $\mathbb{Z}_2^2$-cordial except for $P_4$ and $P_5$. In addition, we give numerous results relating to $\mathbb{Z}_m$-cordiality of the friendship graph $F_n$. The most general result shows that when $m$ is an odd multiple of $3$, then $F_n$ is $\mathbb{Z}_m$-cordial for all $n$. We also give a general conjecture to determine when $F_n$ is $\mathbb{Z}_m$-cordial.
\end{abstract}

\maketitle
\tableofcontents

\section{Introduction}

Many applications of graph theory reduce to questions of graph labeling. This has given rise to hundreds of labeling methods in the past 60 years \cite{gallian}. For a graph $G$ and an abelian group $A$, Hovey \cite{hovey} introduced the idea of an \emph{$A$-cordial labeling of $G$}.
These begin with a labeling of the vertices of $G$ by elements in $A$, and then
induce a labeling on the edges via the sum of adjacent vertex labels (see Definition~\ref{definition: induced labeling} below).
The labeling is said to be $A$-cordial if the distribution of vertex labels is as evenly distributed as possible and if the distribution of edge labels is as evenly distributed as possible (Definition \ref{definition: cordial}).
If $G$ admits an $A$-cordial labeling, we say that $G$ is $A$-cordial.
It has been shown that determining whether or not a graph admits an $A$-cordial labeling is NP-complete \cite{NPComplete}.

The $\Z_2$-cordiality of all trees was noted in \cite{Cahit}, while Hovey \cite{hovey} demonstrated that all trees also admit cordial labelings for  $\Z_3$, $\Z_4,$ and~$\Z_5$. These results were extended to $\Z_6$- and $\Z_7$-cordial labelings by Driscoll, Krop, and Nguyen \cite{treesAre6Cordial, treesAre7Cordial}.
Cordiality for hypertrees was studied by Cichacz, G\"{o}rlich, and Tuza \cite{CichaczGorlichTuza13}, Tuczyński, Wenus, and Wesek \cite{TuczynskiWenusKesek19}, and
 Cichacz and G\"{o}rlich \cite{CichaczGorlich21}.

Moving beyond trees, cordiality for paths and cycles was studied by Patrias and Pechenik \cite{patriasPechenik} and Cichacz \cite{Cichacz22}. Moving beyond cyclic groups,
$\mathbb{Z}_2^2$-cordiality has been studied for numerous graphs. For example, Riskin \cite{riskin} looked at $K_n$ and $K_{m,n}$, while Pechenik and Wise \cite{pechenikWise} looked at paths, cycles, ladders, prisms, and hypercubes.

In this paper, we give a complete answer to the ``seemingly difficult problem" \cite{riskin} of classifying all trees that admit a $\mathbb{Z}_2^2$-cordial labeling.
Precisely, Theorem \ref{theorem: main result on trees} shows that all trees, except $P_4$ and $P_5$, are $\mathbb{Z}_2^2$-cordial. We also conjecture (Conjecture \ref{conjecture: weakly cordial}) that trees are weakly $A$-cordial for any abelian group $A$, where \emph{weakly $A$-cordial} means that all but a finite number of trees are $A$-cordial.

Next we turn our attention to $\mathbb{Z}_m$-cordiality for friendship graphs. A \emph{friendship graph} $F_n$ is formed by joining $n$ copies of $C_3$ at a central vertex (Definition \ref{definition: friendship graph}). It has been shown that all friendship graphs are $\mathbb{Z}_4$-, $\mathbb{Z}_5$-, and $\mathbb{Z}_7$-cordial \cite{friendshipsAre4Cordial, friendshipsAre5Cordial, friendshipsAre7Cordial}.

In this paper, for $m$ odd, we first show that $F_n$ is $\mathbb{Z}_m$-cordial for all $n$ if and only if $F_1, F_2, \ldots, F_m$ are $\mathbb{Z}_m$-cordial (Theorem \ref{theorem: Fm or F2m reduction}).
When also ${3 \mid m}$, we use this to show that $F_n$ is $\mathbb{Z}_m$-cordial for all $n$ (Corollary~\ref{corollary done odd m div by 3}). In general, we show that $F_n$ is never $\mathbb{Z}_m$-cordial when
\[ 2\mid n, \,\, 4 \nmid n, \,\, m = \frac{3n}{d} \]
for $d$ a positive, odd divisor of $3n$ (Theorem \ref{theorem: obstruction}). In fact (Conjecture~\ref{conjecture for cordiality of friendship}), we conjecture that this is the only obstruction to $F_n$ being $\mathbb{Z}_m$-cordial.

A key idea for studying $\mathbb{Z}_m$-cordiality for friendship graphs is that of \emph{maximally additively disjoint} (\emph{MAD}) pairs in an abelian group $A$. These are
pairs $(a_i, b_i) \in A^2$, $1 \leq i \leq \mu$, such that
\[ a_1, b_1, \ldots, a_\mu, b_\mu, \, a_1 + b_1, \ldots, a_\mu + b_\mu \]
are all distinct. See \cite{Haanpaa04, Haanpaa07} for closely related ideas. In Theorem \ref{theorem: MAD}, we show that $\mathbb{Z}_m$ has exactly
$\floor*{\frac{m}{3}}$ MAD pairs when $m \not\equiv 6 \mod 12$ and $\floor*{\frac{m}{3}} - 1$ MAD pairs when $m \equiv 6 \mod 12$. As an application, this can be used to show that $F_1, F_2 \ldots,
F_{\floor*{m/3}}$ are $\mathbb{Z}_m$-cordial when $m \not\equiv 6 \mod 12$ (Corollary \ref{corollary up to m/3 via MAD}). In fact, for $m$ odd, this can be further pushed all the way up to $F_{\floor*{2m/3}- 1}$ or $F_{\floor*{2m/3}}$ (Theorems~\ref{theorem: to m/2} and \ref{theorem: F2m/3-1}), depending on a parity condition.

Finally, we observe that for a fixed graph $G$, there are at most a finite number of abelian groups $A$ for which $G$ is not $A$-cordial (Theorem~\ref{theorem: free cordial after}). This includes an explicit lower bound on $|A|$ that guarantees $A$-cordiality.

\section{Preliminaries}

Write $\mathbb{N}$ for $\mathbb{Z}_{\geq 0}.$
Throughout this paper, we will work with finite, simple, undirected graphs $G = (V, E)$, where $V=V(G)$ is the set of vertices and $E=E(G)$ is the set of edges. We write $A$ for an  abelian group. Given a labeling of the vertices of $G$ by elements of $A$, we will induce a labeling of the edges of $G$ by summing the corresponding vertex labels.

\begin{definition} \label{definition: induced labeling}
    Given a labeling $\ell:V \rightarrow A$ of the vertices of $G$ by elements of $A$, the \emph{induced labeling} $\ell:E \rightarrow A$ on the edges is given by
    \[ \ell((v_1, v_2)) = \ell(v_1) + \ell(v_2) \]
    for $(v_1, v_2) \in E$.
\end{definition}

For example, the vertex labeling of $P_3$ in Figure \ref{fig:labeled p3} gives rise to the displayed edge labels. When context allows, we will frequently refer to an edge or vertex by its label.

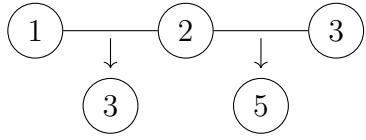
\begin{figure}[H]
    \centering
    \begin{tikzpicture}
    \tikzstyle{every node}=[draw,shape=circle];
    \path (0,0) node (1) {1};
    \path (2,0) node (2) {2};
    \path (4,0) node (3) {3};
    
    \draw (1)--(2)--(3);
    \draw[->] (1,-.1) -- (1,-.5);
    \draw[->] (3,-.1) -- (3,-.5);
    \path (1, -1) node (4) {3};
    \path (3, -1) node (5) {5};

\end{tikzpicture}
    \caption{A labeling of $P_3$ by elements of $\mathbb{Z}$}
    \label{fig:labeled p3}
\end{figure}

Given a labeling of the vertices of $G$ by an abelian group $A$, we will say the labeling is \emph{cordial} if the labels of $V$ are as evenly distributed as possible and if the induced labels of $E$ are as evenly distributed as possible.

\begin{definition} \label{definition: cordial}
    Let $A$ be an abelian group, $\ell:V \rightarrow A$ a labeling of the vertices of a graph $G$, and $\ell:E \rightarrow A$ the induced labeling of the edges.
    \begin{enumerate}
        \item
        Write $f_V:A\rightarrow \mathbb{N}$ and $f_E:A\rightarrow \mathbb{N}$ for the \emph{frequency distribution} of vertex and edge labels, respectively:
        \[ f_V(a) =  |\{v\in V \mid \ell(v) = a \}|  \]
        \[ f_E(a) =  |\{e\in E \mid \ell(e) = a \}| \]
        for $a \in A.$
        \item
        The labeling is called \emph{cordial} (or \emph{$A$-cordial}) if both
        \[ \max_{a_1, a_2 \in A} |f_V(a_1) - f_V(a_2)| \leq 1 \]
        \[ \max_{a_1, a_2 \in A} |f_E(a_1) - f_E(a_2)| \leq 1. \]
        \item
        $G$ is called $A$-\emph{cordial} if there exists an $A$-cordial labeling $\ell$ of~$G$.
    \end{enumerate}
\end{definition}

Note that if $|A| = m$ is finite, then letting $|V| = n_V$ and $|E| = n_E$, we can write
\[ n_V = q_V m + r_V \text{\,\, and \,\,} n_E = q_E m + r_E \]
per the Division Algorithm. Then the labeling is cordial if and only if (1) $f_V$ has a value of $q_V + 1$ for $r_V$ elements of $A$ and a value of $q_V$ for the rest of the elements of $A$, and (2) $f_E$ has a value of $q_E + 1$ for $r_E$ elements of $A$ and a value of $q_E$ for the rest of the elements of $A$.

Beginning in Section \ref{Tree Cordiality Set-Up}, we will study $\mathbb{Z}_2^2$-cordiality for trees. There we will use the following definition.

\begin{definition}
    We say a graph $H$ is an \emph{extension} of the graph $G$ (alternatively, $H$ \emph{extends} $G$) if $H$ contains $G$ as an induced subgraph. In this case we use $V'(H)$ and $E'(H)$ to denote vertices and edges in $H$ that are not in $G$.
\end{definition}

Beginning in Section \ref{section: obstruction to friendship cordiality}, we
will examine $\mathbb{Z}_m$-cordiality for the \emph{friendship graphs}.
It is a theorem of Erdős, Rényi, and Sós
\cite{ErdosRenyiSos}
that finite graphs in which every two vertices possess exactly one neighbor in common are exactly the friendship graphs.

\begin{definition} \label{definition: friendship graph}
    For $n \in \mathbb{N}$, the \emph{friendship graph} $F_n$ is the union of $n$ copies of a triangle $C_3$, joined at a central vertex.
\end{definition}

For example, Figure \ref{fig:F3} displays $F_3$.

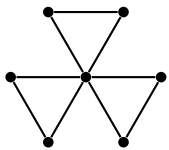
\begin{figure}[H]
    \centering
    \begin{tikzpicture}
    \node[filled, minimum width = 4pt] (0) at (0,0) {};
    \node[filled, minimum width = 4pt] (1) at (1*60: 1cm) {};
    \node[filled, minimum width = 4pt] (2) at (2*60: 1cm) {};
    \node[filled, minimum width = 4pt] (3) at (3*60: 1cm) {};
    \node[filled, minimum width = 4pt] (4) at (4*60: 1cm) {};
    \node[filled, minimum width = 4pt] (5) at (5*60: 1cm) {};
    \node[filled, minimum width = 4pt] (6) at (6*60: 1cm) {};
    
    \draw[black, thick] (0) -- (1);
    \draw[black, thick] (0) -- (2);
    \draw[black, thick] (1) -- (2);
    \draw[black, thick] (0) -- (3);
    \draw[black, thick] (0) -- (4);
    \draw[black, thick] (3) -- (4);
    \draw[black, thick] (0) -- (5);
    \draw[black, thick] (0) -- (6);
    \draw[black, thick] (5) -- (6);
\end{tikzpicture}
    \caption{The friendship graph $F_3$}
    \label{fig:F3}
\end{figure}

\section{Generalities and Finite Exceptions to Cordiality}

We begin with a straightforward observation, part of which can be found in \cite{hovey}.

\begin{lemma} \label{lemma: other cordial labelings}\mbox{}
\begin{itemize}
\item[$(a)$]
    Fix $s_0 \in A$. If $\ell: V \rightarrow A$ is an $A$-cordial labeling of $G$, then so is the labeling $\ell'$ defined by the \emph{label shifting}
    $\ell'(v) = \ell(v) - s_0$.
    Moreover, if $\varphi \in \operatorname{Aut}(A)$,
    then $\varphi \circ \ell$ is an $A$-cordial labeling
    of~$G$.
\item[$(b)$] Let $A$ be a ring with unity and let $u_0 \in A$ be a unit. If $\ell: V \rightarrow A$ is an $A$-cordial labeling of $G$, then so are the labelings $\ell'$ defined by $\ell'(v) = u_0\ell(v)$ and $\ell'(v) = \ell(v)u_0$, respectively.
\end{itemize}
\end{lemma}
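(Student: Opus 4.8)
The plan is to verify directly that each proposed transformation preserves both defining conditions of an $A$-cordial labeling, namely that $\max_{a_1,a_2}|f_V(a_1)-f_V(a_2)|\leq 1$ and $\max_{a_1,a_2}|f_E(a_1)-f_E(a_2)|\leq 1$. The crucial observation is that all three operations ($\ell'(v)=\ell(v)-s_0$, multiplication by an automorphism $\varphi$, and left/right multiplication by a unit $u_0$) are \emph{bijections} of $A$ onto itself. Once this is established, the proof for each case reduces to showing that applying a bijection of $A$ to the labels merely permutes the frequency distributions $f_V$ and $f_E$, and a permutation of the values of a function does not change the maximum pairwise difference.

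For part $(a)$, I would first handle label shifting. The key point is that the induced edge labels transform predictably: if $\ell'(v)=\ell(v)-s_0$, then for an edge $(v_1,v_2)$ we get $\ell'(v_1)+\ell'(v_2)=\ell(v_1)+\ell(v_2)-2s_0$, so the induced edge labeling is itself shifted by the fixed constant $2s_0$. Thus both $f_V$ and $f_E$ for $\ell'$ are obtained from those of $\ell$ by composing with the bijection $a\mapsto a+s_0$ (respectively $a\mapsto a+2s_0$). Explicitly, $f_V^{\ell'}(a)=f_V^{\ell}(a+s_0)$ and $f_E^{\ell'}(a)=f_E^{\ell}(a+2s_0)$, so the multisets of frequency values are unchanged and the cordiality bounds are preserved. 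For the automorphism $\varphi$, I would use the homomorphism property: the edge label of $(v_1,v_2)$ under $\varphi\circ\ell$ is $\varphi(\ell(v_1))+\varphi(\ell(v_2))=\varphi(\ell(v_1)+\ell(v_2))$, so again the induced edge labeling is simply $\varphi$ applied to the original edge labels. Since $\varphi$ is a bijection, $f_V^{\varphi\circ\ell}(a)=f_V^{\ell}(\varphi^{-1}(a))$ and similarly for $f_E$, giving the same conclusion.

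Part $(b)$ is entirely analogous once one notes that multiplication by a unit $u_0$ is a bijection of $A$ (its inverse being multiplication by $u_0^{-1}$), and that it distributes over addition: the edge label of $(v_1,v_2)$ under $\ell'(v)=u_0\ell(v)$ becomes $u_0\ell(v_1)+u_0\ell(v_2)=u_0(\ell(v_1)+\ell(v_2))$, so the induced edge labeling is obtained by multiplying the original edge labels by $u_0$. The right-multiplication case is identical, using $(v_1+v_2)u_0$. In both cases the frequency distributions are precomposed with the bijection given by multiplication by $u_0^{-1}$, so the cordiality conditions carry over verbatim.

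I expect no genuine obstacle here, as the result is essentially a formal consequence of the fact that bijections permute frequency values. The only point requiring minor care is the bookkeeping for the induced edge labels: one must confirm in each case that the transformation applied to vertex labels induces the \emph{same type} of transformation (a shift, an automorphism, or unit multiplication) on the edge labels, so that $f_E$ is also merely reindexed by a bijection. This follows immediately from additivity (for shifts and units) and the homomorphism property (for automorphisms), and is the one place where the argument for vertices and edges must be checked separately rather than quoted as identical.
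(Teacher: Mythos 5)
Your proof is correct, and it is the standard argument the authors evidently have in mind: the paper states this lemma as ``a straightforward observation'' and omits the proof entirely, so your write-up simply supplies the routine verification. You correctly identify the one point needing care (that each vertex-label transformation induces a bijective reindexing of the edge labels as well, via the shift by $2s_0$, the homomorphism property of $\varphi$, or distributivity for $u_0$); as a minor remark, part $(b)$ is in fact a special case of the automorphism claim in part $(a)$, since left or right multiplication by a unit is an automorphism of the additive group of $A$.
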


Next, we show that a graph $G$ can only have, at most, a finite number of (finite) abelian groups for which it is not cordial. Write $\Delta(G)$ for the maximal degree of a vertex in $G$.

\begin{theorem} \label{theorem: free cordial after}
    Let $G$ be a graph and $A$ an abelian group. If
    \[ |A| > |V| + |E|\Delta(G) -1,    \]
    then $G$ is $A$-cordial.
\end{theorem}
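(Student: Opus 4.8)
The plan is to exploit the fact that when $|A|$ is this large, the definition of cordiality collapses to a simple injectivity condition. Writing $m = |A|$, the hypothesis $m > |V| + |E|\Delta(G) - 1$ forces $m > |V|$ and $m > |E|$ (using $\Delta(G) \geq 1$ and $|V| \geq 1$ whenever $G$ has an edge; the edgeless case is immediate). In the notation following Definition~\ref{definition: cordial}, this means $q_V = q_E = 0$, so a labeling is $A$-cordial precisely when $f_V$ and $f_E$ take only the values $0$ and $1$. Equivalently, I must produce a vertex labeling $\ell \colon V \to A$ that is injective and whose induced edge labeling is also injective. Thus the whole theorem reduces to the following statement: for $|A|$ large enough, $G$ admits an injective vertex labeling all of whose induced edge sums are distinct.

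I would build such a labeling greedily. Fix any ordering $v_1, \dots, v_{|V|}$ of the vertices and assign labels one vertex at a time. Suppose $v_1, \dots, v_{i-1}$ have already received distinct labels inducing distinct edge labels, and consider which choices of $\ell(v_i)$ would ruin one of the two injectivity conditions. To keep the vertex labels distinct, $\ell(v_i)$ must avoid the at most $|V| - 1$ labels already used. To keep the edge labels distinct, consider the already-labeled neighbors $u_1, \dots, u_t$ of $v_i$, where $t \leq \deg(v_i) \leq \Delta(G)$; the newly created edges carry labels $\ell(v_i) + \ell(u_k)$, and these are automatically pairwise distinct because the $\ell(u_k)$ are, so the only danger is a collision with the set $S$ of previously assigned edge labels. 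For each $u_k$ and each $s \in S$, this rules out the single value $\ell(v_i) = s - \ell(u_k)$, contributing at most $t \cdot |S| \leq \Delta(G)|E|$ further forbidden values.

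Adding these counts, at most $(|V| - 1) + \Delta(G)|E| = |V| + |E|\Delta(G) - 1$ elements of $A$ are forbidden for $\ell(v_i)$. Since $m > |V| + |E|\Delta(G) - 1$, at least one admissible label remains, and I assign it to $v_i$. Carrying this induction through to $v_{|V|}$ produces an injective vertex labeling with distinct edge sums, which by the first paragraph is $A$-cordial, completing the proof.

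The argument is essentially pigeonhole bookkeeping, and the one place that needs care is the edge count. The hard part is confirming that the new edges created at step $i$ cannot collide with one another (which injectivity of the already-placed labels guarantees) and correctly bounding how many choices of $\ell(v_i)$ are excluded by collisions with prior edges. Bounding the neighbor count by $\Delta(G)$ rather than by $\deg(v_i)$ is precisely what makes the forbidden total uniform across all steps and matches the stated threshold exactly; everything else is a routine induction.
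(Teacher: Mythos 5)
Your proposal is correct and follows essentially the same greedy/inductive argument as the paper: label vertices one at a time, excluding at most $|V|-1$ values for vertex injectivity and at most $|E|\Delta(G)$ values for edge collisions. The only difference is that you spell out the reduction to the frequency-$\{0,1\}$ case in slightly more detail; otherwise the two proofs coincide.
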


\begin{proof}
    It suffices to show that, when $|A| > |V| + |E|\Delta(G) - 1$, there exists a labeling so that all edge and vertex labels have frequency $0$ or $1$. To do this, inductively label each vertex $v$ one at a time. At each stage, choose an element of $A$ that, first of all, is distinct from the previously chosen vertex labels. This excludes at most $|V| - 1$ elements of $A$. In addition, the vertex $v$ has at most $\Delta(G)$ adjacent edges that become labeled when $v$ is labeled.
    As the vertex labels are distinct, these new induced edge labels will be distinct amongst themselves. It remains to force no overlap with previous edge labels.
    In other words, if $(v, v_i)$ is such an edge, then the label for $v$ must also avoid any previously labeled edge minus $\ell(v_i)$. This excludes, at most, a further $|E|\Delta(G)$ elements of $A$.
\end{proof}

The above inequality can usually be dramatically improved by choosing a clever labeling order and keeping track of the exact number of resulting exclusions.

\section{Tree Cordiality Set-Up and Reductions} \label{Tree Cordiality Set-Up}

\begin{lemma}\label{lemma: extension of trees}
    Let $A$ be a finite abelian group with $|A| = n$, and let $T$ be an $A$-cordial tree with $nk$ vertices for some $k\in \mathbb{N}$. If $T^*$ is an extension of $T$ such that
    \[|V(T^*)| \leq nk +\floor*{\frac{n}{2}} + 1, \] then $T^*$ is also $A$-cordial.
\end{lemma}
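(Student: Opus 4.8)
The plan is to build a cordial labeling of $T^*$ by extending a fixed $A$-cordial labeling of $T$ one new vertex at a time. Set $t = |V(T^*)| - nk$, so that $0 \le t \le \floor*{\frac{n}{2}} + 1$; the case $t = 0$ forces $T^* = T$ and is immediate, so I assume $t \ge 1$. Since $T$ and $T^*$ are trees with $T$ an induced (hence connected) subgraph, each connected component of $T^* \setminus V(T)$ attaches to $T$ through a single edge, as a second such edge would create a cycle. Running a breadth-first search within each component, I would order the new vertices as $v_1, \dots, v_t$ so that each $v_i$ has a unique neighbor $p_i$ in $V(T) \cup \{v_1, \dots, v_{i-1}\}$. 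In particular $T^*$ has exactly $t$ more edges than $T$, namely the edges $(v_i, p_i)$, and the root $v_1$ of the first component satisfies $p_1 \in V(T)$.

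Next I would pin down the target distributions. As $T$ uses each label of $A$ exactly $k$ times on its vertices, cordiality of $T^*$ forces the $t$ new vertices to receive $t$ \emph{distinct} labels, since each label may be used at most once more. On the edge side, $|E(T)| = nk - 1$ leaves exactly one deficient label $c \in A$ of edge-frequency $k-1$, all others having frequency $k$; to reach a cordial distribution on the $nk + t - 1$ edges of $T^*$, I need the $t$ new edges to carry $t$ distinct labels, one of which is $c$. Thus the entire problem reduces to choosing $\ell(v_1), \dots, \ell(v_t)$ so that the vertex labels are distinct, the induced edge labels $\ell(v_i) + \ell(p_i)$ are distinct, and $c$ occurs among them.

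I would then assign labels greedily in the order $v_1, \dots, v_t$, first setting $\ell(v_1) = c - \ell(p_1)$, which is legitimate since $p_1 \in V(T)$ is already labeled and there are no prior constraints, so that the edge $(v_1, p_1)$ realizes $c$. At step $i$, the choice of $\ell(v_i)$ must avoid the at most $i-1$ earlier vertex labels as well as the at most $i-1$ values $\ell(v_j) + \ell(p_j) - \ell(p_i)$ that would duplicate an earlier edge; this forbids at most $2(i-1)$ elements. Because $t \le \floor*{\frac{n}{2}} + 1$, we have $2(i-1) \le 2(t-1) \le 2\floor*{\frac{n}{2}} \le n$, with strict inequality whenever $i \le t-1$ or $n$ is odd. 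Hence an admissible choice always survives in those cases, and avoiding previously used edge labels guarantees $c$ is realized exactly once.

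The delicate point, and the step I expect to be the real obstacle, is the final vertex when $n$ is even and $t = \frac{n}{2}+1$ is maximal, for then $2(t-1) = n$ is tight and the greedy step can in principle be blocked. Here the forbidden vertex-labels and the forbidden edge-shifts are two subsets of $A$ of size $\frac{n}{2}$, so $\ell(v_t)$ fails to exist only if these two sets \emph{partition} $A$. I would exclude this using the slack available one step earlier: at step $t-1$ the count $2(t-2) \le n-2$ leaves at least two admissible labels for $v_{t-1}$, and distinct choices there yield distinct pairs of forbidden sets at the final step, so I would argue that not both can produce an exact partition. Equivalently, one forces the two size-$\frac{n}{2}$ sets to overlap (for instance, if some earlier $v_j$ has a parent sharing the label of $p_t$, then $\ell(v_j)$ lies in both sets); the freedom to reorder which leaf of $T^*$ is taken last, together with the symmetries of Lemma~\ref{lemma: other cordial labelings}, provides additional room to secure this overlap. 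This is exactly where the extra ``$+1$'' in the bound $\floor*{\frac{n}{2}}+1$ is consumed, and verifying it is the crux of the proof; already the smallest instance $n = 2$, $t = 2$ exhibits the mechanism, the two new edges there being forced to cover all of $A$.
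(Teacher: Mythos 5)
There is a genuine gap, and it is self-inflicted. You assert that cordiality of $T^*$ forces the $t$ new edges to carry $t$ \emph{distinct} labels, one of which is the deficient label $c$. That is sufficient but not necessary: since $c$ starts at edge-frequency $k-1$ while every other label starts at $k$, the label $c$ may appear \emph{twice} among the new edges (ending at $k+1$ along with $t-2$ other labels, which is still cordial). Your stronger invariant is what produces the exclusion count $2(i-1)$ and hence the tight case $2(t-1)=n$ when $n$ is even and $t=\floor*{\frac{n}{2}}+1$ --- a case you explicitly do not resolve. The proposed patch (that the two size-$\frac{n}{2}$ forbidden sets cannot partition $A$, using slack at step $t-1$ and reordering of leaves) is not carried out and is not obviously true as stated; you yourself call it ``the crux.'' Your closing remark that for $n=2$, $t=2$ the two new edges are ``forced to cover all of $A$'' is false: both may be labeled $c$.

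The paper's proof is the same greedy leaf-by-leaf extension but with the correct accounting, which removes the tight case entirely. The first added leaf is forced to give edge label $a_0=c$ (unique choice), after which all edge labels sit at frequency $k$. Thereafter, when the current tree has $nk+j$ vertices it has $nk+j-1$ edges, so only $j-1$ edge labels are at frequency $k+1$ and must be avoided --- not $j$ as in your count --- alongside the $j$ overused vertex labels. This forbids at most $2j-1$ elements of $A$, and $2j-1<n$ holds for every $j\le\floor*{\frac{n}{2}}$, so the greedy choice always exists and the induction closes without any special terminal case. If you replace ``the $t$ new edge labels are distinct'' with ``the new edge label avoids the labels currently at frequency $k+1$,'' your argument becomes the paper's.
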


\begin{proof}
    This proof proceeds similarly to the proof of \cite[Lemma 3]{hovey}; see
    also \cite[Lemma 2.1]{patriasPechenik}.
    Now, as the labeling of $T$ is cordial, each element in $A$ appears as a vertex label exactly $k$ times.
    Since there are $nk - 1$ edges, all but one element of $A$ appear as an edge label exactly $k$ times, while the remaining element, which we call $a_0$, appears as an edge label $k-1$ times.

    In general, if a single leaf is attached to $T$, then there is a unique vertex
    label that makes the extension cordial. Namely, the vertex label that forces the induced edge label to be $a_0$. At this point, the edge labels will be equally distributed, but one vertex label is used more than the others.

    We may now proceed inductively. If we attach a leaf to an $A$-cordial tree with $nk+j$ vertices, $1 \leq j < n$, there are $j$ vertex labels that must be avoided
    in order to preserve vertex-cordiality, leaving $n-j$ viable choices for the vertex label of the added leaf.
    To preserve edge-cordiality, there are $j - 1$ sums to be avoided for the vertex label of the added leaf.
    Both are always achievable when $n-j > j-1$. So when $j < \frac{n + 1}{2}$, the added leaf can be labeled to preserve cordiality. This will give the desired result.
\end{proof}

We focus on the Klein four-group, $\mathbb{Z}_2^2=\mathbb{Z}_2\times \mathbb{Z}_2$. We will write this group as  \[ \mathbb{Z}_2^2=\{0, a,b,c\},\]  with identity $0$, $a+a=b+b=c+c = 0$, $a+b=c$, $a+c=b,$ and $b+c=a.$
With this in mind, we restate more specific forms of Lemmas~\ref{lemma: other cordial labelings} and \ref{lemma: extension of trees}. The first follows from the observation that any bijection from $\mathbb{Z}_2^2$ to $\mathbb{Z}_2^2$ that maps $0$ to 0 must be an automorphism.

\begin{lemma} \label{lemma: bijection on V}
    If $\ell$ is a $\mathbb{Z}_2^2$-cordial labeling of $G$, then so is $\varphi\circ \ell$ for any bijection $\varphi: \mathbb{Z}_2^2 \to \mathbb{Z}_2^2$ with $\varphi(0) = 0.$
\end{lemma}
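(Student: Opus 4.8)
The plan is to reduce the statement to the automorphism claim already recorded in Lemma~\ref{lemma: other cordial labelings}(a). That lemma guarantees that $\varphi \circ \ell$ is $\mathbb{Z}_2^2$-cordial whenever $\varphi \in \operatorname{Aut}(\mathbb{Z}_2^2)$, so the entire content here is to verify the parenthetical remark preceding the statement: that \emph{every} bijection $\varphi$ of $\mathbb{Z}_2^2$ with $\varphi(0) = 0$ is in fact an automorphism. Once that is established, the lemma follows immediately.

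The quickest route I would take is a counting argument. The bijections of $\mathbb{Z}_2^2$ fixing $0$ are precisely the permutations of the three nonzero elements $\{a, b, c\}$, of which there are $3! = 6$. On the other hand, $\operatorname{Aut}(\mathbb{Z}_2^2) \cong GL_2(\mathbb{F}_2)$ also has order $6$, and every automorphism fixes $0$ and permutes $\{a, b, c\}$, hence lies inside the set of such bijections. Since both collections are finite, have the same cardinality, and one contains the other, they coincide. Thus every bijection fixing $0$ is an automorphism.

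A more self-contained alternative, avoiding the computation of $|\operatorname{Aut}(\mathbb{Z}_2^2)|$, is to verify the homomorphism property by hand. Let $\varphi$ be a bijection with $\varphi(0) = 0$, so $\varphi$ restricts to a permutation of $\{a, b, c\}$. The defining structural feature of $\mathbb{Z}_2^2$ is that the sum of any two distinct nonzero elements is the unique remaining nonzero element (so $a+b=c$, $b+c=a$, $a+c=b$), together with $x + x = 0$ for all $x$. For distinct nonzero $x, y$, the element $\varphi(x) + \varphi(y)$ is the third nonzero element distinct from both $\varphi(x)$ and $\varphi(y)$, which equals $\varphi(x + y)$ because $\varphi$ permutes $\{a, b, c\}$. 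The remaining cases are trivial: $\varphi(x + x) = \varphi(0) = 0 = \varphi(x) + \varphi(x)$, and $\varphi(0 + y) = \varphi(y) = \varphi(0) + \varphi(y)$. Hence $\varphi$ is additive.

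I anticipate no genuine obstacle; the only point requiring a moment's care is confirming, in the direct approach, that an arbitrary permutation of $\{a, b, c\}$ automatically respects the relation ``sum of two distinct nonzero elements equals the third.'' This is immediate once one observes that both addition in $\mathbb{Z}_2^2$ and any permutation of $\{a, b, c\}$ are governed by the same combinatorial rule of selecting the remaining element, so the two operations commute. With $\varphi \in \operatorname{Aut}(\mathbb{Z}_2^2)$ in hand, Lemma~\ref{lemma: other cordial labelings}(a) finishes the argument.
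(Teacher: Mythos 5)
Your proposal is correct and matches the paper's approach exactly: the paper likewise derives this lemma from Lemma~\ref{lemma: other cordial labelings}(a) together with the observation that any bijection of $\mathbb{Z}_2^2$ fixing $0$ is an automorphism (stated there without proof). Your verification of that observation --- by either the counting argument or the direct check that a permutation of $\{a,b,c\}$ respects ``sum of two distinct nonzero elements is the third'' --- is sound and simply fills in the detail the paper leaves to the reader.
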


\begin{lemma}\label{extending 4k to 4k+3}
    Let $T$ be a $\mathbb{Z}_2^2$-cordial tree with $4k$ vertices for some $k\in \mathbb{N}$ and let $T^*$ be an extension of $T$ such that $|V(T^*)|\leq 4k+3.$ Then $T^*$ is $\mathbb{Z}_2^2$-cordial.
\end{lemma}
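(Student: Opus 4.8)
The plan is to obtain this statement as the direct specialization of Lemma~\ref{lemma: extension of trees} to the case $A = \mathbb{Z}_2^2$. First I would set $n = |A| = 4$ and check that the hypotheses line up exactly: $T$ is an $A$-cordial tree on $4k = nk$ vertices, and the bound $|V(T^*)| \leq 4k+3$ is precisely $|V(T^*)| \leq nk + \floor*{n/2} + 1$, since $\floor*{4/2} + 1 = 3$. The conclusion of Lemma~\ref{lemma: extension of trees} then gives that $T^*$ is $\mathbb{Z}_2^2$-cordial, and nothing further is needed. This is, after all, exactly the ``more specific form'' of the general lemma that the surrounding text announces.

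If instead a self-contained argument were preferred, I would replay the inductive leaf-attaching argument from Lemma~\ref{lemma: extension of trees} in the concrete group $\{0,a,b,c\}$. Since $T$ is cordial on $4k$ vertices, each of $0,a,b,c$ occurs exactly $k$ times as a vertex label, while among the $4k-1$ edges three labels occur $k$ times and a distinguished label $a_0$ occurs $k-1$ times. Attaching the first leaf, I would choose its vertex label to force the new edge label to equal $a_0$, thereby balancing all four edge labels at $k$. Then, attaching leaves at the stages $j=1$ and $j=2$ (trees on $4k+1$ and $4k+2$ vertices), I would track that there are $j$ over-represented vertex labels to avoid—leaving $4-j$ viable labels—and $j-1$ edge sums to avoid; since the fixed neighbor forces distinct candidate labels to produce distinct edge sums, a valid choice survives exactly when $4 - j > j - 1$. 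This inequality holds for $j=1,2$ but fails at $j=3$, which is precisely why the construction reaches $4k+3$ and stops there.

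There is essentially no hard step: the only real content is the arithmetic identity $\floor*{4/2}+1 = 3$ together with the observation that the earlier lemma applies verbatim to the finite abelian group $\mathbb{Z}_2^2$. The one point genuinely worth making explicit—though it is already subsumed in Lemma~\ref{lemma: extension of trees}—is that any extension $T^*$ of the tree $T$ can be built up from $T$ by attaching one leaf at a time, so that each intermediate stage is again a tree containing $T$ as an induced subgraph. This follows by rooting $T^*$ at a vertex of $T$ and adjoining the vertices of $V'(T^*)$ in order of increasing depth, each new vertex having its (already-present) parent as its unique neighbor in the current graph.
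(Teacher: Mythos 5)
Your proposal is correct and matches the paper exactly: the paper offers no separate proof of this lemma, presenting it precisely as the specialization of Lemma~\ref{lemma: extension of trees} to $n=|\mathbb{Z}_2^2|=4$, where $nk+\floor*{n/2}+1=4k+3$. Your optional self-contained replay of the leaf-attaching induction also mirrors the paper's proof of that general lemma, so nothing further is needed.
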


\section{$\mathbb{Z}_2^2$-Cordiality for Trees with $4k$ Vertices} \label{section: tree cord procedure}

In this section, we prove the following.

\begin{theorem}\label{extension of 4k vertices to 4k+4 verts}
    Let $T$ be a tree with $4k$ vertices for some $k\in \mathbb{N}.$ If $T$ is not $P_4$, then $T$ is $\mathbb{Z}_2^2$-cordial.
\end{theorem}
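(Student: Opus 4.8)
The plan is to induct on $k$. For the base case $k=1$, the only trees on four vertices are $P_4$ and $K_{1,3}$: labeling the center of $K_{1,3}$ by $0$ and its three leaves by $a,b,c$ gives vertex labels $\{0,a,b,c\}$ and edge labels $\{a,b,c\}$, which is cordial, while $P_4$ is the excluded case. For the inductive step I would take a tree $T$ on $4k$ vertices with $k\geq 2$, strip off a suitable pendant piece $S$ on four vertices so that $T'=T\setminus S$ is a subtree on $4(k-1)$ vertices, cordially label $T'$ by the inductive hypothesis (legitimate as soon as $T'\neq P_4$, which can fail only on eight-vertex trees, i.e.\ at the first step), and then extend the labeling across $S$.

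The heart of the matter is a local count. A cordial labeling of $T'$ uses each vertex label exactly $k-1$ times and, since $T'$ has $4(k-1)-1=4(k-2)+3$ edges, uses three edge labels $k-1$ times and one edge label $k-2$ times. Comparing with the corresponding counts for $T$ (each vertex label $k$ times; three edge labels $k$ times and one $k-1$ times) forces the four vertices of $S$ to receive all four labels of $\mathbb{Z}_2^2$ exactly once, and the four new edges (the internal edges of $S$ together with the single edge joining $S$ to its attachment vertex $p\in T'$) to carry all four labels exactly once. By Lemma~\ref{lemma: bijection on V} I may post-compose the cordial labeling of $T'$ with any bijection of $\mathbb{Z}_2^2$ fixing $0$, so it is enough to verify liftability in the two cases $\ell(p)=0$ and $\ell(p)=a$.

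I would then verify liftability gadget by gadget, working at the parent $u$ of a deepest leaf, all of whose children are leaves. If $u$ has at least four leaf-children, strip four of them: the new edges are $\ell(p)+\ell(\cdot)$ as $\ell(\cdot)$ ranges over $\mathbb{Z}_2^2$, hence run over all of $\mathbb{Z}_2^2$. If $u$ has exactly three leaf-children, strip the pendant $K_{1,3}$ with center $u$; for $\ell(p)=0$ label $u$ by $0$ and the leaves by $a,b,c$ (edges $0,a,b,c$), and for $\ell(p)=a$ label $u$ by $a$ and the leaves by $0,b,c$ (edges $0,a,c,b$). A pendant $K_{1,3}$ attached along one of its leaves succeeds by the same kind of check, again reducing to the two normalized values of $\ell(p)$.

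I expect the main obstacle to be precisely the trees in which every available four-vertex pendant is a path: here $u$ has only one or two leaf-children and the branches near it are thin. A direct computation shows such a pendant can never be lifted. Attaching a path $p-x_1-x_2-x_3-x_4$, the four new edge labels sum to $\ell(p)+\ell(x_4)$, forcing $\ell(x_4)=\ell(p)$, after which every assignment of $0,a,b,c$ to $x_1,x_2,x_3,x_4$ repeats an edge label; attaching at an interior vertex makes two neighbors of the attachment share a label and again repeats an edge. This is exactly the local reason $P_4$ fails. I would dispatch these remaining trees by hand: for the paths $P_{4k}$ an explicit non-periodic labeling works (for instance $0,a,a,c,b,b,0,c$ is $\mathbb{Z}_2^2$-cordial on $P_8$, and similar blocks extend it), while for the other thin-branch trees I would strip a four-vertex piece spread over two nearby attachment points, where the additional freedom restores liftability. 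The delicate bookkeeping is to arrange all of this so that the stripped subtree is never $P_4$, an issue only for the finitely many eight-vertex trees, which can be settled directly.
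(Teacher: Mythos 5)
Your overall skeleton---induction in steps of four vertices, the counting argument showing the four stripped vertices must receive all four labels, normalization via Lemma~\ref{lemma: bijection on V} and label shifting, and gadget-by-gadget verification of the lift---matches the paper's strategy. The genuine gap is exactly where you say ``I expect the main obstacle to be.'' Anchoring the decomposition at the parent $u$ of a deepest leaf leaves unhandled every tree in which $u$ has only one or two leaf-children, and this class is far larger than the paths $P_{4k}$: it includes caterpillars, spiders with long legs, and so on. For all of these you assert only that one ``would strip a four-vertex piece spread over two nearby attachment points, where the additional freedom restores liftability,'' without specifying the piece or verifying a single lift. That unverified sentence is the entire content of the paper's Procedures~\ref{proc I} and~\ref{proc II}: instead of removing a connected pendant near a deepest leaf, the paper removes four leaves of $T^*$ \emph{wherever they lie} whenever $T^*$ has at least four leaves (up to four distinct attachment vertices, five label patterns to check after normalization), removes the three leaves plus an adjacent degree-two vertex when $T^*$ has exactly three leaves (four more patterns), and falls back on the Pechenik--Wise path result when $T^*$ has exactly two leaves. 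This trichotomy by leaf count is what makes the case analysis complete; your decomposition does not close. The finite verification at $8$ vertices (needed because $T'$ could be $P_4$) is likewise acknowledged but not carried out---the paper checks all $23$ such trees.

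Two of your supporting claims are also incorrect, though not fatally so. The counting argument does \emph{not} force the four new edges to carry all four labels once each: the distribution in which the previously deficient edge label occurs twice and some other label is omitted also produces a cordial labeling of $T$. Consequently, your assertion that a pendant $P_4$ ``can never be lifted'' is false: with $\ell(p)=0$ and deficient edge label $b$, labeling the pendant path outward by $0,b,c,a$ gives new edges $0,b,a,b$, which is cordial. (What is true is that the lift fails for \emph{some} positions of the deficient label, e.g.\ when it equals $\ell(p)$, so the pendant-$P_4$ decomposition is still unusable in general---but the argument you give for this is not valid as stated.)
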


Before giving the proof, we introduce two procedures.

\begin{procedure}\label{proc I}
    Let $T$ be a tree with $4k$ vertices for some $k\in \mathbb{N}$ that is labeled in a $\mathbb{Z}_2^2$-cordial manner, and let $T^*$ be an extension of $T$ with $4k+4$ vertices in such a way that each vertex in $V'(T^*)$ is connected to a vertex in $V(T)$; see Figure \ref{fig: 4 leafs}, where the bottom vertices are part of $T$ and may coincide. This procedure
    constructs a $\mathbb{Z}_2^2$-cordial labeling on $T^*$.
      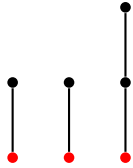
\begin{figure}[H]
        \begin{center}
          \begin{tikzpicture}
    \node[filled, red, minimum width = 4pt] (1) at (0,0) {};
    \node[filled, red, minimum width = 4pt] (3) at (0.75,0) {};
    \node[filled, red, minimum width = 4pt] (5) at (1.5,0) {};
    \node[filled, red, minimum width = 4pt] (7) at (2.25,0) {};
    \node[filled, minimum width = 4pt] (2) at (0,1) {};
    \node[filled, minimum width = 4pt] (4) at (0.75,1) {};
    \node[filled, minimum width = 4pt] (6) at (1.5,1) {};
    \node[filled, minimum width = 4pt] (8) at (2.25,1) {};
    
    \draw[black, thick] (1) -- (2);
    \draw[black, thick] (3) -- (4);
    \draw[black, thick] (5) -- (6);
    \draw[black, thick] (7) -- (8);
\end{tikzpicture}
        \end{center}
        \caption{Four added leaves of $T^*$, Procedure \ref{proc I}}
        \label{fig: 4 leafs}
     \end{figure}

    Notice that every $l\in \mathbb{Z}_2^2$ appears $k$ times as a vertex label of $T$. As there are $4k - 1$ edges, let $e\in \mathbb{Z}_2^2$ denote the unique label that is used only $k-1$ times as an edge label. All the remaining $l\in \mathbb{Z}_2^2$, $l\neq e$, appear exactly $k$ times as edge labels. If $|V(T)| = 0$ we choose $e=0$.

    Then there are $4$ edges in $E'(T^*)$ and each has one vertex in $V'(T^*)$ and one vertex in $V(T)$. These four vertices in $T$ can have the following labeling patterns: all vertices share the same label, three vertices share the same label and one vertex is labeled differently, two vertices share a label and the other two vertices share some other label, two vertices share the same label and the other two vertices have distinct labels, or each vertex is labeled distinctly.

    Thanks to  Lemmas \ref{lemma: other cordial labelings} and \ref{lemma: bijection on V}, we can relabel the vertices of $T$ in such a way that we have one of the cases found in Figure \ref{fig:labelings of 4 lines}.
        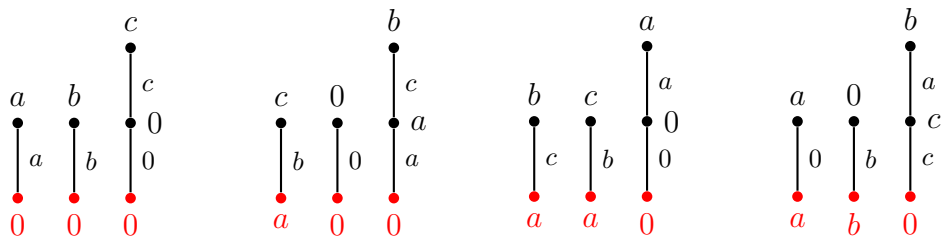
\begin{figure}[H]
        \begin{center}
              \begin{tikzpicture}
    \node[filled, red, minimum width = 4pt, label=below:\textcolor{red}{0}] (1) at (0,0) {};
    \node[filled, red, minimum width = 4pt, label=below:\textcolor{red}{0}] (3) at (1,0) {};
    \node[filled, red, minimum width = 4pt, label=below:\textcolor{red}{0}] (5) at (2,0) {};
    \node[filled, red, minimum width = 4pt, label=below:\textcolor{red}{0}] (7) at (3,0) {};
    \node[filled, minimum width = 4pt, label=above:0] (2) at (0,1) {};
    \node[filled, minimum width = 4pt, label=above:$a$] (4) at (1,1) {};
    \node[filled, minimum width = 4pt, label=above:$b$] (6) at (2,1) {};
    \node[filled, minimum width = 4pt, label=above:$c$] (8) at (3,1) {};
    
    \draw[black, thick] (1) -- (2) node [midway, right] {0};
    \draw[black, thick] (3) -- (4) node [midway, right] {$a$};
    \draw[black, thick] (5) -- (6) node [midway, right] {$b$};
    \draw[black, thick] (7) -- (8)node [midway, right] {$c$};

    \node[filled, red, minimum width = 4pt, label=below:\textcolor{red}{0}] (1) at (5,0) {};
    \node[filled, red, minimum width = 4pt, label=below:\textcolor{red}{0}] (3) at (6,0) {};
    \node[filled, red, minimum width = 4pt, label=below:\textcolor{red}{0}] (5) at (7,0) {};
    \node[filled, red, minimum width = 4pt, label=below:\textcolor{red}{$a$}] (7) at (8,0) {};
    \node[filled, minimum width = 4pt, label=above:$e$] (2) at (5,1) {};
    \node[filled, minimum width = 4pt, label=above:$b+e$] (4) at (6,1) {};
    \node[filled, minimum width = 4pt, label=above:$c+e$] (6) at (7,1) {};
    \node[filled, minimum width = 4pt, label=above:$a+e$] (8) at (8,1) {};
    
    \draw[black, thick] (1) -- (2) node [midway, right] {$e$};
    \draw[black, thick] (3) -- (4) node [midway, right] {{\footnotesize $b+e$}};
    \draw[black, thick] (5) -- (6) node [midway, right] {{\footnotesize $c+e$}};
    \draw[black, thick] (7) -- (8)node [midway, right] {$e$};

\end{tikzpicture}
          \begin{tikzpicture}

    \node[filled, red, minimum width = 4pt, label=below:\textcolor{red}{0}] (1) at (0,0) {};
    \node[filled, red, minimum width = 4pt, label=below:\textcolor{red}{0}] (3) at (1,0) {};
    \node[filled, red, minimum width = 4pt, label=below:\textcolor{red}{$a$}] (5) at (2,0) {};
    \node[filled, red, minimum width = 4pt, label=below:\textcolor{red}{$a$}] (7) at (3,0) {};
    \node[filled, minimum width = 4pt, label=above:$0$] (2) at (0,1) {};
    \node[filled, minimum width = 4pt, label=above:$a$] (4) at (1,1) {};
    \node[filled, minimum width = 4pt, label=above:$c$] (6) at (2,1) {};
    \node[filled, minimum width = 4pt, label=above:$b$] (8) at (3,1) {};
    
    \draw[black, thick] (1) -- (2) node [midway, right] {$0$};
    \draw[black, thick] (3) -- (4) node [midway, right] {$a$};
    \draw[black, thick] (5) -- (6) node [midway, right] {$b$};
    \draw[black, thick] (7) -- (8)node [midway, right] {$c$};

    \node[filled, red, minimum width = 4pt, label=below:\textcolor{red}{0}] (9) at (5,0) {};
    \node[filled, red, minimum width = 4pt, label=below:\textcolor{red}{0}] (11) at (6,0) {};
    \node[filled, red, minimum width = 4pt, label=below:\textcolor{red}{$a$}] (13) at (7,0) {};
    \node[filled, red, minimum width = 4pt, label=below:\textcolor{red}{$b$}] (15) at (8,0) {};
    \node[filled, minimum width = 4pt, label=above:$e$] (10) at (5,1) {};
    \node[filled, minimum width = 4pt, label=above:$b+e$] (12) at (6,1) {};
    \node[filled, minimum width = 4pt, label=above:$a+e$] (14) at (7,1) {};
    \node[filled, minimum width = 4pt, label=above:$c+e$] (16) at (8,1) {};
    
    \draw[black, thick] (9) -- (10) node [midway, right] {$e$};
    \draw[black, thick] (11) -- (12) node [midway, right] {{\footnotesize $b+e$}};
    \draw[black, thick] (13) -- (14) node [midway, right] {$e$};
    \draw[black, thick] (15) -- (16)node [midway, right] {{\footnotesize $a+e$}};

\end{tikzpicture}
          \begin{tikzpicture}
        \node[filled, red, minimum width = 4pt, label=below:\textcolor{red}{0}] (1) at (0,0) {};
    \node[filled, red, minimum width = 4pt, label=below:\textcolor{red}{$a$}] (3) at (1,0) {};
    \node[filled, red, minimum width = 4pt, label=below:\textcolor{red}{$b$}] (5) at (2,0) {};
    \node[filled, red, minimum width = 4pt, label=below:\textcolor{red}{$c$}] (7) at (3,0) {};
    \node[filled, minimum width = 4pt, label=above:$a$] (2) at (0,1) {};
    \node[filled, minimum width = 4pt, label=above:$c$] (4) at (1,1) {};
    \node[filled, minimum width = 4pt, label=above:$b$] (6) at (2,1) {};
    \node[filled, minimum width = 4pt, label=above:$0$] (8) at (3,1) {};
    
    \draw[black, thick] (1) -- (2) node [midway, right] {$a$};
    \draw[black, thick] (3) -- (4) node [midway, right] {$b$};
    \draw[black, thick] (5) -- (6) node [midway, right] {$0$};
    \draw[black, thick] (7) -- (8)node [midway, right] {$c$};
\end{tikzpicture}
        \end{center}
        \caption{Cordial leaves in Procedure \ref{proc I}}
        \label{fig:labelings of 4 lines}
     \end{figure}
    The indicated labelings ensure that $T^*$ is $\mathbb{Z}_2^2$-cordial.
\end{procedure}

\begin{procedure}\label{proc II}
    Let $T$ be a tree with $4k$ vertices for some $k\in \mathbb{N}$ that is labeled in a $\mathbb{Z}_2^2$-cordial manner, and let $T^*$ be an extension of $T$ with $4k+4$ vertices in such a way that there is one vertex in $V'(T^*)$ that is not connected to a vertex in $V(T)$; see Figure \ref{fig:unlabeled three lines}, where the bottom vertices are part of $T$ and need not be distinct.
    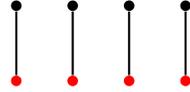
\begin{figure}[H]
        \begin{center}
          \begin{tikzpicture}
    \node[filled, red, minimum width = 4pt] (1) at (0,0) {};
    \node[filled, red, minimum width = 4pt] (3) at (0.75,0) {};
    \node[filled, red, minimum width = 4pt] (5) at (1.5,0) {};
    \node[filled, minimum width = 4pt] (2) at (0,1) {};
    \node[filled, minimum width = 4pt] (4) at (0.75,1) {};
    \node[filled, minimum width = 4pt] (6) at (1.5,1) {};
    \node[filled, minimum width = 4pt] (7) at (1.5,2) {};
    
    \draw[black, thick] (1) -- (2);
    \draw[black, thick] (3) -- (4);
    \draw[black, thick] (5) -- (6);
    \draw[black, thick] (6) -- (7);
\end{tikzpicture}
        \end{center}
        \caption{Four added leaves of $T^*$, Procedure \ref{proc II} }
        \label{fig:unlabeled three lines}
     \end{figure}

    As a result, $E'(T^*)$ contains four edges, and those edges are connected to exactly three vertices from $V(T)$. Again, thanks to Lemmas~\ref{lemma: other cordial labelings} and \ref{lemma: bijection on V}, there are only four cases to consider. Each of these cases is pictured in Figure \ref{fig:three lines with labels}.
    Moreover, the indicated labelings show that $T^*$ is $\mathbb{Z}_2^2$-cordial.
    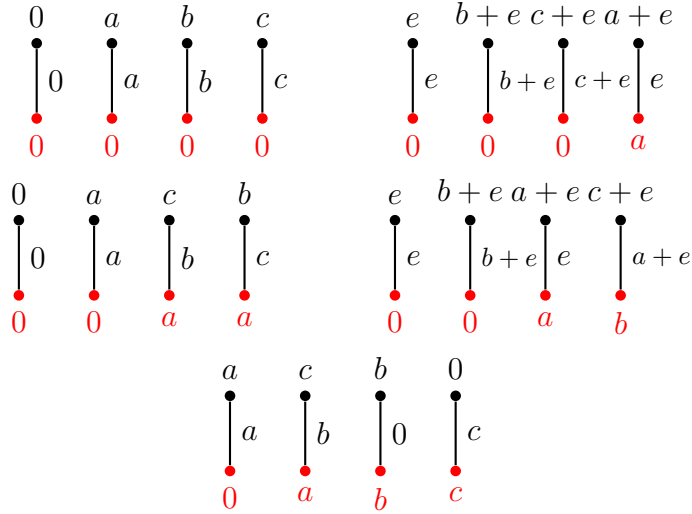
\begin{figure}[H]
        \begin{center}
          \begin{tikzpicture}
    \node[filled, red, minimum width = 4pt, label=below:\textcolor{red}{0}] (1) at (0,0) {};
    \node[filled, red, minimum width = 4pt, label=below:\textcolor{red}{$0$}] (3) at (0.75,0) {};
    \node[filled, red, minimum width = 4pt, label=below:\textcolor{red}{$0$}] (5) at (1.5,0) {};
    \node[filled, minimum width = 4pt, label=above:$a$] (2) at (0,1) {};
    \node[filled, minimum width = 4pt, label=above:$b$] (4) at (0.75,1) {};
    \node[filled, minimum width = 4pt, label=right:$0$] (6) at (1.5,1) {};
    \node[filled, minimum width = 4pt, label=above:$c$] (7) at (1.5,2) {};
    
    \draw[black, thick] (1) -- (2) node [midway, right] {{\footnotesize $a$}};
    \draw[black, thick] (3) -- (4) node [midway, right] {{\footnotesize $b$}};
    \draw[black, thick] (5) -- (6) node [midway, right] {{\footnotesize $0$}};
    \draw[black, thick] (6) -- (7)node [midway, right] {{\footnotesize $c$}};

    \node[filled, red, minimum width = 4pt, label=below:\textcolor{red}{$a$}] (1) at (3.5,0) {};
    \node[filled, red, minimum width = 4pt, label=below:\textcolor{red}{$0$}] (3) at (4.25,0) {};
    \node[filled, red, minimum width = 4pt, label=below:\textcolor{red}{$0$}] (5) at (5,0) {};
    \node[filled, minimum width = 4pt, label=above:$c$] (2) at (3.5,1) {};
    \node[filled, minimum width = 4pt, label=above:$0$] (4) at (4.25,1) {};
    \node[filled, minimum width = 4pt, label=right:$a$] (6) at (5,1) {};
    \node[filled, minimum width = 4pt, label=above:$b$] (7) at (5,2) {};
    
    \draw[black, thick] (1) -- (2) node [midway, right] {{\footnotesize $b$}};
    \draw[black, thick] (3) -- (4) node [midway, right] {{\footnotesize $0$}};
    \draw[black, thick] (5) -- (6) node [midway, right] {{\footnotesize $a$}};
    \draw[black, thick] (6) -- (7)node [midway, right] {{\footnotesize $c$}};
\end{tikzpicture}
          \hfill
          \begin{tikzpicture}
    \node[filled, red, minimum width = 4pt, label=below:\textcolor{red}{$a$}] (1) at (0,0) {};
    \node[filled, red, minimum width = 4pt, label=below:\textcolor{red}{$a$}] (3) at (.75,0) {};
    \node[filled, red, minimum width = 4pt, label=below:\textcolor{red}{$0$}] (5) at (1.5,0) {};
    \node[filled, minimum width = 4pt, label=above:$b$] (2) at (0,1) {};
    \node[filled, minimum width = 4pt, label=above:$c$] (4) at (0.75,1) {};
    \node[filled, minimum width = 4pt, label=right:$0$] (6) at (1.5,1) {};
    \node[filled, minimum width = 4pt, label=above:$a$] (7) at (1.5,2) {};
    
    \draw[black, thick] (1) -- (2) node [midway, right] {{\footnotesize $c$}};
    \draw[black, thick] (3) -- (4) node [midway, right] {{\footnotesize $b$}};
    \draw[black, thick] (5) -- (6) node [midway, right] {{\footnotesize $0$}};
    \draw[black, thick] (6) -- (7)node [midway, right] {{\footnotesize $a$}};

    \node[filled, red, minimum width = 4pt, label=below:\textcolor{red}{$a$}] (1) at (3.5,0) {};
    \node[filled, red, minimum width = 4pt, label=below:\textcolor{red}{$b$}] (3) at (4.25,0) {};
    \node[filled, red, minimum width = 4pt, label=below:\textcolor{red}{$0$}] (5) at (5,0) {};
    \node[filled, minimum width = 4pt, label=above:$a$] (2) at (3.5,1) {};
    \node[filled, minimum width = 4pt, label=above:$0$] (4) at (4.25,1) {};
    \node[filled, minimum width = 4pt, label=right:$c$] (6) at (5,1) {};
    \node[filled, minimum width = 4pt, label=above:$b$] (7) at (5,2) {};
    
    \draw[black, thick] (1) -- (2) node [midway, right] {{\footnotesize $0$}};
    \draw[black, thick] (3) -- (4) node [midway, right] {{\footnotesize $b$}};
    \draw[black, thick] (5) -- (6) node [midway, right] {{\footnotesize $c$}};
    \draw[black, thick] (6) -- (7)node [midway, right] {{\footnotesize $a$}};
\end{tikzpicture}
        \end{center}
        \caption{Cordial leaves in Procedure \ref{proc II}}
        \label{fig:three lines with labels}
     \end{figure}

\end{procedure}

We are now ready to prove Theorem \ref{extension of 4k vertices to 4k+4 verts}.
\begin{proof}
    We proceed by induction on $k$. We consider three base cases. If $k=0$, then the only tree with $4k$ vertices is the empty tree which is vacuously $\mathbb{Z}_2^2$-cordial.

    If $k=1$, then there are exactly two trees with $4k$ vertices, namely $P_4$ and $S_3$. $P_4$ is not cordial as shown in \cite[Theorem 3.4]{pechenikWise}. $S_3$ admits a $\mathbb{Z}_2^2$-cordial labeling as seen here:
    \begin{figure}[H]
        \begin{center}
          \begin{tikzpicture}
    \node[filled, minimum width = 4pt, label=north west:$0$] (1) at (0,0) {};
    \node[filled, minimum width = 4pt,label=below:$b$] (2) at (-.866,-0.5) {};
    \node[filled, minimum width = 4pt, label=above:$a$] (3) at (0,1) {};
    \node[filled, minimum width = 4pt, label=below:$c$] (4) at (.866,-.5) {};
    \node[filled, minimum width = 4pt] (5) at (4,0) {};
    \node[filled, minimum width = 4pt] (6) at (5,0) {};
    \node[filled, minimum width = 4pt] (7) at (6,0) {};
    \node[filled, minimum width = 4pt] (8) at (7,0) {};
    
    \draw[black, thick] (1) -- (2) node [midway, right] {{\footnotesize $b$}};
    \draw[black, thick] (1) -- (3) node [midway, right] {{\footnotesize $a$}};
    \draw[black, thick] (1) -- (4) node [midway, right] {{\footnotesize $c$}};
    \draw[black, thick] (5) -- (6);
    \draw[black, thick] (6) -- (7);
    \draw[black, thick] (7) -- (8);
\end{tikzpicture}
        \end{center}
        \caption{$S_3$ and $P_4$}
        \label{fig:S3 and P4}
     \end{figure}
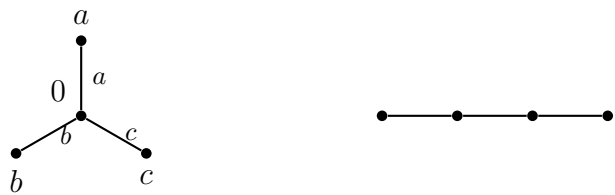

    If $k=2$, then there are $23$ trees with $4k$ vertices, see \cite[p.233]{harary}. The $10$ trees in Figure \ref{fig:Ten Trees} result from applying Procedure \ref{proc I} to $S_3$ and thus admit a $\mathbb{Z}_2^2$-cordial labeling.
    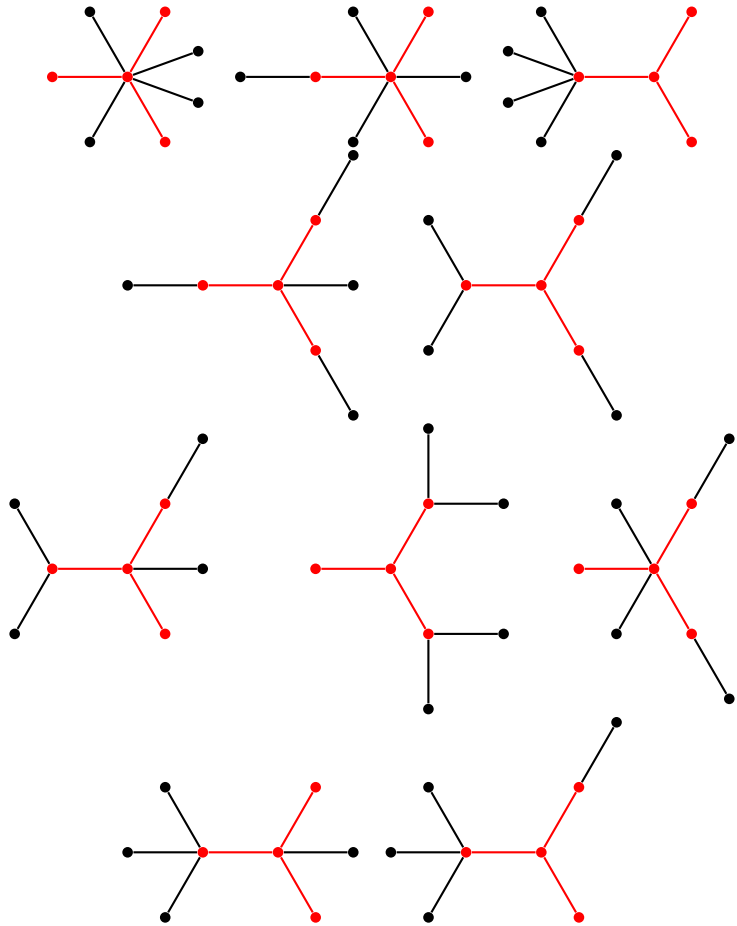
\begin{figure}[H]
        \begin{center}
          \begin{tikzpicture}

    \node[filled, red, minimum width = 4pt] (1) at (0-3.5,0) {};
    \node[filled, red, minimum width = 4pt] (2) at (-1-3.5,0) {};
    \node[filled, red, minimum width = 4pt] (3) at (.5-3.5,.866) {};
    \node[filled, red, minimum width = 4pt] (4) at (.5-3.5,-.866) {};
    \node[filled, minimum width = 4pt] (5) at (-.5-3.5,-.866) {};
    \node[filled, minimum width = 4pt] (6) at (-.5-3.5,.866) {};
    \node[filled, minimum width = 4pt] (7) at (.94-3.5,+.342) {};
    \node[filled, minimum width = 4pt] (8) at (.94-3.5,-.342) {};
    \draw[red, thick] (1) -- (2);
    \draw[red, thick] (1) -- (3);
    \draw[red, thick] (1) -- (4);
    \draw[black, thick] (1) -- (5);
    \draw[black, thick] (1) -- (6);
    \draw[black, thick] (1) -- (7);
    \draw[black, thick] (1) -- (8);

    \node[filled, red, minimum width = 4pt] (11) at (0,0) {};
    \node[filled, red, minimum width = 4pt] (12) at (-1,0) {};
    \node[filled, red, minimum width = 4pt] (13) at (.5,.866) {};
    \node[filled, red, minimum width = 4pt] (14) at (.5,-.866) {};
    \node[filled, minimum width = 4pt] (15) at (-.5,-.866) {};
    \node[filled, minimum width = 4pt] (16) at (-.5,.866) {};
    \node[filled, minimum width = 4pt] (17) at (1,0) {};
    \node[filled, minimum width = 4pt] (18) at (-2,0) {};
    \draw[red, thick] (11) -- (12);
    \draw[red, thick] (11) -- (13);
    \draw[red, thick] (11) -- (14);
    \draw[black, thick] (11) -- (15);
    \draw[black, thick] (11) -- (16);
    \draw[black, thick] (11) -- (17);
    \draw[black, thick] (12) -- (18);
    
    \node[filled, red, minimum width = 4pt] (111) at (0+3.5,0) {};
    \node[filled, red, minimum width = 4pt] (112) at (-1+3.5,0) {};
    \node[filled, red, minimum width = 4pt] (113) at (.5+3.5,.866) {};
    \node[filled, red, minimum width = 4pt] (114) at (.5+3.5,-.866) {};
    \node[filled, minimum width = 4pt] (115) at (-1.5+3.5,-.866) {};
    \node[filled, minimum width = 4pt] (116) at (-1.5+3.5,.866) {};
    \node[filled, minimum width = 4pt] (117) at (-1.94+3.5,.342) {};
    \node[filled, minimum width = 4pt] (118) at (-1.94+3.5,-.342) {};
    \draw[red, thick] (111) -- (112);
    \draw[red, thick] (111) -- (113);
    \draw[red, thick] (111) -- (114);
    \draw[black, thick] (112) -- (115);
    \draw[black, thick] (112) -- (116);
    \draw[black, thick] (112) -- (117);
    \draw[black, thick] (112) -- (118);

\end{tikzpicture}
          \begin{tikzpicture}
    \node[filled, red, minimum width = 4pt] (2221) at (0,0) {};
    \node[filled, red, minimum width = 4pt] (2222) at (-1,0) {};
    \node[filled, red, minimum width = 4pt] (2223) at (.5,.866) {};
    \node[filled, red, minimum width = 4pt] (2224) at (.5,-.866) {};
    \node[filled, minimum width = 4pt] (2225) at (-2,0) {};
    \node[filled, minimum width = 4pt] (2226) at (1,0) {};
    \node[filled, minimum width = 4pt] (2227) at (1,1.73) {};
    \node[filled, minimum width = 4pt] (2228) at (1,-1.73) {};
    \draw[red, thick] (2221) -- (2222);
    \draw[red, thick] (2221) -- (2223);
    \draw[red, thick] (2221) -- (2224);
    \draw[black, thick] (2222) -- (2225);
    \draw[black, thick] (2221) -- (2226);
    \draw[black, thick] (2223) -- (2227);
    \draw[black, thick] (2224) -- (2228);
    
    \node[filled, red, minimum width = 4pt] (22221) at (0+3.5,0) {};
    \node[filled, red, minimum width = 4pt] (22222) at (-1+3.5,0) {};
    \node[filled, red, minimum width = 4pt] (22223) at (.5+3.5,.866) {};
    \node[filled, red, minimum width = 4pt] (22224) at (.5+3.5,-.866) {};
    \node[filled, minimum width = 4pt] (22225) at (-1.5+3.5,.866) {};
    \node[filled, minimum width = 4pt] (22226) at (-1.5+3.5,-.866) {};
    \node[filled, minimum width = 4pt] (22227) at (1+3.5,1.73) {};
    \node[filled, minimum width = 4pt] (22228) at (1+3.5,-1.73) {};
    \draw[red, thick] (22221) -- (22222);
    \draw[red, thick] (22221) -- (22223);
    \draw[red, thick] (22221) -- (22224);
    \draw[black, thick] (22222) -- (22225);
    \draw[black, thick] (22222) -- (22226);
    \draw[black, thick] (22223) -- (22227);
    \draw[black, thick] (22224) -- (22228);
\end{tikzpicture}
          \begin{tikzpicture}
    \node[filled, red, minimum width = 4pt] (21) at (0,0) {};
    \node[filled, red, minimum width = 4pt] (22) at (-1,0) {};
    \node[filled, red, minimum width = 4pt] (23) at (.5,.866) {};
    \node[filled, red, minimum width = 4pt] (24) at (.5,-.866) {};
    \node[filled, minimum width = 4pt] (25) at (.5,1.866) {};
    \node[filled, minimum width = 4pt] (26) at (.5,-1.866) {};
    \node[filled, minimum width = 4pt] (27) at (1.5,.866) {};
    \node[filled, minimum width = 4pt] (28) at (1.5,-.866) {};
    \draw[red, thick] (21) -- (22);
    \draw[red, thick] (21) -- (23);
    \draw[red, thick] (21) -- (24);
    \draw[black, thick] (23) -- (25);
    \draw[black, thick] (24) -- (26);
    \draw[black, thick] (23) -- (27);
    \draw[black, thick] (24) -- (28);

    \node[filled, red, minimum width = 4pt] (221) at (0+3.5,0) {};
    \node[filled, red, minimum width = 4pt] (222) at (-1+3.5,0) {};
    \node[filled, red, minimum width = 4pt] (223) at (.5+3.5,.866) {};
    \node[filled, red, minimum width = 4pt] (224) at (.5+3.5,-.866) {};
    \node[filled, minimum width = 4pt] (225) at (-.5+3.5,-.866) {};
    \node[filled, minimum width = 4pt] (226) at (-.5+3.5,.866) {};
    \node[filled, minimum width = 4pt] (227) at (1+3.5,1.73) {};
    \node[filled, minimum width = 4pt] (228) at (1+3.5,-1.73) {};
    \draw[red, thick] (221) -- (222);
    \draw[red, thick] (221) -- (223);
    \draw[red, thick] (221) -- (224);
    \draw[black, thick] (221) -- (225);
    \draw[black, thick] (221) -- (226);
    \draw[black, thick] (223) -- (227);
    \draw[black, thick] (224) -- (228);

    \node[filled, red, minimum width = 4pt] (21) at (0-3.5,0) {};
    \node[filled, red, minimum width = 4pt] (22) at (-1-3.5,0) {};
    \node[filled, red, minimum width = 4pt] (23) at (.5-3.5,.866) {};
    \node[filled, red, minimum width = 4pt] (24) at (.5-3.5,-.866) {};
    \node[filled, minimum width = 4pt] (25) at (-1.5-3.5,-.866) {};
    \node[filled, minimum width = 4pt] (26) at (-1.5-3.5,.866) {};
    \node[filled, minimum width = 4pt] (27) at (1-3.5,0) {};
    \node[filled, minimum width = 4pt] (28) at (1-3.5,1.73) {};
    \draw[red, thick] (21) -- (22);
    \draw[red, thick] (21) -- (23);
    \draw[red, thick] (21) -- (24);
    \draw[black, thick] (22) -- (25);
    \draw[black, thick] (22) -- (26);
    \draw[black, thick] (21) -- (27);
    \draw[black, thick] (23) -- (28);
\end{tikzpicture}
          \begin{tikzpicture}
    \node[filled, red, minimum width = 4pt] (1111) at (0,0) {};
    \node[filled, red, minimum width = 4pt] (1112) at (-1,0) {};
    \node[filled, red, minimum width = 4pt] (1113) at (.5,.866) {};
    \node[filled, red, minimum width = 4pt] (1114) at (.5,-.866) {};
    \node[filled, minimum width = 4pt] (1115) at (-1.5,-.866) {};
    \node[filled, minimum width = 4pt] (1116) at (-1.5,.866) {};
    \node[filled, minimum width = 4pt] (1117) at (-2,0) {};
    \node[filled, minimum width = 4pt] (1118) at (1,0) {};
    \draw[red, thick] (1111) -- (1112);
    \draw[red, thick] (1111) -- (1113);
    \draw[red, thick] (1111) -- (1114);
    \draw[black, thick] (1112) -- (1115);
    \draw[black, thick] (1112) -- (1116);
    \draw[black, thick] (1112) -- (1117);
    \draw[black, thick] (1111) -- (1118);
    
    \node[filled, red, minimum width = 4pt] (11111) at (0+3.5,0) {};
    \node[filled, red, minimum width = 4pt] (11112) at (-1+3.5,0) {};
    \node[filled, red, minimum width = 4pt] (11113) at (.5+3.5,.866) {};
    \node[filled, red, minimum width = 4pt] (11114) at (.5+3.5,-.866) {};
    \node[filled, minimum width = 4pt] (11115) at (-1.5+3.5,-.866) {};
    \node[filled, minimum width = 4pt] (11116) at (-1.5+3.5,.866) {};
    \node[filled, minimum width = 4pt] (11117) at (-2+3.5,0) {};
    \node[filled, minimum width = 4pt] (11118) at (1+3.5,1.73) {};
    \draw[red, thick] (11111) -- (11112);
    \draw[red, thick] (11111) -- (11113);
    \draw[red, thick] (11111) -- (11114);
    \draw[black, thick] (11112) -- (11115);
    \draw[black, thick] (11112) -- (11116);
    \draw[black, thick] (11112) -- (11117);
    \draw[black, thick] (11113) -- (11118);
    
\end{tikzpicture}
        \end{center}
        \caption{Ten trees from $S_3$ via Procedure \ref{proc I}}
        \label{fig:Ten Trees}
     \end{figure}

    Next, the 5 trees in Figure \ref{fig:Five Trees} result from applying Procedure \ref{proc II} to $S_3$ and thus admit a $\mathbb{Z}_2^2$-cordial labeling.
        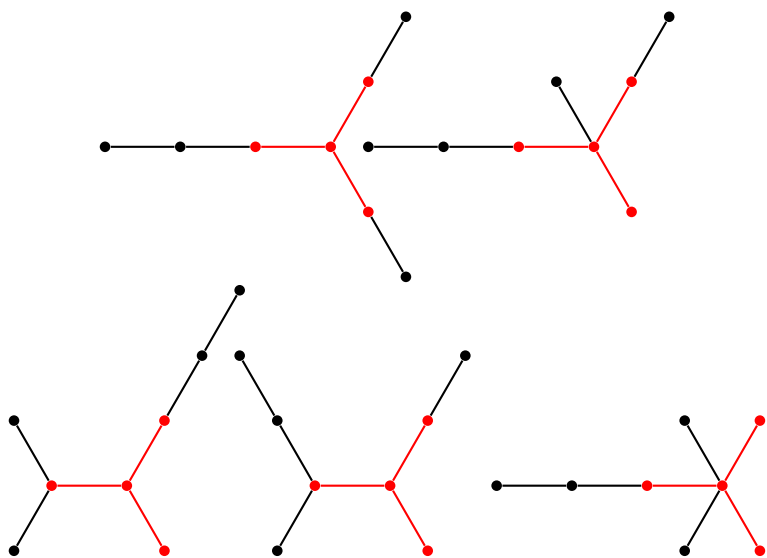
\begin{figure}[H]
        \begin{center}
          \begin{tikzpicture}

    \node[filled, red, minimum width = 4pt] (1) at (0-3.5,0) {};
    \node[filled, red, minimum width = 4pt] (2) at (-1-3.5,0) {};
    \node[filled, red, minimum width = 4pt] (3) at (.5-3.5,.866) {};
    \node[filled, red, minimum width = 4pt] (4) at (.5-3.5,-.866) {};
    \node[filled, minimum width = 4pt] (5) at (-7.5+2,0) {};
    \node[filled, minimum width = 4pt] (6) at (-8.5+2,0) {};
    \node[filled, minimum width = 4pt] (7) at (-4.5+2,1.73) {};
    \node[filled, minimum width = 4pt] (8) at (-4.5+2,-1.73) {};
    \draw[red, thick] (1) -- (2);
    \draw[red, thick] (1) -- (3);
    \draw[red, thick] (1) -- (4);
    \draw[black, thick] (2) -- (5);
    \draw[black, thick] (5) -- (6);
    \draw[black, thick] (3) -- (7);
    \draw[black, thick] (4) -- (8);

    \node[filled, red, minimum width = 4pt] (11) at (0,0) {};
    \node[filled, red, minimum width = 4pt] (12) at (-1,0) {};
    \node[filled, red, minimum width = 4pt] (13) at (.5,.866) {};
    \node[filled, red, minimum width = 4pt] (14) at (.5,-.866) {};
    \node[filled, minimum width = 4pt] (15) at (-2,0) {};
    \node[filled, minimum width = 4pt] (16) at (-3,0) {};
    \node[filled, minimum width = 4pt] (17) at (1,1.73) {};
    \node[filled, minimum width = 4pt] (18) at (-.5,.866) {};
    \draw[red, thick] (11) -- (12);
    \draw[red, thick] (11) -- (13);
    \draw[red, thick] (11) -- (14);
    \draw[black, thick] (12) -- (15);
    \draw[black, thick] (15) -- (16);
    \draw[black, thick] (13) -- (17);
    \draw[black, thick] (11) -- (18);

\end{tikzpicture}
          \begin{tikzpicture}

    \node[filled, red, minimum width = 4pt] (211) at (0,0) {};
    \node[filled, red, minimum width = 4pt] (212) at (-1,0) {};
    \node[filled, red, minimum width = 4pt] (213) at (.5,.866) {};
    \node[filled, red, minimum width = 4pt] (214) at (.5,-.866) {};
    \node[filled, minimum width = 4pt] (215) at (-1.5,-.866) {};
    \node[filled, minimum width = 4pt] (216) at (-1.5,.866) {};
    \node[filled, minimum width = 4pt] (217) at (1,1.73) {};
    \node[filled, minimum width = 4pt] (218) at (1.5,2.60) {};
    \draw[red, thick] (211) -- (212);
    \draw[red, thick] (211) -- (213);
    \draw[red, thick] (211) -- (214);
    \draw[black, thick] (212) -- (215);
    \draw[black, thick] (212) -- (216);
    \draw[black, thick] (213) -- (217);
    \draw[black, thick] (217) -- (218);
    
    \node[filled, red, minimum width = 4pt] (2111) at (3+.5,0) {};
    \node[filled, red, minimum width = 4pt] (2112) at (2+.5,0) {};
    \node[filled, red, minimum width = 4pt] (2113) at (3.5+.5,.866) {};
    \node[filled, red, minimum width = 4pt] (2114) at (3.5+.5,-.866) {};
    \node[filled, minimum width = 4pt] (2115) at (1.5+.5,-.866) {};
    \node[filled, minimum width = 4pt] (2116) at (1.5+.5,.866) {};
    \node[filled, minimum width = 4pt] (2117) at (4+.5,1.73) {};
    \node[filled, minimum width = 4pt] (2118) at (1+.5,1.73) {};
    \draw[red, thick] (2111) -- (2112);
    \draw[red, thick] (2111) -- (2113);
    \draw[red, thick] (2111) -- (2114);
    \draw[black, thick] (2112) -- (2115);
    \draw[black, thick] (2112) -- (2116);
    \draw[black, thick] (2113) -- (2117);
    \draw[black, thick] (2116) -- (2118);

\end{tikzpicture}
          \begin{tikzpicture}
    \node[filled, red, minimum width = 4pt] (111) at (0,0) {};
    \node[filled, red, minimum width = 4pt] (112) at (-1,0) {};
    \node[filled, red, minimum width = 4pt] (113) at (.5,.866) {};
    \node[filled, red, minimum width = 4pt] (114) at (.5,-.866) {};
    \node[filled, minimum width = 4pt] (115) at (-2,0) {};
    \node[filled, minimum width = 4pt] (116) at (-3,0) {};
    \node[filled, minimum width = 4pt] (117) at (-.5,.866) {};
    \node[filled, minimum width = 4pt] (118) at (-.5,-.866) {};
    \draw[red, thick] (111) -- (112);
    \draw[red, thick] (111) -- (113);
    \draw[red, thick] (111) -- (114);
    \draw[black, thick] (112) -- (115);
    \draw[black, thick] (115) -- (116);
    \draw[black, thick] (111) -- (117);
    \draw[black, thick] (111) -- (118);
\end{tikzpicture}
        \end{center}
        \caption{Five trees from $S_3$ via Procedure \ref{proc II}}
        \label{fig:Five Trees}
     \end{figure}

    The last 8 trees must be verified by hand, which is done in Figure~\ref{trees verified by hand}.
      \begin{figure}[H]
        \begin{tikzpicture}[yscale=0.75, xscale=0.75]

    \node[filled, minimum width = 4pt] (1) at (-4,0)[label=0]  {};
    \node[filled, minimum width = 4pt] (2) at (-3,0)[label=a]  {};
    \node[filled, minimum width = 4pt] (3) at (-2,0)[label=b]  {};
    \node[filled, minimum width = 4pt] (4) at (-1,0)[label=c]  {};
    \node[filled, minimum width = 4pt] (5) at (0,0)[label=c]  {};
    \node[filled, minimum width = 4pt] (6) at (1,0)[label=a]  {};
	\node[filled, minimum width = 4pt] (7) at (2,0)[label=b]  {};
	\node[filled, minimum width = 4pt] (8) at (3,0)[label=0]  {};
	\node[filled, minimum width = 4pt] (9) at (1,-1.7)[color=white]  {};

    \draw[black, thick] (1) -- (2) node [midway,below] {a};
    \draw[black, thick] (2) -- (3) node [midway,below] {c};
    \draw[black, thick] (3) -- (4) node [midway,below] {a};
    \draw[black, thick] (4) -- (5) node [midway,below] {0};
    \draw[black, thick] (5) -- (6) node [midway,below] {b};
    \draw[black, thick] (6) -- (7) node [midway,below] {c};
    \draw[black, thick] (7) -- (8) node [midway,below] {b};

\end{tikzpicture}	
\begin{tikzpicture}[yscale=0.75, xscale=0.75]
		
		\node[filled, minimum width = 4pt] (1) at (-4,0)[label=0]  {};
		\node[filled, minimum width = 4pt] (2) at (-3,0)[label=a]  {};
		\node[filled, minimum width = 4pt] (3) at (-2,0)[label=b]  {};
		\node[filled, minimum width = 4pt] (4) at (-1,0)[label=b]  {};
		\node[filled, minimum width = 4pt] (5) at (0,0)[label=c]  {};
		\node[filled, minimum width = 4pt] (6) at (1,0)[label=c]  {};
		\node[filled, minimum width = 4pt] (7) at (2,0)[label=a]  {};
		\node[filled, minimum width = 4pt] (8) at (1,-1)[label=below:0]  {};

		\draw[black, thick] (1) -- (2) node [midway,below] {a};
		\draw[black, thick] (2) -- (3) node [midway,below] {c};
		\draw[black, thick] (3) -- (4) node [midway,below] {0};
		\draw[black, thick] (4) -- (5) node [midway,below] {a};
		\draw[black, thick] (5) -- (6) node [midway,below] {0};
		\draw[black, thick] (6) -- (7) node [midway,below] {b};
		\draw[black, thick] (6) -- (8) node [near end, right] {c};
		
	\end{tikzpicture}
	\begin{tikzpicture}[yscale=0.75, xscale=0.75]
		
		\node[filled, minimum width = 4pt] (1) at (-4,0)[label=a]  {};
		\node[filled, minimum width = 4pt] (2) at (-3,0)[label=b]  {};
		\node[filled, minimum width = 4pt] (3) at (-2,0)[label=b]  {};
		\node[filled, minimum width = 4pt] (4) at (-1,0)[label=c]  {};
		\node[filled, minimum width = 4pt] (5) at (0,0)[label=c]  {};
		\node[filled, minimum width = 4pt] (6) at (1,0)[label=a]  {};
		\node[filled, minimum width = 4pt] (7) at (-3,-1)[label=below:0]  {};
		\node[filled, minimum width = 4pt] (8) at (0,-1)[label=below:0]  {};

		\draw[black, thick] (1) -- (2) node [midway,below] {c};
		\draw[black, thick] (2) -- (3) node [midway,below] {0};
		\draw[black, thick] (3) -- (4) node [midway,below] {a};
		\draw[black, thick] (4) -- (5) node [midway,below] {0};
		\draw[black, thick] (5) -- (6) node [midway,below] {b};
		\draw[black, thick] (2) -- (7) node [near end, right] {b};
		\draw[black, thick] (5) -- (8) node [near end, right] {c};
	\end{tikzpicture}
        	\begin{tikzpicture}[yscale=0.75, xscale=0.75]
		
		\node[filled, minimum width = 4pt] (1) at (-4,0)[label=a]  {};
		\node[filled, minimum width = 4pt] (2) at (-3,0)[label=b]  {};
		\node[filled, minimum width = 4pt] (3) at (-2,0)[label=c]  {};
		\node[filled, minimum width = 4pt] (4) at (-1,0)[label=a]  {};
		\node[filled, minimum width = 4pt] (5) at (0,0)[label=above left:0]  {};
		\node[filled, minimum width = 4pt] (6) at (1,0)[label=0]  {};
		\node[filled, minimum width = 4pt] (7) at (0,1)[label=above:b]  {};
		\node[filled, minimum width = 4pt] (8) at (0,-1)[label=below:c]  {};

		\draw[black, thick] (1) -- (2) node [midway,below] {c};
		\draw[black, thick] (2) -- (3) node [midway,below] {a};
		\draw[black, thick] (3) -- (4) node [midway,below] {b};
		\draw[black, thick] (4) -- (5) node [midway,below] {a};
		\draw[black, thick] (5) -- (6) node [midway,below] {0};
		\draw[black, thick] (5) -- (7) node [near end, right] {b};
		\draw[black, thick] (5) -- (8) node [near end, right] {c};
		
	\end{tikzpicture}
	\begin{tikzpicture}[yscale=0.75, xscale=0.75]
		
		\node[filled, minimum width = 4pt] (1) at (-4,0)[label=0]  {};
		\node[filled, minimum width = 4pt] (2) at (-3,0)[label=a]  {};
		\node[filled, minimum width = 4pt] (3) at (-2,0)[label=b]  {};
		\node[filled, minimum width = 4pt] (4) at (-1,0)[label=b]  {};
		\node[filled, minimum width = 4pt] (5) at (0,0)[label=c]  {};
		\node[filled, minimum width = 4pt] (6) at (1,0)[label=c]  {};
		\node[filled, minimum width = 4pt] (7) at (2,0)[label=a]  {};
		\node[filled, minimum width = 4pt] (8) at (0,-1)[label=below:0]  {};

		\draw[black, thick] (1) -- (2) node [midway,below] {a};
		\draw[black, thick] (2) -- (3) node [midway,below] {c};
		\draw[black, thick] (3) -- (4) node [midway,below] {0};
		\draw[black, thick] (4) -- (5) node [midway,below] {a};
		\draw[black, thick] (5) -- (6) node [midway,below] {0};
		\draw[black, thick] (6) -- (7) node [midway,below] {b};
		\draw[black, thick] (5) -- (8) node [near end, right] {c};
		
	\end{tikzpicture}
	\begin{tikzpicture}[yscale=0.75, xscale=0.75]
		
		\node[filled, minimum width = 4pt] (1) at (-4,0)[label=a]  {};
		\node[filled, minimum width = 4pt] (2) at (-3,0)[label=b]  {};
		\node[filled, minimum width = 4pt] (3) at (-2,0)[label=b]  {};
		\node[filled, minimum width = 4pt] (4) at (-1,0)[label=c]  {};
		\node[filled, minimum width = 4pt] (5) at (0,0)[label=c]  {};
		\node[filled, minimum width = 4pt] (6) at (1,0)[label=a]  {};
		\node[filled, minimum width = 4pt] (7) at (-3,-1)[label=below:0]  {};
		\node[filled, minimum width = 4pt] (8) at (-1,-1)[label=below:0]  {};

		\draw[black, thick] (1) -- (2) node [midway,below] {c};
		\draw[black, thick] (2) -- (3) node [midway,below] {0};
		\draw[black, thick] (3) -- (4) node [midway,below] {a};
		\draw[black, thick] (4) -- (5) node [midway,below] {0};
		\draw[black, thick] (5) -- (6) node [midway,below] {b};
		\draw[black, thick] (2) -- (7) node [near end, right] {b};
		\draw[black, thick] (4) -- (8) node [near end, right] {c};
		
	\end{tikzpicture}
        	\begin{tikzpicture}[yscale=0.75, xscale=0.75]
		
		\node[filled, minimum width = 4pt] (1) at (-4,0)[label=a]  {};
		\node[filled, minimum width = 4pt] (2) at (-3,0)[label=b]  {};
		\node[filled, minimum width = 4pt] (3) at (-2,0)[label=b]  {};
		\node[filled, minimum width = 4pt] (4) at (-1,0)[label=c]  {};
		\node[filled, minimum width = 4pt] (5) at (0,0)[label=c]  {};
		\node[filled, minimum width = 4pt] (6) at (1,0)[label=a]  {};
		\node[filled, minimum width = 4pt] (7) at (2,0)[label=0]  {};
		\node[filled, minimum width = 4pt] (8) at (-1,-1)[label=below:0]  {};

		\draw[black, thick] (1) -- (2) node [midway,below] {c};
		\draw[black, thick] (2) -- (3) node [midway,below] {0};
		\draw[black, thick] (3) -- (4) node [midway,below] {a};
		\draw[black, thick] (4) -- (5) node [midway,below] {0};
		\draw[black, thick] (5) -- (6) node [midway,below] {b};
		\draw[black, thick] (6) -- (7) node [midway,below] {a};
		\draw[black, thick] (4) -- (8) node [near end, right] {c};
		
	\end{tikzpicture}
	\begin{tikzpicture}[yscale=0.75, xscale=0.75]
		
		\node[filled, minimum width = 4pt] (1) at (-2,1)[label=above:a]  {};
		\node[filled, minimum width = 4pt] (2) at (-2,-1)[label=below:0]  {};
		\node[filled, minimum width = 4pt] (3) at (-2,0)[label=left:b]  {};
		\node[filled, minimum width = 4pt] (4) at (-1,0)[label=b]  {};
		\node[filled, minimum width = 4pt] (5) at (0,0)[label=above left:c]  {};
		\node[filled, minimum width = 4pt] (6) at (1,0)[label=c]  {};
		\node[filled, minimum width = 4pt] (7) at (0,1)[label=above:a]  {};
		\node[filled, minimum width = 4pt] (8) at (0,-1)[label=below:0]  {};

		\draw[black, thick] (1) -- (3) node [midway,right] {c};
		\draw[black, thick] (2) -- (3) node [midway,right] {b};
		\draw[black, thick] (3) -- (4) node [midway,below] {0};
		\draw[black, thick] (4) -- (5) node [midway,below] {a};
		\draw[black, thick] (5) -- (6) node [midway,below] {0};
		\draw[black, thick] (5) -- (7) node [midway, right] {b};
		\draw[black, thick] (5) -- (8) node [midway, right] {c};
		
	\end{tikzpicture}
      \caption{Eight remaining trees}
      \label{trees verified by hand}
     \end{figure}
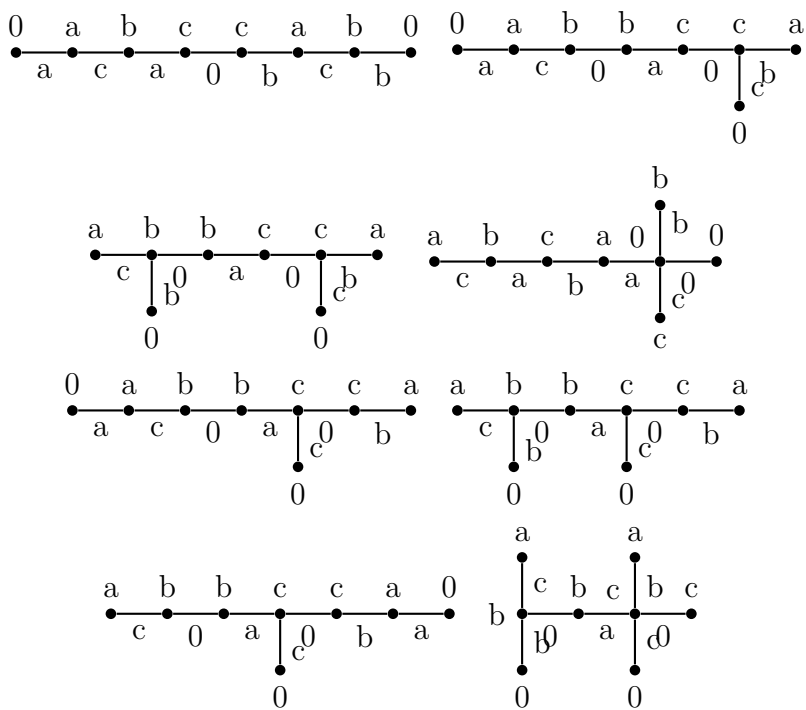

    Now, suppose that $k\geq 2$. Let $T^*$ be any tree with $|V(T^*)| = 4k+4$. We have three cases:
    \begin{enumerate}
        \item $T^*$ has four or more leaves, in which case $T^*$ can be obtained by applying Procedure \ref{proc I} to to a $\mathbb{Z}_2^2$-cordial tree $T$ with $|V(T)|=4k$.
        \item $T^*$ has exactly three leaves, in which case $T^*$ can be obtained by applying Procedure \ref{proc II} to a $\mathbb{Z}_2^2$-cordial tree $T$ with $|V(T)| = 4k$.
        \item $T^*$ has exactly 2 leaves, in which case $T^*$ is a path meaning $T^*$ is $\mathbb{Z}_2^2$-cordial \cite[Theorem 3.4]{pechenikWise}.\qedhere
    \end{enumerate}
\end{proof}

\section{$\mathbb{Z}_2^2$-Cordiality for Trees}

We come now to our main result on tree cordiality.

\begin{theorem} \label{theorem: main result on trees}
    All trees, except $P_4$ and $P_5$, are $\mathbb{Z}_2^2$-cordial.
\end{theorem}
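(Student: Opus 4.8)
The plan is to leverage the $4k$-vertex result already in hand (Theorem~\ref{extension of 4k vertices to 4k+4 verts}) together with the extension lemma (Lemma~\ref{extending 4k to 4k+3}), reducing the remaining residues $4k+1$, $4k+2$, $4k+3$ to the $4k$ case, while disposing of paths through the classification of Pechenik and Wise. First I would record that $P_4$ and $P_5$ are \emph{not} $\mathbb{Z}_2^2$-cordial by \cite[Theorem~3.4]{pechenikWise}, so the two claimed exceptions are genuine; the same theorem shows that every other path is $\mathbb{Z}_2^2$-cordial, which settles all trees of maximum degree at most $2$. It then remains to prove that every tree $T$ possessing a vertex of degree at least $3$ --- equivalently, every non-path tree --- is $\mathbb{Z}_2^2$-cordial.

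For such a $T$, write $|V(T)| = 4k + r$ with $0 \le r \le 3$. If $r = 0$ then $T$ is not $P_4$ (being a non-path), so Theorem~\ref{extension of 4k vertices to 4k+4 verts} applies directly. If $r \in \{1,2,3\}$, the idea is to delete $r$ leaves one at a time so as to reach a connected subtree $T'$ on $4k$ vertices that is not $P_4$, and then to recover $T$ as an extension of $T'$ on at most $4k+3$ vertices, applying Lemma~\ref{extending 4k to 4k+3}; the deleted vertices can be reattached one leaf at a time in reverse order, so the leaf-by-leaf hypothesis underlying Lemma~\ref{lemma: extension of trees} is met. When $4k \ge 8$ any $4k$-vertex subtree avoids $P_4$ for cardinality reasons, so the only delicate range is $|V(T)| \in \{5,6,7\}$, that is $k = 1$. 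Here I would exploit the non-path hypothesis: $T$ has a vertex $v$ of degree at least $3$, so three of its neighbors $u_1, u_2, u_3$ induce a copy of $S_3$ on $\{v, u_1, u_2, u_3\}$, and pruning the remaining $r = |V(T)| - 4$ vertices as leaves produces $T' \cong S_3$. Since $S_3$ is $\mathbb{Z}_2^2$-cordial (the base case of Theorem~\ref{extension of 4k vertices to 4k+4 verts}), Lemma~\ref{extending 4k to 4k+3} then yields the cordiality of $T$.

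I expect the genuine obstacle to be exactly this small range $|V(T)| \in \{5,6,7\}$, where a careless leaf deletion could strand us at the non-cordial $P_4$: indeed $P_5$, $P_6$, and $P_7$ reduce only to $P_4$, which is precisely why paths must be removed from the reduction and handled through \cite{pechenikWise}, and why $P_5$ persists as an exception. For non-paths this trap is sidestepped by steering the deletion toward $S_3$, and the two facts that make this legitimate --- that every tree with all degrees at most $2$ is a path, and that any connected subtree of a tree is reachable by successively deleting leaves --- are both elementary. Assembling the path and non-path cases shows that a tree fails to be $\mathbb{Z}_2^2$-cordial exactly when it is $P_4$ or $P_5$, which is the stated classification.
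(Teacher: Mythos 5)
Your proposal is correct and follows essentially the same route as the paper: reduce to the $4k$-vertex case via Theorem~\ref{extension of 4k vertices to 4k+4 verts} and Lemma~\ref{extending 4k to 4k+3}, steer the small trees ($5$--$7$ vertices) with a degree-$\geq 3$ vertex onto the cordial anchor $S_3$ rather than $P_4$, and dispose of paths by citing \cite[Theorem~3.4]{pechenikWise}. The only difference is organizational --- you split into paths versus non-paths up front, while the paper first isolates the extensions of $P_4$ on at most $7$ vertices and then splits by leaf count --- and your version is, if anything, slightly more explicit about why the leaf-deletion reduction is always available.
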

\begin{proof}
    Combining Theorem \ref{extension of 4k vertices to 4k+4 verts} and Lemma \ref{extending 4k to 4k+3}, we are left to consider trees $T^*$ which are an extension of $P_4$ such that $|V(T^*)|\leq 7.$

    If $T^*$ has three or more leaves, then it is easy to see that $T^*$ is an extension of the $\mathbb{Z}_2^2$-cordial graph $S_3$ and is thus $\mathbb{Z}_2^2$-cordial by Lemma~\ref{extending 4k to 4k+3}.
    If, on the other hand, $T^*$ has exactly two leaves, then $T^*$ is a path and, thus, is $\mathbb{Z}_2^2$-cordial if and only if $T^*$ is \emph{not} $P_4$ or $P_5$ as shown in \cite[Theorem 3.4]{pechenikWise}.
\end{proof}

We end this section with a definition and conjecture.

\begin{definition}
    A family of $\mathcal{G}$ graphs is called \emph{weakly $A$-cordial} if
    all but a finite number of elements of $\mathcal{G}$ are $A$-cordial.
\end{definition}

For example, Theorem \ref{theorem: main result on trees} shows that the family of trees is weakly $\mathbb{Z}_2^2$-cordial.

\begin{conjecture} \label{conjecture: weakly cordial}
    For any abelian group $A$, the set of trees is weakly $A$-cordial.
\end{conjecture}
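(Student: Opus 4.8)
The plan is to establish weak $A$-cordiality for trees by extending the inductive strategy that succeeded for $\mathbb{Z}_2^2$ to an arbitrary finite abelian group $A$ with $|A| = n$ (the infinite case reduces to a finite one, since cordiality only constrains how labels distribute among finitely many group elements once $|V|$ is fixed). The core engine is already in place: Lemma~\ref{lemma: extension of trees} lets me pass from an $A$-cordial tree $T$ on $nk$ vertices to any extension with up to $nk + \floor*{\frac{n}{2}} + 1$ vertices. The gap I must close is the stretch from roughly $nk + \frac{n}{2}$ up to $nk + n$, i.e.\ reaching the next multiple of $n$. For $\mathbb{Z}_2^2$ this gap was exactly one vertex (handled by Procedures~\ref{proc I} and \ref{proc II}), but for general $n$ the gap is about $\frac{n}{2}$ vertices wide, so the single-leaf argument no longer suffices on its own.

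First I would set up the induction on $k$, with the inductive hypothesis that \emph{all but finitely many} trees on $nk$ vertices are $A$-cordial. The inductive step takes an arbitrary tree $T^*$ on $n(k+1)$ vertices and asks whether it can be realized as an extension of an $A$-cordial tree $T$ on $nk$ vertices by adding $n$ new vertices. The key structural observation, generalizing the three-case split in the proof of Theorem~\ref{extension of 4k vertices to 4k+4 verts}, is that if $T^*$ has many leaves (say at least $n$, or more robustly at least $\frac{n}{2}$), one can delete a suitable independent set of leaves to obtain a smaller tree $T$, label $T$ cordially by induction, and then extend. The freedom in choosing which leaves to prune, combined with the counting argument from Lemma~\ref{lemma: extension of trees} (at each added leaf, the number of forbidden vertex labels is strictly less than $n$ provided we add fewer than $\frac{n+1}{2}$ leaves at a time), should let me push the labeling through. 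When $T^*$ has few leaves it is ``path-like,'' and the genuinely long paths $P_m$ must be handled by a direct construction showing $P_m$ is $A$-cordial for all large $m$; this mirrors how paths were isolated as the residual case for $\mathbb{Z}_2^2$.

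The hard part will be controlling the gap between $\floor*{\frac{n}{2}}$ and $n$. Lemma~\ref{lemma: extension of trees} stalls exactly when we need to add close to $n$ vertices but only $\floor*{\frac{n}{2}}+1$ are guaranteed addable. The natural fix is a two-stage scheme: choose an intermediate $A$-cordial tree not on a multiple of $n$ but positioned so that $T^*$ sits within $\floor*{\frac{n}{2}}+1$ of it, which requires proving a strengthened extension lemma whose ``anchor'' trees occur at every residue class mod $n$, not just at $nk$. Establishing that such well-distributed anchor trees exist (and that a generic $T^*$ can be carved down to one) is the crux, and it is precisely here that the ``finite number of exceptions'' is forced upon us: small trees and short paths lack the structural room for the carving to work. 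I would therefore expect the final theorem to require (i) a base-case computation verifying $A$-cordiality for all sufficiently large paths $P_m$, and (ii) a leaf-pruning lemma guaranteeing enough prunable leaves in any non-path tree, with the weak (rather than full) cordiality conclusion absorbing the bounded family of small trees where neither (i) nor (ii) applies.
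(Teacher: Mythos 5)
The statement you are trying to prove is, in the paper, precisely a \emph{conjecture}: the authors do not prove it, and in their closing remarks they state that a proof ``seems far off at this time.'' So there is no proof in the paper to compare against; the only question is whether your proposal closes the gap on its own, and it does not. What you have written is a strategy outline whose load-bearing steps are themselves left as open problems. The most serious one is the path case: your plan requires that all sufficiently long paths $P_m$ be $A$-cordial for an arbitrary finite abelian group $A$, deferred to ``a direct construction.'' No such construction is known in general; the paper only cites path results for $\mathbb{Z}_2^2$ (Pechenik--Wise) and for certain cyclic groups (Patrias--Pechenik, Cichacz), and the general-$A$ path problem is essentially as hard as the conjecture itself. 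A second concrete gap: ``few leaves'' does not mean ``path-like'' in any usable sense. A spider with three legs has exactly three leaves and can have arbitrarily many vertices, so trees with between $3$ and $n-1$ leaves form infinitely many non-path trees that are reached neither by your leaf-pruning step (which wants to delete on the order of $n$ leaves) nor by your path step. The $\mathbb{Z}_2^2$ proof only survives this because Procedure~\ref{proc II} handles the three-leaf case with an ad hoc two-edge appendage and a finite case check in Figure~\ref{fig:three lines with labels}; for general $n$ one would need analogues for every leaf count up to $n-1$ and for appended subtrees of every shape on $n$ vertices, which is an unbounded amount of casework absent a new idea.

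Two further points. The counting bound in Lemma~\ref{lemma: extension of trees} is not an artifact you can engineer around by bookkeeping: once $j \geq \frac{n+1}{2}$ labels have been added past a multiple of $n$, the forbidden vertex labels plus forbidden edge sums can genuinely exhaust $A$, so your ``anchor trees in every residue class mod $n$'' lemma is not a strengthening of the existing argument but a new theorem requiring new cordial labelings with prescribed, nonuniform label deficits --- you identify this as the crux and then do not supply it. Finally, your induction hypothesis ``all but finitely many trees on $nk$ vertices are $A$-cordial'' is vacuous for each fixed $k$ (there are only finitely many such trees); to conclude weak cordiality you need the exceptional set to be empty for all $k \geq k_0$, i.e.\ full cordiality above a threshold, which is what must actually be proved. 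The one part of your sketch that is fine is the infinite case, though for a simpler reason than you give: Theorem~\ref{theorem: free cordial after} makes every finite graph $A$-cordial when $A$ is infinite.
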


\section{General Obstruction to Friendship Cordiality} \label{section: obstruction to friendship cordiality}

We now turn our attention to friendship graphs.
Below is a general obstruction to friendship cyclic cordiality.

\begin{theorem} \label{theorem: obstruction}
    Let $m, n \in \mathbb{N}$ with $m \geq 1$. If
    \begin{enumerate}
        \item $m, n$ are even
        \item $m \mid 3n$
        \item $4 \nmid n$,
    \end{enumerate}
    then $F_n$ is not $\mathbb{Z}_m$-cordial.
\end{theorem}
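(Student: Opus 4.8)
The plan is to obtain a contradiction by computing the sum of all induced edge labels in two different ways and comparing their images under the reduction homomorphism $\pi \colon \mathbb{Z}_m \to \mathbb{Z}_2$. First I would fix notation: let $v_0$ be the central vertex with label $c := \ell(v_0)$, and for the $i$-th triangle let $a_i, b_i$ be the labels of its two outer vertices. Then $F_n$ has $|V| = 2n+1$ vertices and $|E| = 3n$ edges, and the three edges of the $i$-th triangle carry the induced labels $c + a_i$, $c + b_i$, and $a_i + b_i$.

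The first step uses hypothesis $(2)$. Since $m \mid 3n = |E|$, any $\mathbb{Z}_m$-cordial labeling must distribute the edge labels perfectly: by the observation following Definition~\ref{definition: cordial}, each element of $\mathbb{Z}_m$ occurs exactly $3n/m$ times as an edge label. Summing all edge labels then gives, in $\mathbb{Z}_m$,
\[ \sum_{e \in E} \ell(e) = \frac{3n}{m} \sum_{g \in \mathbb{Z}_m} g = \frac{3n}{m} \cdot \frac{m}{2} = \frac{3n}{2}, \]
where $\tfrac{3n}{m}$ is an integer by $(2)$, while $(1)$ (with $m$ even) gives $\sum_{g \in \mathbb{Z}_m} g \equiv m/2 \pmod m$ and $(1)$ (with $n$ even) makes $3n/2$ an integer. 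On the other hand, summing the same labels triangle by triangle yields
\[ \sum_{e \in E} \ell(e) = \sum_{i=1}^{n} \big( (c + a_i) + (c + b_i) + (a_i + b_i) \big) = 2 \Big( nc + \sum_{i=1}^n (a_i + b_i) \Big), \]
which visibly lies in the subgroup $2\mathbb{Z}_m$.

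The contradiction comes from applying $\pi$, which exists precisely because $m$ is even. The second expression lies in $2\mathbb{Z}_m = \ker \pi$, so $\pi$ of the edge-label sum is $0$. The first expression gives $\pi\big(\tfrac{3n}{2}\big) = (3n/2) \bmod 2$; since hypotheses $(1)$ and $(3)$ together force $n \equiv 2 \pmod 4$, we have $n/2$ odd and hence $3n/2 = 3(n/2)$ odd, so $\pi\big(\tfrac{3n}{2}\big) = 1$. As $0 \neq 1$ in $\mathbb{Z}_2$, no $\mathbb{Z}_m$-cordial labeling of $F_n$ can exist.

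I expect the only delicate point to be the bookkeeping inside $\mathbb{Z}_m$: one must confirm that $\tfrac{3n}{2} \bmod m$ is well defined and that its image under $\pi$ is genuinely controlled by the residue of $n$ modulo $4$, independent of how $\tfrac{3n}{m} \cdot \tfrac{m}{2}$ is reduced. A useful consistency check is that recomputing the sum as $\tfrac{3n(m-1)}{2} \equiv -\tfrac{3n}{2} \pmod m$ agrees with $+\tfrac{3n}{2}$, since the two differ by $3n \equiv 0 \pmod m$; either way $\pi$ returns $1$. It is worth isolating the role of each hypothesis: $m$ even supplies both $\pi$ and the value $m/2$, condition $(2)$ forces the perfect edge distribution, and $n \equiv 2 \pmod 4$ (conditions $(1)$ and $(3)$ together) is exactly what makes $3n/2$ odd, and hence what breaks cordiality.
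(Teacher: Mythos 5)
Your proof is correct and follows essentially the same route as the paper: double-count the sum of all edge labels, noting that perfect edge distribution forces the sum to be the odd element $\tfrac{3n}{2}$ of $\mathbb{Z}_m$, while summing over triangles shows it lies in $2\mathbb{Z}_m$. Your explicit use of the reduction $\pi\colon \mathbb{Z}_m \to \mathbb{Z}_2$ just formalizes the parity contradiction the paper states in words.
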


\begin{proof}
    As $F_n$ has $3n$ edges and $m \mid 3n$, a cordial edge label frequency would be uniformly $\frac{3n}{m}$. As $m$ is even, the sum of all the elements of $\mathbb{Z}_m$ is $\frac{m}{2}$ so the sum of all the edges of a $\mathbb{Z}_m$-cordial labeling would be $\frac{3n}{m} \frac{m}{2} = \frac{3n}{2}$. If $4 \nmid n$, this is odd (and well-defined in $\mathbb{Z}_m$ as $m$ is even). On the other hand, the sum of all the edge labels is twice the sum of the circumferential vertex labels plus $2n$ times the central vertex label, which is even. Contradiction!
\end{proof}

\begin{conjecture} \label{conjecture for cordiality of friendship}
    Let $m, n \in \mathbb{N}$ with $m \geq 1$.
    Then $F_n$ is $\mathbb{Z}_m$-cordial unless
    \begin{enumerate}
        \item $m, n$ are even
        \item $m \mid 3n$
        \item $4 \nmid n$.
    \end{enumerate}
\end{conjecture}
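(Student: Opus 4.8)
The plan is to establish the converse of Theorem~\ref{theorem: obstruction}: whenever the three listed conditions do not all hold, $F_n$ is $\mathbb{Z}_m$-cordial. Since condition~(1) requires $m$ even, the obstruction is vacuous for odd $m$, so I would split the argument along the parity of $m$ and, in each case, produce explicit cordial labelings.

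For $m$ odd the claim is that \emph{every} $F_n$ is $\mathbb{Z}_m$-cordial, and Theorem~\ref{theorem: Fm or F2m reduction} reduces this to the base cases $F_1, \ldots, F_m$; the subcase $3 \mid m$ is already Corollary~\ref{corollary done odd m div by 3}. For $3 \nmid m$ I would sandwich the base window from both ends. From below, the MAD-pair labelings handle $F_1$ up to roughly $F_{\lfloor 2m/3 \rfloor}$ (Theorem~\ref{theorem: MAD}, Corollary~\ref{corollary up to m/3 via MAD}, and Theorems~\ref{theorem: to m/2} and~\ref{theorem: F2m/3-1}). From above, I would build an explicit, perfectly uniform labeling of $F_m$ --- a \emph{balanced block} of $m$ triangles at a common center whose $2m$ circumferential labels and $3m$ induced edge labels each cover $\mathbb{Z}_m$ uniformly --- and then obtain $F_{m-1}, F_{m-2}, \ldots$ by deleting a structured set of triangles and repairing the small resulting deficit using the relabeling freedom of Lemma~\ref{lemma: other cordial labelings}.

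For $m$ even I would first seek a reduction in the spirit of Theorem~\ref{theorem: Fm or F2m reduction}, but now with period $2m$ rather than $m$. The obstruction to period $m$ is exactly a parity count: a balanced block of $m$ triangles would require the $m$ outer-edge sums $x+y$ to realize each element of $\mathbb{Z}_m$ once, forcing their total to be $m/2$, whereas that total equals the sum of all $2m$ circumferential labels, which is $0$ in $\mathbb{Z}_m$; these disagree when $m$ is even. Doubling to $2m$ triangles removes the conflict, so the natural balanced block uses each label four times circumferentially and realizes each outer sum twice. Appending such a block --- which is uniform for \emph{any} central label, since the spoke sums $c + x$ inherit the uniformity of $x$ --- turns a $\mathbb{Z}_m$-cordial $F_n$ into a $\mathbb{Z}_m$-cordial $F_{n + 2m}$, reducing the even case to $F_1, \ldots, F_{2m}$. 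Constructing the block is a small cyclic-pairing design that I expect to write down explicitly.

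The heart of the matter --- and the step I expect to be hardest --- is the base-case analysis for even $m$ near the obstruction boundary. The obstruction proof isolates a global invariant: the total edge sum equals $2n\ell(c) + 2\sum_{\text{circ}}\ell$ and therefore always lies in the even subgroup $2\mathbb{Z}_m$. When $m \nmid 3n$ one is free to choose \emph{which} $r_E = 3n \bmod m$ labels receive the extra edge, and I would show this freedom is enough to realize the required frequency vector together with an admissible even total sum. The tight regime is $m \mid 3n$ with $4 \mid n$ (which is permitted): here the edge frequencies are forced to be exactly uniform and the total edge sum is rigidly $\frac{3n}{2}$, now even and hence consistent, but with no slack to exploit. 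I would handle this by assembling the labeling from balanced blocks whose partial edge sums are themselves controlled in $\mathbb{Z}_m$, so that the coupling between the frequency distribution and the global sum invariant is satisfied by construction rather than recovered by counting. Making this simultaneous control precise across all residues of $n$ and $m$ is the principal difficulty.
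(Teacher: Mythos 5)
This statement is a \emph{conjecture} in the paper, not a theorem: the authors prove it only in special cases (all $n$ when $m$ is odd and $3 \mid m$, via Corollary~\ref{corollary done odd m div by 3}; the ranges $n \leq \lfloor 2m/3\rfloor - \varepsilon$ for general odd $m$; and the necessity direction, Theorem~\ref{theorem: obstruction}), and they explicitly list the general sufficiency as open in the closing remarks. Your proposal correctly identifies the architecture the paper already has in place --- the obstruction, the period-$m$ and period-$2m$ reductions of Theorem~\ref{theorem: Fm or F2m reduction}, and the MAD-pair constructions covering the lower part of the base window --- but at every point where the paper stops, your proposal also stops, replacing the missing argument with a description of what an argument would have to accomplish. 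That is a genuine gap, not a proof.

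Concretely, two steps are missing. First, for $m$ odd with $3 \nmid m$, your ``from above'' plan is to start from the cordial $F_m$ of Theorem~\ref{theorem: Fm or F2m reduction}(a) and delete triangles down to $F_{\lfloor 2m/3\rfloor}$, ``repairing the small resulting deficit.'' Deleting a triangle with circumferential labels $(a,b)$ removes the vertex labels $a,b$ and the edge labels $a$, $b$, $a+b$; to keep both distributions within $1$ of uniform after each deletion you must exhibit, at every stage, a triangle whose five removed labels are exactly the currently over-represented ones, and this must be done simultaneously for all $j$ in the range $(\lfloor 2m/3\rfloor, m-1]$. The paper accomplishes this only for $3 \mid m$ by building a bespoke labeling of $F_{m/3}$ and its shifts (Theorem~\ref{theorem: 2m/3 to m for 3 div m}), and offers only an ad hoc sketch for $m = 8k+1$; no general mechanism is given or known. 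Second, for $m$ even your proposal defers the entire base-case analysis $F_1,\ldots,F_{2m-1}$, and you acknowledge that making the ``balanced block with controlled partial sums'' idea precise ``across all residues of $n$ and $m$ is the principal difficulty.'' The parity invariant you isolate (the total edge sum lies in $2\mathbb{Z}_m$) reproves necessity but gives no construction for sufficiency. Until these two steps are supplied with explicit labelings or an existence argument, the statement remains a conjecture.
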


Note that the above condition on $m, n$ is equivalent to requiring that
\[ 2 \mid n, \,\, 4 \nmid n, \,\, m = \frac{3n}{d} \]
for $d$ a positive, \emph{odd} divisor of $3n$.

\section{Friendship Reductions}

We begin with an important labeling convention.
By Lemma \ref{lemma: other cordial labelings}, just using label shifting, we may always assume that a $\mathbb{Z}_m$-cordial labeling of $F_n$ labels the central vertex with $0$. We will always do so.

\begin{definition}\label{def: adding friendship}
    Let $n_1, n_2 \in \mathbb{N}$. If $\ell_i:V_i \rightarrow A$, $i = 1, 2$, are labelings of $F_{n_i}$ with the central vertex labeled by $0$ by both $\ell_i$, write
    \[ F_{n_1} + F_{n_2} \]
    for the labeling of $F_{n_1 + n_2}$ using $\ell_1$ on the first $n_1$ triangles and $\ell_2$ on the next $n_2$ triangles.
\end{definition}

For example, Figure \ref{fig:example of adding cordial friendships} illustrates the sum $F_3 = F_1 + F_2$.
\begin{figure}[H]
    \begin{subfigure}{.4\textwidth}
      \centering
      \begin{tikzpicture}
   
    \node[filled, minimum width = 4pt, label = {270:0}] (0) at (0,0) {};
    \node[filled, minimum width = 4pt, label={60:1}] (1) at (1*60: 2cm) {};
    \node[filled, minimum width = 4pt, label={120:2}] (2) at (2*60: 2cm) {};

    \draw[black, thick] (0) -- (1) node [midway, fill=white] {1};
    \draw[black, thick] (0) -- (2) node [midway, fill=white] {2};
    \draw[black, thick] (1) -- (2) node [midway, fill=white] {3};

\end{tikzpicture}
      \caption{$F_1$}
    \end{subfigure}
    \hfill
    \begin{subfigure}{.4\textwidth}
      \centering
      \begin{tikzpicture}
    \node[filled, minimum width = 4pt, label = {90:0}] (0) at (0,0) {};
    \node[filled, minimum width = 4pt, label={180:9}] (9) at (3*60: 2cm) {};
    \node[filled, minimum width = 4pt, label={240:4}] (4) at (4*60: 2cm) {};
    \node[filled, minimum width = 4pt, label={300: 5}] (5) at (5*60: 2cm) {};
    \node[filled, minimum width = 4pt, label={360: 6}] (6) at (6*60: 2cm) {};
    
    \draw[black, thick] (0) -- (9) node [midway, fill=white] {9};
    \draw[black, thick] (0) -- (4) node [midway, fill=white] {4};
    \draw[black, thick] (9) -- (4) node [midway, fill=white] {13};
    \draw[black, thick] (0) -- (5) node [midway, fill=white] {5};
    \draw[black, thick] (0) -- (6) node [midway, fill=white] {6};
    \draw[black, thick] (5) -- (6) node [midway, fill=white] {11};
\end{tikzpicture}
      \caption{$F_2$}
    \end{subfigure}
    \hfill
    \begin{subfigure}{.4\textwidth}
      \centering
      \begin{tikzpicture}
    \node[filled, minimum width = 4pt, label = {270:0}] (0) at (0,0) {};
    \node[filled, minimum width = 4pt, label={60:1}] (1) at (1*60: 2cm) {};
    \node[filled, minimum width = 4pt, label={120:2}] (2) at (2*60: 2cm) {};
    \node[filled, minimum width = 4pt, label={180:9}] (9) at (3*60: 2cm) {};
    \node[filled, minimum width = 4pt, label={240:4}] (4) at (4*60: 2cm) {};
    \node[filled, minimum width = 4pt, label={300: 5}] (5) at (5*60: 2cm) {};
    \node[filled, minimum width = 4pt, label={360: 6}] (6) at (6*60: 2cm) {};
    
    \draw[black, thick] (0) -- (1) node [midway, fill=white] {1};
    \draw[black, thick] (0) -- (2) node [midway, fill=white] {2};
    \draw[black, thick] (1) -- (2) node [midway, fill=white] {3};
    \draw[black, thick] (0) -- (9) node [midway, fill=white] {9};
    \draw[black, thick] (0) -- (4) node [midway, fill=white] {4};
    \draw[black, thick] (9) -- (4) node [midway, fill=white] {13};
    \draw[black, thick] (0) -- (5) node [midway, fill=white] {5};
    \draw[black, thick] (0) -- (6) node [midway, fill=white] {6};
    \draw[black, thick] (5) -- (6) node [midway, fill=white] {11};
\end{tikzpicture}
      \caption{$F_3 = F_1 + F_2$}
    \end{subfigure}
    \hfill
    \caption{Adding friendship graphs}
    \label{fig:example of adding cordial friendships}
\end{figure}

The following lemma follows immediately from a distribution count.

\begin{lemma} \label{lemma: adding friendship}
    Let $m, n_1, n_2 \in \mathbb{N}$ with $m \geq 1$. Let $\ell_i:V_i \rightarrow \mathbb{Z}_m$, $i = 1, 2$, be labelings of $F_{n_i}$ with the central vertex labeled by $0$ for both $\ell_i$.
    Suppose both $F_{n_i}$ have $\mathbb{Z}_m$-cordial labelings. Assume also that $F_{n_2}$ has a distribution of
    \[ f_{V_2}(0) = r + 1, \,\, f_{V_2}(i) = r \,\,
       \text{for} \,\, i \not= 0, \,\, \text{and} \,\,
       f_{E_2}(j) = s \,\, \text{for all} \,\, j, \]
    for some $r, s$. Then
    \[ F_{n_1} + F_{n_2} \]
    is also $\mathbb{Z}_m$-cordial.
\end{lemma}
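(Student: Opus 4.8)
The plan is to prove the lemma by a direct bookkeeping argument, tracking how the vertex and edge label frequencies of $F_{n_1+n_2}$ decompose as the sum of the corresponding frequencies of the two summands. Recall that in $F_{n_1}+F_{n_2}$ both pieces share the single central vertex (labeled $0$), while all $2n_1$ circumferential vertices of the first piece, all $2n_2$ circumferential vertices of the second piece, and all $3n_1+3n_2$ edges are disjoint. Thus, writing $f_V, f_E$ for the frequencies of the combined labeling, I would first record the decomposition
\[ f_V(a) = f_{V_1}(a) + f_{V_2}(a) - [a=0], \qquad f_E(a) = f_{E_1}(a) + f_{E_2}(a), \]
for all $a \in \mathbb{Z}_m$, where the correction $[a=0]$ accounts for the central vertex being counted once in each $f_{V_i}$ but only once in $F_{n_1+n_2}$. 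The edge frequencies simply add because the two edge sets are disjoint.

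The edge side is the easy half: since $F_{n_2}$ has $f_{E_2}(j) = s$ uniformly and $\ell_1$ is $\mathbb{Z}_m$-cordial, $f_E(j) = f_{E_1}(j) + s$ inherits its $\max - \min \le 1$ spread directly from $f_{E_1}$, because adding the constant $s$ to every value preserves all pairwise differences. So edge-cordiality of $F_{n_1+n_2}$ follows immediately from edge-cordiality of $F_{n_1}$.

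The vertex side is where the special hypothesis on $F_{n_2}$ does the work, and I expect this to be the main (though still routine) step. Because $\ell_1$ is cordial, its vertex frequencies take only two consecutive values, say $q$ and $q+1$, with the central label $0$ appearing $q+1$ times (here I would invoke label shifting from Lemma~\ref{lemma: other cordial labelings} to arrange that $0$ sits at the higher value, or simply observe that the central vertex $0$ forces $f_{V_1}(0) \ge q$ and analyze both subcases). The point of assuming $f_{V_2}(0) = r+1$ and $f_{V_2}(i) = r$ for $i \ne 0$ is that the second piece contributes the \emph{exact same} profile — an extra $1$ precisely at $0$ — so that after subtracting the shared central vertex via the $-[a=0]$ correction, the combined frequency is $f_V(a) = f_{V_1}(a) + r$ for $a \ne 0$ and $f_V(0) = f_{V_1}(0) + r$ as well. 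In other words, the second summand adds the constant $r$ to every combined vertex frequency while contributing nothing that disturbs the distribution, so $\max_{a,a'} |f_V(a) - f_V(a')| = \max_{a,a'} |f_{V_1}(a) - f_{V_1}(a')| \le 1$, which is exactly vertex-cordiality.

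Combining the two halves gives that $F_{n_1}+F_{n_2}$ is $\mathbb{Z}_m$-cordial. The only subtlety to be careful about is the treatment of the shared central vertex in the vertex count — making sure the $-[a=0]$ correction exactly cancels the surplus $+1$ that the hypothesis places at $0$ in $f_{V_2}$ — and confirming that the specified profile of $F_{n_2}$ is what makes this cancellation land on the identity element rather than somewhere that would create a discrepancy of $2$. Everything else is a one-line observation that adding a constant to all frequencies preserves the cordiality inequalities.
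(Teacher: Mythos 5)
Your proof is correct and is exactly the ``distribution count'' that the paper invokes in a single line: the decomposition $f_V(a) = f_{V_1}(a) + f_{V_2}(a) - [a=0]$ and $f_E(a) = f_{E_1}(a) + f_{E_2}(a)$, followed by the observation that the hypotheses on $F_{n_2}$ make both contributions constant shifts. (The digression about arranging $f_{V_1}(0)$ to be the larger value is unnecessary, as your own final computation $f_V(a) = f_{V_1}(a) + r$ for all $a$ shows.)
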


Now Theorem \ref{theorem: free cordial after} shows that when
\begin{equation}
    m > 2n(3n + 1),
\end{equation}
then $F_n$ is $\mathbb{Z}_m$-cordial. This bound is not sharp. Further progress requires some reductions.

The next result reduces verifying cordiality in general to checking certain small cases. Note that by dint of part $(c)$ below, part $(b)$ is genuinely useful only when $4 \mid m$.

\begin{theorem} \label{theorem: Fm or F2m reduction}
    Let $m \in \mathbb{N}$ with $m \geq 1$.
    \begin{enumerate}
        \item[$(a)$] If $m$ is odd, then $F_m$ is $\mathbb{Z}_m$-cordial. Moreover, $F_n$ is $\mathbb{Z}_m$-cordial for all $n \in \mathbb{N}$ if and only if $F_1, F_2, \ldots, F_{m-1}$ are $\mathbb{Z}_m$-cordial.
        \item[$(b)$] If $m$ is even, then $F_{2m}$ is $\mathbb{Z}_m$-cordial. Moreover, $F_n$ is $\mathbb{Z}_m$-cordial for all $n \in \mathbb{N}$ if and only if $F_1, F_2, \ldots, F_{2m-1}$ are $\mathbb{Z}_m$-cordial.
        \item[$(c)$] If $m$ is even, but $4 \nmid m$, then $F_m$ is not $\mathbb{Z}_m$-cordial.
    \end{enumerate}
\end{theorem}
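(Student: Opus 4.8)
The plan is to dispatch part $(c)$ as an immediate corollary of Theorem~\ref{theorem: obstruction}, and to prove the existence statements in $(a)$ and $(b)$ together with their ``moreover'' clauses by exhibiting a single distinguished cordial labeling that can be glued on repeatedly via Lemma~\ref{lemma: adding friendship}. For $(c)$, I would apply Theorem~\ref{theorem: obstruction} with $n = m$: when $m$ is even and $4 \nmid m$, the pair $(m, n) = (m, m)$ satisfies that $m, n$ are even, $m \mid 3n$, and $4 \nmid n$, so $F_m$ is not $\mathbb{Z}_m$-cordial.

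For the first assertions of $(a)$ and $(b)$, recall that $F_N$ has $2N + 1$ vertices and $3N$ edges. Thus for $N = m$ with $m$ odd, a cordial labeling must use each edge label exactly $3$ times and each vertex label exactly twice, save for one label used three times; for $N = 2m$ with $m$ even, the counts are $6$ per edge label and $4$ per vertex label, save for one label used five times. The goal is to build such labelings in which the exceptional vertex label is $0$ (the central label), so that the distribution matches exactly what is required of $F_{n_2}$ in Lemma~\ref{lemma: adding friendship}. For part $(a)$, I would label both circumferential vertices of the $i$-th triangle by $i$ as $i$ ranges over $\mathbb{Z}_m$. Then the two spoke edges of triangle $i$ are labeled $i$ and the base edge is labeled $2i$; since $m$ is odd, $i \mapsto 2i$ is a bijection, so the base edges realize each label once while the spokes realize each label twice, giving uniform edge frequency $3$. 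The circumferential vertices realize each label exactly twice, so together with the central $0$ the label $0$ occurs three times and every other label twice, as required.

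The main obstacle is part $(b)$, where $m$ is even and $i \mapsto 2i$ is no longer a bijection, so the single-batch construction collapses. I would instead use $2m$ triangles split into two batches of size $m$: in the first batch the $i$-th triangle has both circumferential vertices labeled $i$ (base sum $2i$), and in the second batch the $i$-th triangle has circumferential vertices $i$ and $i+1$ (base sum $2i+1$). Since $m$ is even, the values $2i$ sweep out the even residues twice each and the values $2i+1$ sweep out the odd residues twice each, so the $2m$ base edges realize every label exactly twice; meanwhile each batch contributes every vertex label exactly twice, so the spokes realize every label four times, for a uniform edge frequency of $6$. The circumferential vertices realize each label four times, so with the central $0$ we again obtain the required distribution. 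Verifying these frequency counts is the crux; everything else is bookkeeping.

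Finally, for the ``moreover'' clauses, the forward implication is trivial. For the converse, set $M = m$ in case $(a)$ and $M = 2m$ in case $(b)$, observe that $F_0$ is vacuously cordial, and recall that by Lemma~\ref{lemma: other cordial labelings} we may take all central labels to be $0$. Given that $F_0, F_1, \ldots, F_{M-1}$ are cordial, I would write an arbitrary $n$ as $n = qM + r$ with $0 \le r < M$ and induct on $q$: the base case $F_r$ is cordial by hypothesis, and each step $F_{r + (j+1)M} = F_{r+jM} + F_M$ is cordial by Lemma~\ref{lemma: adding friendship}, taking the distinguished labeling of $F_M$ constructed above as the summand $F_{n_2}$. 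This yields $\mathbb{Z}_m$-cordiality of $F_n$ for all $n$.
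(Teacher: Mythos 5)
Your proposal is correct and follows essentially the same route as the paper: part $(c)$ via Theorem~\ref{theorem: obstruction} with $n=m$, and parts $(a)$ and $(b)$ by exhibiting an explicit cordial labeling of $F_m$ (resp.\ $F_{2m}$) with uniform edge distribution and $0$ as the exceptional vertex label, then gluing copies on via Lemma~\ref{lemma: adding friendship}. The only difference is cosmetic: in $(a)$ you use the pairs $(i,i)$ with base sums $2i$ (relying on $2$ being a unit mod odd $m$), whereas the paper uses interleaved consecutive pairs with base sums $1+4i$ (relying on $4$ being a unit); both yield the same frequency distribution $f_V(0)=3$, $f_V(i)=2$, $f_E\equiv 3$ needed for the gluing lemma.
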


\begin{proof}
    For part (a), label the central vertex of $F_m$ with $0$. Then add $m$ triangles whose circumferential vertex labels are the pairs
    \[ (0, 1), (2, 3), (4, 5) \ldots, (m-1, 0),
       (1, 2), (3, 4), (5, 6), \ldots, (m-2, m-1). \]
    This gives $f_V(0) = 3$ and $f_V(i) = 2$ for $i \not= 0$ and so is vertex-cordial.
    The above numbers also give the radial edge sums. The circumferential edge sums are $1 + 4i$ for $0 \leq i \leq m - 1$. As $4$ is a unit of $\mathbb{Z}_m$, these sums cycle through all of $\mathbb{Z}_m$. Therefore, $f_E(i) = 3$ for $0 \leq i \leq m - 1$ so that the labeling is edge-cordial as well.

    This gives a $\mathbb{Z}_m$-cordial construction of $F_m$. The general result follows from Lemma \ref{lemma: adding friendship}.

    For part (b), label the central vertex of $F_{2m}$ with $0$. Then add $2m$ triangles whose circumferential vertex labels are the pairs
    \[ (0, 1), (1, 2), (2, 3), \ldots, (m-1, 0),
       (0, 0), (1, 1), (2, 2), \ldots, (m-1, m-1). \]
    This gives $f_V(0) = 5$ and $f_V(i) = 4$ for $i \not= 0$ and so is vertex-cordial.
    The above numbers also give the radial edge sums. The circumferential edge sums are $1 + 2i$ and $2i$ for $0 \leq i \leq m - 1$. As $m$ is even, these sums cycle twice through the odds and evens, respectively, in $\mathbb{Z}_m$. Therefore, $f_E(i) = 6$ for $0 \leq i \leq m - 1$ so that the labeling is edge-cordial as well.
    This gives a $\mathbb{Z}_m$-cordial construction of $F_{2m}$. The general result follows from Lemma \ref{lemma: adding friendship}.

    For part (c), apply Theorem \ref{theorem: obstruction}.
\end{proof}

\section{MAD Pairs and Construction to $F_{\frac{m}{3}}$}

\begin{definition}
    Let $A$ be a finite abelian group and let $\mu = \lfloor\frac{|A|}{3}\rfloor$. We say that $A$ contains $n$ \emph{additively disjoint pairs} if there exists a family $\{(a_i, b_i)\}$ of $n$ pairs, $(a_i, b_i) \in A^2$, $1 \leq i \leq n$, so that
    \[ a_1, b_1, \ldots, a_n, b_n, \, a_1 + b_1, \ldots, a_n + b_n \]
    are all distinct. We say that $A$ is \emph{maximally additively disjoint (MAD)} if it contains $\mu$ additively disjoint pairs. More generally, we will refer to any set of additively disjoint pairs $\{(a_i, b_i)\}$ of maximum size as a family of \emph{MAD pairs}.  If $A$ is MAD, then there will exist $\mu$ MAD pairs.
\end{definition}

Here is our main result on MAD pairs.

\begin{theorem} \label{theorem: MAD}
    The group $\mathbb{Z}_m$ is MAD if and only if $m \not\equiv 6\mod 12$.  If $m \equiv 6 \mod 12$, then $\mathbb{Z}_m$ contains $\mu-1$ MAD pairs.
\end{theorem}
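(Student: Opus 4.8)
The plan is to prove two things: that $\mathbb{Z}_m$ is \emph{not} MAD when $m\equiv 6 \pmod{12}$, and that it \emph{is} MAD otherwise, while also producing a family of $\mu-1$ pairs in the exceptional case. Throughout, I would record first the trivial upper bound: a family of $n$ additively disjoint pairs uses $3n$ distinct elements of $\mathbb{Z}_m$, so $3n \le m$ and hence $n \le \mu$. Thus ``$\mathbb{Z}_m$ is MAD'' means a family of size exactly $\mu$ exists, and everything reduces to (i) ruling out size $\mu$ when $m\equiv 6\pmod{12}$, and (ii) exhibiting explicit families of the claimed maximal size in every case.

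For the obstruction (i), I would use a global sum count, which is the one genuinely conceptual step. Suppose $m\equiv 6\pmod{12}$; then $3\mid m$, so a family of size $\mu=m/3$ would use \emph{every} element of $\mathbb{Z}_m$ exactly once, partitioned into triples $\{a_i,b_i,a_i+b_i\}$. Summing all elements two ways gives
\[ \sum_{x\in\mathbb{Z}_m} x \;=\; \sum_{i=1}^{\mu}\bigl(a_i+b_i+(a_i+b_i)\bigr) \;=\; 2\sum_{i=1}^{\mu}(a_i+b_i). \]
Since $m$ is even, the left-hand side equals $m/2$, while the right-hand side lies in the index-two subgroup $2\mathbb{Z}_m$ of even residues. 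But $m\equiv 6\pmod{12}$ forces $m/2$ to be odd (equivalently $4\nmid m$), so $m/2\notin 2\mathbb{Z}_m$, a contradiction. Hence no family of size $\mu$ exists, proving $\mathbb{Z}_m$ is not MAD in this case. The same computation explains the positive results: whenever $3\mid m$ and all elements are used, one needs $m/2\in 2\mathbb{Z}_m$, i.e.\ $4\mid m$ or $m$ odd, which is exactly the complementary condition.

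For the constructions (ii), I would build explicit families of triples by arithmetic formulas, organized by the residue of $m$ modulo $12$ (equivalently, by $m\bmod 3$ together with the parity data above). The amount of omitted material is dictated by $m\bmod 3$: when $3\mid m$ we partition all of $\mathbb{Z}_m$ into $\mu$ triples; when $m\equiv 1,2\pmod 3$ we leave out $1$ or $2$ elements respectively; and when $m\equiv 6\pmod{12}$ we leave out $3$ elements to realize the maximal size $\mu-1$. In each case the omitted elements must be chosen so that the sum of the \emph{used} elements lies in $2\mathbb{Z}_m$ (forced by the identity above), which is always arrangeable by a parity choice of the omitted elements. For the triples themselves, the natural device is to represent elements by $\{0,1,\dots,m-1\}$ and to combine \emph{non-wrapping} triples ($a+b=c$ as integers) with \emph{wrapping} triples ($a+b=c+m$); interleaving these two types along arithmetic progressions lets one match the required global sum, which a naive split of $\mathbb{Z}_m$ into low/middle/high thirds provably cannot do.

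The main obstacle is precisely this explicit construction: assembling, for every residue class, a family of progressions whose union is the prescribed subset of $\mathbb{Z}_m$ and verifying that the $3n$ values $a_i,b_i,a_i+b_i$ are pairwise distinct, with no collisions between the wrapping and non-wrapping blocks. I expect to dispatch a bounded list of small moduli as base cases (by hand or machine) and then give closed-form families for each residue, the verification of disjointness being the bulk of the work; the conceptual content sits entirely in the sum/parity argument of step (i).
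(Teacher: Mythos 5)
Your obstruction argument for $m \equiv 6 \pmod{12}$ is correct and is in substance the paper's own: the paper routes the same parity count through Theorem \ref{theorem: obstruction} on friendship graphs (a family of $\mu$ additively disjoint pairs would yield a $\mathbb{Z}_m$-cordial labeling of $F_{\mu}$ with central vertex $0$, which that theorem forbids), whereas you sum the partition $\mathbb{Z}_m = \bigcup_i \{a_i, b_i, a_i+b_i\}$ directly and compare $m/2$ (odd, since $4 \nmid m$) with the even quantity $2\sum_i (a_i+b_i)$. The trivial upper bound $n \leq \mu$ and your count of how many elements must be omitted in each residue class of $m$ modulo $3$ are also correct.

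The genuine gap is the existence half, which is where essentially all of the content of this theorem lives. For every $m \not\equiv 6 \pmod{12}$ you must actually exhibit $\mu$ additively disjoint pairs (and $\mu - 1$ pairs when $m \equiv 6 \pmod{12}$), and your proposal only describes a strategy --- interleave wrapping and non-wrapping triples along arithmetic progressions, split into cases modulo $12$, arrange the omitted elements to satisfy the parity identity --- without producing the families or verifying that the $3n$ values are pairwise distinct. Note that the parity identity is a necessary condition on the omitted set, not a sufficient one for a partition into triples $\{a,b,a+b\}$ to exist, so nothing in your argument yet establishes that $\mathbb{Z}_m$ is MAD for even a single $m$. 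The paper discharges exactly this burden with a five-step algorithm organized around an approximate axis of symmetry $\alpha \approx m/4$ (pairs whose sums march outward from $\alpha$ among the positive representatives, then the negatives of those pairs, plus a handful of special pairs depending on $m \bmod 12$), and its appendix records the resulting closed-form families (MAD-1) through (MAD-4) together with the explicit sets $\{a_i\}$, $\{b_i\}$, $\{a_i+b_i\}$ witnessing distinctness. Until you supply comparable explicit constructions and their verification, the ``if'' direction and the $\mu-1$ claim remain unproven.
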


\begin{proof}

We prove this via an algorithm for constructing MAD pairs.  Before giving the algorithm, we choose the set
\begin{equation}
\label{S}
S = \left\{-\floor*{\frac{m-1}{2}}, -\floor*{\frac{m-1}{2}} + 1, \ldots, \floor*{\frac{m}{2}} - 1, \floor*{\frac{m}{2}} \right\}
\end{equation}
of representatives for elements of $\mathbb{Z}_m$.  We regard $S$ itself as a subset of $\mathbb{Z}$, which inherits its ordering and addition from $\mathbb{Z}$. We define
\[
\alpha = \begin{cases}
    \text{nearest integer to $\frac{m}{4}$}, & m \equiv \text{$0$, $1$, $2$, $3$, $4$, or $5$} \mod 12,\\
    \floor*{\frac{m}{4}},& m \equiv 7 \mod 12,\\
    \text{nearest half-integer to $\frac{m}{4}$,}& m \equiv \text{$8$, $9$, $10$, or $11$} \mod 12,
\end{cases}
\]
where we round up in case of a tie.  This $\alpha$ acts somewhat as an axis of symmetry for our algorithm below.  The idea is that we can choose ``half'' of the MAD pairs by starting at $\alpha$ and working our way outward, staying within the positive elements of $S$; then the other ``half'' of the MAD pairs are simply the negatives of the first half.  The algorithm is given below for $\mathbb{Z}_m$ with $m \not\equiv 6 \mod 12$. Example \ref{example:MAD31} and Figure \ref{fig:MAD31} illustrate the algorithm for $m = 31$.

\begin{enumerate}
\setcounter{enumi}{-1}
\item In the special case $m \equiv \text{$0$ or $1$ } \mod 12$, begin by choosing the MAD pair $(a_0, b_0) = (-\floor*{\frac{m}{6}}, \floor*{\frac{m}{6}})$.

\item Let $b_1 \in S$ be the greatest element such that $b_1 < \alpha$, and let $c_1 \in S$ be the least element such that $c_1 > \alpha$.  Then choose the MAD pair $(a_1, b_1)$, where $a_1 = c_1 - b_1$.  Repeat to obtain MAD pairs $(a_i, b_i)$, where $b_i$ decreases by 1 and $c_i$ increases by 1 with each step.  This process must terminate just before the $j$th step, as soon as some $b_j$ has already been used as one of the $a_i$.

\item Now choose the MAD pair $(a_j, b_j)$, where $b_j = c_{j-1} + 1$ and $a_j < \alpha$ is the greatest element that has not yet been used in a MAD pair.  Repeat to obtain MAD pairs $(a_i, b_i)$, where $b_i$ increases by 1 with each step.  This process must terminate once we have used all positive elements less than $\alpha$.  (This always occurs before some $b_k$ is the sum of a previous pair.)

\item For each MAD pair $(a_i, b_i)$ already chosen ($i \geq 1$), also choose its negative pair $( -a_i, -b_i)$.  (For $m \equiv \text{$2$ or $8$ } \mod 12$, this will not be possible for the pair with the greatest sum, since the negative of this sum is not an element of $S$.)

\item[(3*)] In the special case $m \equiv \text{$0$, $1$, or $7$ } \mod 12$, choose the additional pair $(-\alpha, 2\alpha)$.

\item There is now one final MAD pair available, consisting of an element and its negative (except in the case $m \equiv \text{$0$ or $1$ } \mod 12$, where we already chose this as the $0$th pair).
\end{enumerate}

The fact that the algorithm truly produces a family of MAD pairs, i.e., that the $a_i$, $b_i$, and $a_i+b_i$ are all distinct, and that there are $\mu$ such pairs,
is a straightforward calculation. As it involves numerous cases, the explicit calculations are relegated to the Appendix, Section~\ref{appendix}.

It remains to show that $\mathbb{Z}_m$ is not MAD when $m \equiv 6 \mod 12$.  Suppose, to the contrary, that $m = 12k + 6$ and  $\mathbb{Z}_m$ were MAD. Then $\mu = 4k+2$ and there would exist a $\mathbb{Z}_m$-cordial labeling of $F_{\mu}$, where the central vertex is labeled $0$ and the rest of the circumferential vertex pairs are labeled $(a_i, b_i)$ for $1 \leq i \leq \mu$.  However, this contradicts Theorem \ref{theorem: obstruction}, since (1) both $m$ and $\mu$ are even, (2) we have $m \mid 3\mu\ (= m)$, and (3) we have $4 \nmid \mu$. Nevertheless, we can at least find $\mu - 1$ MAD pairs by following the algorithm for the $m \equiv 7 \mod 12$ case and omitting the pair $(-\frac m2, \frac m2)$, see  \eqref{MAD7} in the Appendix, Section \ref{appendix}.
\end{proof}

\begin{example}
\label{example:MAD31}
    Let $m = 31 \equiv 7 \mod 12$.  Then $S = \{-15, \ldots, 15\}$, and $\alpha = 7$. Each step of the algorithm is given below. The results are pictured in
    Figure \ref{fig:MAD31}. Representatives from the set $S$ are placed on the circle. Each MAD pair is shown as a pair of chords whose common vertex is their sum.

    \begin{figure}[H]
    \centering
    \begin{tikzpicture}[scale=4]
\draw(0,0) circle (1);
\pgfmathsetmacro\n{31}
\foreach \i/\k in {0/0,1/$-1$,2/$-2$,3/$-3$,4/$-4$,5/$-5$,6/$-6$,7/$-7$,8/$-8$,9/$-9$,10/$-10$,11/$-11$,12/$-12$,13/$-13$,14/$-14$,15/$-15$,16/15,17/14,18/13,19/12,20/11,21/10,22/9,23/8,24/7,25/6,26/5,27/4,28/3,29/2,30/1} {
    \pgfmathsetmacro\r{\i*(360/\n)+90}
    \fill (\r:1) circle (1pt) coordinate (n-\i);
    \node at (\r:1.2) {\k};
}
\draw[blue] (n-0) -- (n-15);
\draw[blue] (n-0) -- (n-16);
\draw[green] (n-1) -- (n-12);
\draw[green] (n-12) -- (n-11);
\draw[green] (n-10) -- (n-13);
\draw[green] (n-13) -- (n-3);
\draw[green] (n-30) -- (n-19);
\draw[green] (n-19) -- (n-20);
\draw[green] (n-21) -- (n-18);
\draw[green] (n-18) -- (n-28);
\draw[red] (n-2) -- (n-8);
\draw[red] (n-6) -- (n-8);
\draw[red] (n-4) -- (n-9);
\draw[red] (n-9) -- (n-5);
\draw[red] (n-29) -- (n-23);
\draw[red] (n-25) -- (n-23);
\draw[red] (n-27) -- (n-22);
\draw[red] (n-22) -- (n-26);
\draw[purple] (n-7) -- (n-24);
\draw[purple] (n-17) -- (n-24);
\end{tikzpicture}
    \caption{MAD pairs for $\mathbb{Z}_{31}$}
    \label{fig:MAD31}
    \end{figure}
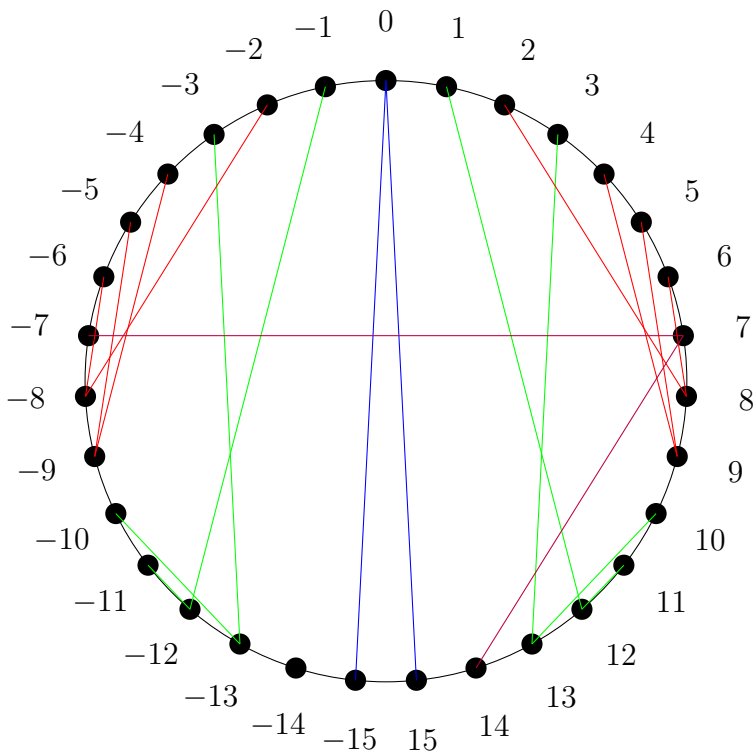

    \begin{enumerate}
        \item We have $b_1 = 6$ and $c_1 = 8$, and thus $a_1 = 2$.  Next, $b_2 = 5$ and $c_2 = 9$, so $a_2 = 4$.  This step now terminates, since we cannot have $b_3 = 4 = a_2$.
        Hence we have obtained the MAD pairs $(2,6)$ and $(4,5)$.

        \item Since $c_2 = 9$, we set $b_3 = 10$, and we have $a_3 = 3$.  Next we have $b_4 = 11$ and $a_4 = 1$.  As we have run out of candidates for $a_i$,  this step terminates and we have obtained the MAD pairs $(3, 10)$ and $(1, 11)$.

        \item Taking the negatives of the MAD pairs above, we now obtain the pairs
        \[
            (-2, -6), (-4, -5), (-3, -10), \text{ and } (-1, -11).
        \]

       \item[(3*)] We obtain the additional pair $(-7, 14)$.

       \item The last available MAD pair is $(15, -15)$.
    \end{enumerate}

    In this case, $\mu = \floor*{\frac{31}{3}}=10$ and, indeed, we have produced $10$ pairs above.  Since the sums, $a_i + b_i$, are
    \[
       0, 7, \pm 8, \pm 9, \pm 12, \pm 13,
    \]
    we see they are distinct and distinct from the constructed pairs. Therefore the algorithm produced MAD pairs.
\end{example}

Examination of the proof of Theorem \ref{theorem: MAD} results in the following.

\begin{remark} \label{remark: MAD pairs with Z}
    Let $m \in \mathbb{N}$, $m \geq 1$, and let $\mu = \floor*{\frac{m}{3}}$.
    Let $S$ be the set in Equation \eqref{S}.
    Note that all MAD pairs from Theorem \ref{theorem: MAD}, and their sums, live in $S$, and depend only on the $\mathbb{Z}$-arithmetic within $S$.  Moreover, for $m \not\equiv 6 \mod 12$, there are $\mu - 1$ additively disjoint pairs in $S \setminus \{0\}$.
\end{remark}

\begin{corollary} \label{corollary up to m/3 via MAD}
    Let $m \in \mathbb{N}$ with $m \geq 1$. If $m \not\equiv 6 \mod 12$, then $F_j$ is $\mathbb{Z}_m$-cordial for
    $0 \leq j \leq \floor*{\frac{m}{3}}$. Otherwise, $F_j$ is $\mathbb{Z}_m$-cordial for
    $0 \leq j \leq \floor*{\frac{m}{3}} - 1$.
\end{corollary}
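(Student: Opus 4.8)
The plan is to convert the additively disjoint pairs guaranteed by Theorem~\ref{theorem: MAD} directly into $\mathbb{Z}_m$-cordial labelings of $F_j$, using the standing convention that the central vertex is labeled $0$. Recall that $F_j$ has $2j+1$ vertices and $3j$ edges. If the $j$ circumferential vertex-pairs are labeled $(a_1,b_1),\dots,(a_j,b_j)$, then the complete list of vertex labels is $0, a_1, b_1, \dots, a_j, b_j$, while the induced edge labels are the two radial sums $a_i$ and $b_i$ together with the circumferential sum $a_i+b_i$ for each $i$. Thus a family of $j$ additively disjoint pairs produces exactly the $3j$ distinct edge labels $a_i, b_i, a_i+b_i$. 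Since any sub-collection of additively disjoint pairs is again additively disjoint, I am free to use any $j$ of the available pairs, which lets me treat all values of $j$ in the stated range at once.

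The first key observation is that the components $a_i, b_i$ of any additively disjoint family are automatically nonzero: if $a_i = 0$ then $a_i + b_i = b_i$, contradicting the distinctness of $b_i$ and $a_i+b_i$, and symmetrically $b_i \neq 0$. Consequently every component differs from the central label $0$, so the $2j+1$ vertex labels $0, a_1, b_1, \dots, a_j, b_j$ are pairwise distinct, and by the defining property of additive disjointness the $3j$ edge labels are pairwise distinct as well. Note that this makes the restriction to $S\setminus\{0\}$ in Remark~\ref{remark: MAD pairs with Z} unnecessary for the present purpose: only the components must avoid $0$ (which is automatic), whereas a sum $a_i+b_i$ may freely equal $0$ since it is merely an edge label.

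It then remains to check that these distinct labels are as evenly distributed as possible, which reduces to the inequalities $3j \leq m$ and $2j+1 \leq m$. Since $j \leq \mu = \floor*{\frac{m}{3}}$, we have $3j \leq 3\mu \leq m$, so every edge label has frequency $0$ or $1$, giving edge-cordiality. Likewise $2j+1 \leq 2\mu+1 \leq m$ for all $m \geq 3$, so every vertex label has frequency $0$ or $1$, giving vertex-cordiality. Feeding in the count from Theorem~\ref{theorem: MAD}, namely $\mu$ additively disjoint pairs when $m \not\equiv 6 \mod 12$ and $\mu-1$ when $m \equiv 6 \mod 12$, now yields cordial labelings of $F_j$ for precisely the stated ranges of $j$.

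The argument is essentially bookkeeping once the nonzero-component observation is in hand, so I do not expect a genuine obstacle; the only points requiring care are the boundary counts. In the equality case $3\mu = m$ (which occurs exactly when $3 \mid m$) the edge labels must exhaust $\mathbb{Z}_m$ each exactly once, and in the case $2\mu+1 = m$ (which for nontrivial $m$ happens only for $m=3$) the same is demanded of the vertex labels; in both situations the relevant distinct labels fill $\mathbb{Z}_m$ exactly, so uniformity holds and I would simply note this. Finally, the upper endpoint of each range matches the pair count of Theorem~\ref{theorem: MAD}, and the drop to $\mu-1$ when $m \equiv 6 \mod 12$ is consistent with the genuine failure of cordiality at $j=\mu$ recorded in Theorem~\ref{theorem: obstruction} (there $m = 3\mu$ with $\mu$ even and $4 \nmid \mu$).
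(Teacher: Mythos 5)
Your proof is correct and is exactly the argument the paper intends for this corollary (which it leaves implicit): label the central vertex $0$ and use $j$ of the additively disjoint pairs as the circumferential pairs, so that the $2j+1$ vertex labels and the $3j$ edge labels are each pairwise distinct, hence all frequencies are $0$ or $1$. Your observation that the components $a_i, b_i$ are automatically nonzero, and the boundary checks $3j \le m$ and $2j+1 \le m$, are the only points needing care, and you handle them correctly.
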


\section{Construction to $F_{\frac{m}{2}}$}

In the proof of the theorem below, we refer to the explicit constructions of MAD pairs from Theorem \ref{theorem: MAD}, \eqref{MAD01} to \eqref{MAD8-11}, that are given in the Appendix, Section \ref{appendix}.

\begin{theorem} \label{theorem: to m/2}
    Let $m \equiv r\mod 12$ with $0 \leq r \leq 11$ and
    $r \not = 2, 4, 6, 8, 10$. Then $F_j$ is $\mathbb{Z}_m$-cordial for all $j \leq \floor*{\frac{m}{2}}.$ For  $m \equiv 2\mod 12$, $F_j$ is $\mathbb{Z}_m$-cordial for all $j \leq \floor*{\frac{m}{2}}-1.$
\end{theorem}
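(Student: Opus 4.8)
The plan is to continue the explicit MAD construction of Theorem~\ref{theorem: MAD} past $\mu = \floor*{m/3}$ by adjoining one triangle at a time, all sharing the central label $0$, until reaching $F_{\floor*{m/2}}$. By the Division-Algorithm reformulation of cordiality given just after Definition~\ref{definition: cordial}, cordiality is a purely local condition on the frequency functions $f_V, f_E$; hence it suffices to exhibit a single sequence of triangles $T_{\mu+1}, \dots, T_{\floor*{m/2}}$ such that every partial labeling $F_\mu + T_{\mu+1} + \cdots + T_{\mu+t}$ is a $\mathbb{Z}_m$-cordial $F_{\mu+t}$. The remaining values $j \le \mu$ are already handled by Corollary~\ref{corollary up to m/3 via MAD}, so this yields all $j \le \floor*{m/2}$ at once. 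Throughout I would work with the integer representatives in the set $S$ of Remark~\ref{remark: MAD pairs with Z}, reasoning entirely in $\mathbb{Z}$-arithmetic.

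First I would record the distributions at $F_\mu$: the $3\mu$ edge labels $a_i, b_i, a_i+b_i$ are distinct, so $f_E=1$ on a $3\mu$-element set and $f_E=0$ on the remaining $m-3\mu$ residues, while the $2\mu$ circumferential labels and the central $0$ are distinct, so $f_V=1$ on a $(2\mu+1)$-element set and $0$ elsewhere. For $m$ odd the number $m-1-2\mu$ of representatives of $S\setminus\{0\}$ left unused as vertices equals exactly $2(\floor*{m/2}-\mu)$, the number of new circumferential vertices required to reach $F_{(m-1)/2}$; since we introduce these as distinct labels, every intermediate $F_{\mu+t}$ has distinct vertex labels and is automatically vertex-cordial (indeed at the top all $m$ residues occur exactly once). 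Thus edge-cordiality is the binding constraint: as $3j$ ranges over $(m,\tfrac{3m}{2}]$ for $\mu < j \le \floor*{m/2}$, the target reads ``$f_E\in\{1,2\}$, with $3j-m$ residues at $2$ and none at $0$.''

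The heart of the proof is to order and pair the unused representatives so that this target is met at each step. Because none of the intermediate targets allows a residue at frequency $0$, the very first adjoined triangle must already cover \emph{all} of the $m-3\mu\ (\le 2)$ residues that $F_\mu$ left at frequency $0$; one checks these lie among the unused vertices, so a triangle $(a,b)$ with $\{a,b,a+b\}$ consisting of those residues together with one frequency-$1$ residue can always be formed. Each subsequent triangle must then move exactly three residues from frequency $1$ up to frequency $2$, i.e.\ be a pair $(a,b)$ of unused representatives whose three labels $a,b,a+b$ are distinct and all currently at frequency $1$. The $\pm$-symmetry of the MAD families about the axis $\alpha$ is what supplies a steady reservoir of such pairs with pairwise-distinct sums all the way to $j=\floor*{m/2}$. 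Verifying that these sums never collide with an already-doubled residue and never run out prematurely, uniformly across $r = m \mod 12$ using the explicit families \eqref{MAD01}--\eqref{MAD8-11}, is where I expect the main difficulty to lie.

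Finally I would dispatch the even residues. For $m\equiv 0 \mod 12$ one has $3\mu=m$, so $f_E\equiv 1$ already at $F_\mu$ and no residue starts at frequency $0$ (the special first step is vacuous); here $F_{m/2}$ has $m+1$ vertices, so exactly one vertex label must repeat, which I would absorb by letting a single added triangle carry a repeated (nonzero) vertex label while its three edge labels remain on target. For $m\equiv 2 \mod 12$ the negation step of the MAD algorithm omits one pair, since the negative of the largest sum leaves $S$; this leaves one surplus unused representative that cannot be paired, removing exactly one usable triangle and accounting for the reduced bound $\floor*{m/2}-1$, the construction being otherwise identical. In every case the concluding verification that the final $f_V$ and $f_E$ are cordial is a routine count, so the entire substance is the existence of the step-by-step ordering described above.
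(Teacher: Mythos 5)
Your overall strategy is the same as the paper's: start from the explicit MAD labeling of $F_{\floor*{m/3}}$, note that its vertex labels are distinct and its $3\floor*{m/3}$ edge labels are distinct, and then append triangles one at a time so that every edge-label frequency stays in $\{1,2\}$ and every vertex-label frequency stays in $\{0,1\}$ (with the one unavoidable repeated circumferential label when $r=0$). Your bookkeeping of the targets --- first absorb the $m-3\mu\le 2$ residues at edge-frequency $0$, then repeatedly promote three fresh residues from frequency $1$ to frequency $2$ --- is correct and matches what the paper does, including the special first triangle (the paper uses, e.g., $(6n+2,\,-(6n+2))$ for $r=5$ and $(-(6n+2),\,3n+1)$ for $r=7$).

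There is, however, a genuine gap: you never actually produce the $\floor*{m/2}-\mu$ triangles. The assertion that the $\pm$-symmetry of the MAD families ``supplies a steady reservoir of such pairs \ldots\ all the way to $j=\floor*{m/2}$'' is precisely the content of the theorem, and you explicitly defer its verification as ``where the main difficulty lies.'' This is not a routine check: after the MAD construction the unused vertex labels form specific unions of intervals, e.g.\ $\pm\{3n+1,\ldots,4n-1\}\cup\pm\{5n,\ldots,6n-1\}$ for $r=0$, and one must pair them so that the resulting sums are pairwise distinct and disjoint from the new vertex labels. The paper's device is to pair each unused positive label with the negative of another unused label from the same interval --- the triangles $(3n+1+j,\,-(4n-1-j))$ and $(5n+j,\,-(6n-1-j))$ --- so that the circumferential sums sweep out the short interval $\{-(n-1),\ldots,n-1\}$ exactly once each, safely separated from the large new radial labels; an analogous explicit list is written down for each residue class $r$, checked against the corresponding family \eqref{MAD01}--\eqref{MAD8-11}. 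Without exhibiting such a pairing your argument is a plan rather than a proof, and your heuristic explanation of the $r=2$ loss of one triangle likewise needs to be replaced by the concrete count coming from the revised \eqref{MAD2345} family (where one pair is deleted because two sums collide at $6n+1$).
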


\begin{proof} Write $m = 12n + r$ with $0 \leq r \leq 11$
    and use the notation from Section \ref{appendix}.

    \medskip

    \textbf{Case 1:} $r = 0$ or $1$. We make use of the (MAD-1) construction for $F_{\floor*{m/3}}=F_{4n}$.

    For $r=0$, notice that each element of $\mathbb{Z}_m$ is used as an edge label exactly once.
    Other than $0$, the sums $a_i + b_i$ are not used as vertex labels. The complement of these sums in $\mathbb{Z}_m$ are the circumferential vertex labels, the $a_i$ and $b_i$, and appear as labels exactly once.

    For $r=1$, the only unused element is $-6n$. By appending the triangle with vertices $-6n$ and $3n$ with edge sum $-3n$, each element from $\mathbb{Z}_m$ is used as an edge label exactly once (except $\pm 3n$ which is used twice) and each sum $a_i+b_i$ (other than $0$ and $3n$) is not used as a vertex label.

    In either case, we can now append the following $2n-1$ triangles to $F_{4n}$ for $r=0$ and $F_{4n+1}$ for $r=1$:
    \begin{itemize}
        \item[] vertices $(3n+1)$ and $-(4n-1)$ which has edge sum $-n+2$,

        \quad\vdots
        \item[] vertices $(4n-1)$ and $-(3n+1)$ which has edge sum $n-2$,
        \item[] vertices $(5n)$ and $-(6n-1)$ which has edge sum $-n+1$,

        \quad\vdots
        \item[] vertices $(6n-1)$ and $-(5n)$ which has edge sum $n-1$.
    \end{itemize}

    For $r=1$, this yields a $\mathbb{Z}_m$-cordial labeling of $F_{6n}$ with every element in $\mathbb{Z}_{m}$ used as a vertex label exactly once.

    For $r=0$, this yields a $\mathbb{Z}_m$-cordial labeling of $F_{6n-1}$ with every element in $\mathbb{Z}_{m}$ (other than the missing $3n$) used as a vertex label exactly once. By appending the triangle with vertices $-6n$ and $3n$ with edge sum $-3n$, this can be extended to a $\mathbb{Z}_m$-cordial labeling of $F_{6n}$.

    \smallskip

    \textbf{Case 2:} $r=2$.
    In this case, we will use the revised (MAD-2) construction for $F_{\floor*{m /3}}=F_{4n}$ from Section \ref{appendix}.
    There we have two unused elements of $\mathbb{Z}_m$, $-2n+1$ and $-4n-2$.  By appending the triangle with vertices $-2n+1$ and $-4n-2$ and edge sum $-6n-1$, each element from $\mathbb{Z}_m$ is used as an edge label exactly once (except $6n+1$ which is used twice) and each sum $a_i+b_i$ (other than 0) is not used as a vertex label.

    We can now append the following $2n-1$ triangles to $F_{4n+1}$:
    \begin{itemize}
        \item[] vertices $(3n+2)$ and $-(4n+1)$ which has edge sum $-n+1$,

        \quad\vdots
        \item[] vertices $(4n+1)$ and $-(3n+2)$ which has edge sum $n-1$,
        \item[] vertices $(5n+2)$ and $-(6n)$ which has edge sum $-n+2$,

        \quad\vdots
        \item[] vertices $(6n)$ and $-(5n+2)$ which has edge sum $n-2$.
    \end{itemize}
    This yields a $\mathbb{Z}_m$-cordial labeling of $F_{6n}$ with every element in $\mathbb{Z}_{m}$ (other than the missing $6n+1$) used as a vertex label exactly once.

    \smallskip

    \textbf{Case 3:} $r=3,5,7$. In this case, we will use the (MAD-2) construction for $F_{\lfloor m/3 \rfloor}$ for $r=3,5$ and the (MAD-3) construction for $r=7$ from Section \ref{appendix}.
    For $r=3$, notice that each element from $\mathbb{Z}_m$ is used as an edge label exactly once and each sum $a_i+b_i$ (other than 0) is not used as a vertex label.

     For $r=5$, there are two unused elements, $\pm (6n+2)$. By appending the triangle with vertices $\pm (6n+2)$ and edge sum $0$, each element from $\mathbb{Z}_m$ is used as an edge label exactly once (except $0$ which is used twice) and each sum $a_i+b_i$ (other than 0) is not used as a vertex label.

     Similarly, for $r=7$ there is one unused element, $-(6n+2)$. By appending the triangle with vertices $-(6n+2)$ and $3n+1$ and edge sum $-(3n+1)$, each element from $\mathbb{Z}_m$ is used as an edge label exactly once (except  $\pm(3n+1)$ which is used twice) and each sum $a_i+b_i$ (other than 0 and $3n+1$) is not used as a vertex label.

    We can now append the following $2n$ triangles to $F_{4n+1}$ for $r=3$, $F_{4n+2}$ for $r=5$, and $F_{4n+3}$ for $r=7$:
    \begin{itemize}
        \item[] vertices $(3n+2)$ and $-(6n+1)$ which has edge sum $-3n+1$,

        \quad\vdots
        \item[] vertices $(4n+1)$ and $-(5n+2)$ which has edge sum $-n-1$,
        \item[] vertices $(5n+2)$ and $-(4n+1)$ which has edge sum $n+1$,

        \quad\vdots
        \item[] vertices $(6n+1)$ and $-(3n+2)$ which has edge sum $3n-1$.
    \end{itemize}
     This yields a $\mathbb{Z}_m$-cordial labeling of $F_{6n+1}$ for $r=3$, $F_{6n+2}$ for $r=5$, and $F_{6n+3}$ for $r=7$.

\smallskip

    \textbf{Case 4: } $r=9$ or $11$.
    In this case, we will use the (MAD-4) construction for $F_{\lfloor m/3 \rfloor}=4n+3$ from Section \ref{appendix}.
    For $r=9$, notice that each element from $\mathbb{Z}_m$ is used as an edge label exactly once and each sum $a_i+b_i$ (other than 0) is not used as a vertex label.

     For $r=11$, there are two unused elements, $\pm(6n+5)$. By appending the triangle with vertices $\pm(6n+5)$ and edge sum $0$, each element from $\mathbb{Z}_m$ is used as an edge label exactly once (except $0$ which is used twice) and each sum (other than 0) is not used as a vertex label.

     We can now append the following $2n+1$ triangles to $F_{4n+3}$ for $r=9$ and $F_{4n+4}$ for $r=11$:
    \begin{itemize}
        \item[] vertices $(3n+3)$ and $-(6n+4)$ which has edge sum $-3n-1$,

        \quad\vdots
        \item[] vertices $(4n+2)$ and $-(5n+5)$ which has edge sum $-n-3$,
        \item[] vertices $(5n+5)$ and $-(4n+3)$ which has edge sum $n+2$,

        \quad\vdots
        \item[] vertices $(6n+4)$ and $-(3n+4)$ which has edge sum $3n$,        \item[] vertices $(4n+3)$ and $-(3n+3)$ which has edge sum $n$,
    \end{itemize}
    This yields a $\mathbb{Z}_m$-cordial labeling of $F_{6n+4}$ for $r=9$ and of $F_{6n+5}$ for $r=11$.
\end{proof}

\section{Construction to $F_{\frac{2m}{3} - 1}$}

We begin with a direct construction for the cordiality of the halfway mark.

\begin{theorem} \label{theorem: m/2}
    Let $m \in \mathbb{N}$ be odd and write $m = 2k + 1$. Then
    $F_{\floor*{m/2}} = F_k$ is $\mathbb{Z}_m$-cordial.
\end{theorem}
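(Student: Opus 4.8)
The plan is to exploit the fact that $F_k$ has exactly $m = 2k+1$ vertices, so vertex-cordiality for $\mathbb{Z}_m$ forces every group element to occur exactly once as a vertex label. Following our convention I would label the central vertex $0$ and let the $2k$ circumferential vertices carry the $2k$ nonzero elements, each once. The three edges of the $i$th triangle are then the two radial edges (labeled by the two outer vertices) and one circumferential edge (labeled by their sum). Consequently the radial edges realize each nonzero element of $\mathbb{Z}_m$ exactly once, while $0$ can appear as an edge label only through a circumferential edge whose endpoints sum to $0$. Since $F_k$ has $3k$ edges and $3k = m + (k-1)$, edge-cordiality requires exactly $k-1$ elements to occur twice and the remaining $k+2$ to occur once. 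Because the radial edges already contribute one copy of each nonzero element, the whole problem reduces to choosing the pairing of the circumferential vertices so that the multiset of $k$ circumferential sums consists of $0$ together with distinct nonzero values, with $0$ occurring at least once.

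Concretely, I would write the nonzero classes as $\{1, 2, \ldots, 2k\}$ and pair them as follows: the ``central'' pair $\{k, k+1\}$, whose sum is $2k+1 \equiv 0$, together with the pairs $\{i,\, k+1+i\}$ for $1 \le i \le k-1$, whose sums are $2i + k + 1 \pmod m$. A quick check shows these pairs partition $\{1, \ldots, 2k\}$. The key computation is that the sums $2i + k + 1$ are pairwise distinct modulo $m$: since $m$ is odd, $2$ is a unit in $\mathbb{Z}_m$, so $i \mapsto 2i + k + 1$ is injective, and over the range $1 \le i \le k-1 < m$ no two of these sums coincide. For the same reason, at most one of them is congruent to $0$.

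I would then finish by a short case split on how many circumferential sums vanish. If none of the $2i + k + 1$ is $\equiv 0$, then the circumferential sums are $0$ (once) together with $k-1$ distinct nonzero values; each of those $k-1$ values is promoted from frequency $1$ to frequency $2$, giving exactly $k-1$ doubled labels. If exactly one $2i+k+1 \equiv 0$, then $0$ occurs twice and there are $k-2$ distinct nonzero circumferential sums, yielding $1 + (k-2) = k-1$ doubled labels again. In both cases the resulting frequency distribution takes only the values $1$ and $2$, with exactly $k-1$ twos, so the labeling is $\mathbb{Z}_m$-cordial.

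The main obstacle is precisely the bookkeeping of the edge frequencies: the radial edges force $0$ to be the unique ``deficient'' label, so the construction must guarantee that $0$ is hit at least once by a circumferential sum while all the other circumferential sums stay distinct and the total number of collisions lands exactly on $k-1$. The pairing above is engineered so that the designated central pair supplies the mandatory zero sum, and the injectivity of $i \mapsto 2i+k+1 \pmod m$ controls every remaining sum; the only genuine subtlety is that this map may accidentally produce a second zero, which is why the final count is handled by the two-case analysis rather than a single uniform formula.
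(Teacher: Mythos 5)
Your proof is correct and follows essentially the same strategy as the paper's: label the central vertex $0$, place each nonzero residue on the rim exactly once, and pair the rim vertices so that the circumferential sums form an arithmetic progression whose common difference is a unit of $\mathbb{Z}_m$ (hence the sums are distinct), then track where the sum $0$ lands to get exactly $k-1$ doubled edge labels. The only difference is cosmetic: you use the single pairing $\{k,k+1\}$ together with $\{i,\,k+1+i\}$ (sums stepping by $2$), whereas the paper pairs consecutive residues (sums stepping by $4$) and writes separate constructions for $k$ odd and $k$ even.
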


\begin{proof}
    First look at the case of $k$ odd.
    Label the central vertex of $F_k$ with $0$. Then add $k$ triangles whose circumferential vertex labels are the pairs
    \[ (1, 2), (3, 4), \ldots, (m - 4, m - 3), (m-2, m-1). \]
    This gives $f_V(i) = 1$ for all $i$ and so is vertex-cordial.
    The above numbers also give the radial edge sums. The circumferential edge sums are $3 + 4i$ for $0 \leq i \leq k-1$. As $4$ is a unit of $\mathbb{Z}_m$, these sums are all distinct. In addition, observe that $0$ appears as the sum of the pair $(k, k + 1)$.
    In fact, it is convenient to work with $\mathbb{Z}_m$ centered about $0$ so that the circumferential edge sums are $0$ and $\pm 4i$
    for $1 \leq i \leq \frac{k - 1}{2}$.
    Therefore, $f_E(\pm 4i) = 2$ for for $1 \leq i \leq \frac{k - 1}{2}$ and $f_E(j) = 1$ for the remaining elements of $\mathbb{Z}_m$. Thus, the labeling is edge-cordial as well.

    Turn to the case of $k$ even.
    Label the central vertex of $F_k$ with $0$. Then add $k$ triangles whose circumferential vertex labels are the pairs
    \[ (1, m - 1) \,\, \text{and} \,\,
       (2, 3), (4, 5), \ldots, (m - 5, m - 4), (m-3, m-2). \]
    This gives $f_V(i) = 1$ for all $i$ and so is vertex-cordial.
    The above numbers also give the radial edge sums. The circumferential edge sums are $0$ and $5 + 4i$ for $0 \leq i \leq k-2$. As $4$ is a unit of $\mathbb{Z}_m$, the sums $5 + 4i$ are all distinct. Again, observe that $0$ also appears as the sum of the pair $(k, k + 1)$ so that the circumferential edge sums are $0$, $0$, and $\pm 4i$
    for $1 \leq i \leq \frac{k - 2}{2}$.
    Therefore, $f_E(0) = 2$, $f_E(\pm 4i) = 2$ for for $1 \leq i \leq \frac{k - 2}{2}$, and $f_E(j) = 1$ for the remaining elements of $\mathbb{Z}_m$. Thus, the labeling is edge-cordial as well.

    For use with later theorems, we also construct an alternate cordial labeling in both cases.

    For $k$ odd,
    label the central vertex of $F_k$ with $0$. Then add $k$ triangles whose circumferential vertex labels are the pairs
    \[ (1, m - 1) \,\, \text{and} \,\,
       (2, 3), (4, 5), \ldots, (m - 5, m - 4), (m-3, m-2). \]
    This gives $f_V(i) = 1$ for all $i$ and so is vertex-cordial.
    The above numbers also give the radial edge sums. The circumferential edge sums are $0$ and $5 + 4i$ for $0 \leq i \leq k-2$ and are distinct. The later sums may be rewritten as $\pm (2 + 4i)$
    for $0 \leq i \leq \frac{k - 3}{2}$.
    Therefore, $f_E(\pm (2 + 4i)) = 2$ for $0 \leq i \leq \frac{k - 3}{2}$ and $f_E(j) = 1$ for the remaining elements of $\mathbb{Z}_m$. Thus, the labeling is edge-cordial as well.

    For $k$ even,
    label the central vertex of $F_k$ with $0$. Then add $k$ triangles whose circumferential vertex labels are the pairs
    \[ (1, m - 1), (2, m - 2) \,\, \text{and} \,\,
       (3, 4), (5, 6), \ldots, (m - 6, m - 5), (m-4, m-3). \]
    This gives $f_V(i) = 1$ for all $i$ and so is vertex-cordial.
    The above numbers also give the radial edge sums. The circumferential edge sums are $0$, $0$, and $7 + 4i$ for $0 \leq i \leq k-3$, where the later sums are distinct and nonzero. These sums $7+4i$ may be rewritten as $\pm (2 + 4i)$
    for $0 \leq i \leq \frac{k - 4}{2}$.
    Therefore, $f_E(0) = 2$, $f_E(\pm (2 + 4i)) = 2$ for $0 \leq i \leq \frac{k - 4}{2}$, and $f_E(j) = 1$ for the remaining elements of $\mathbb{Z}_m$. Thus, the labeling is edge-cordial as well.
\end{proof}

We now extend cordiality up to $F_{\floor*{2m/3} - 1}$.

\begin{theorem} \label{theorem: F2m/3-1}
    Let $m \in \mathbb{N}$ be odd and write $m = 2k + 1$. Then
    $F_{j}$ is $\mathbb{Z}_m$-cordial for
    $\floor*{\frac{m}{2}} \leq j \leq
    \floor*{\frac{2m}{3}} - \varepsilon$ where $\varepsilon = 1$ when $k$ is even and $3 \nmid k$ and $\varepsilon = 0$ otherwise.
\end{theorem}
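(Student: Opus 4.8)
The plan is to start from the explicit $\mathbb{Z}_m$-cordial labeling of $F_k$ produced in Theorem~\ref{theorem: m/2} and append triangles one at a time, exactly in the style of the proof of Theorem~\ref{theorem: to m/2}, pushing the index from $k = \lfloor m/2 \rfloor$ up to $\lfloor 2m/3 \rfloor - \varepsilon$. Each appended triangle shares the central vertex (labeled $0$) and carries two circumferential vertices labeled $x$ and $y$; since the center is $0$, its two radial edges receive the labels $x$ and $y$, while its circumferential edge receives $x+y$. Thus a single append bumps the vertex-frequencies of $x$ and $y$ by one, and the edge-frequencies of $x$, $y$, and $x+y$ by one. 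I would extend the labeling directly in this way rather than through Lemma~\ref{lemma: adding friendship}, since the summand we are gluing on does not have a perfectly even edge distribution.

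First I would record the exact frequency data of the base labeling. In $F_k$ every element of $\mathbb{Z}_m$ occurs exactly once as a vertex label, while among the $3k = m + (k-1)$ edges, $k-1$ elements occur with frequency $2$ and the remaining $k+2$ occur with frequency $1$; moreover, the doubled edge labels form an explicit symmetric set (for instance $\{\pm 4i\}$ or $\{0\}\cup\{\pm(2+4i)\}$, depending on the parity of $k$ and on which of the two labelings of Theorem~\ref{theorem: m/2} is chosen). Tracking these doubled sets is the whole game: after $t$ appends the vertex count is $m + 2t$ and the edge count is $m + (k-1) + 3t$, so cordiality is preserved precisely as long as each new triangle bumps two hitherto-frequency-$1$ vertex labels to frequency $2$, and three hitherto-frequency-$1$ edge labels (namely $x$, $y$, and $x+y$, all distinct from one another and from the previously doubled labels) to frequency $2$. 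A counting check gives $k + \lfloor (k+2)/3 \rfloor = \lfloor (4k+2)/3 \rfloor = \lfloor 2m/3 \rfloor$, so the number of appends available is exactly the target, up to the $\varepsilon$ correction.

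Next I would exhibit, for each residue of $m \bmod 12$ (equivalently, the parity of $k$ together with $k \bmod 3$), an explicit list of $\lfloor 2m/3 \rfloor - \varepsilon - k$ triangles achieving this. As in Theorem~\ref{theorem: to m/2}, I expect the triangles to come in $\pm$-symmetric families whose circumferential sums $x+y$ run through a short arithmetic progression, chosen so that the $x,y$ fill the still-simple vertex and edge labels while the sums $x+y$ fill exactly the edge labels left at frequency $1$ by the base labeling. Because both the doubled edge set and the appended sums are symmetric about $0$, the distinctness verifications collapse to a handful of arithmetic (in)equalities among the representatives of $S$ from \eqref{S}, which I would check case by case.

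The main obstacle is the boundary bookkeeping, and in particular pinning down $\varepsilon$. The counting permits $t_{\max} = \lfloor (k+2)/3 \rfloor$ appends, but the supply of suitably $\pm$-symmetric fresh labels is exhausted one step early exactly when $k$ is even and $3 \nmid k$, which is the source of the correction $\varepsilon = 1$; when $3 \mid k$ the remainders divide cleanly and the full $t_{\max}$ is reached for either parity. I would therefore treat the cases $3 \mid k$, $k \equiv 1 \pmod 3$, and $k \equiv 2 \pmod 3$ separately, within each splitting on the parity of $k$ to select the appropriate base labeling, and confirm that the final index reached is $\lfloor 2m/3 \rfloor - \varepsilon$ with $\varepsilon$ as stated. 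Since $m$ is odd, Theorem~\ref{theorem: obstruction} imposes no genuine obstruction here, so the only limitation is the combinatorial reach of this symmetric construction itself.
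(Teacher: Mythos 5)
Your overall frame matches the paper's: start from the \emph{alternate} labelings of $F_k$ in Theorem~\ref{theorem: m/2}, record that the doubled edge labels form the symmetric set $\pm(2+4i)$ (together with $0$ when $k$ is even), and observe that appending a triangle $(x,y)$ preserves cordiality precisely when, cumulatively, the $x$'s, $y$'s and sums $x+y$ are all distinct and avoid that doubled set; your count $k+\lfloor(k+2)/3\rfloor=\lfloor 2m/3\rfloor$ is also the correct target. The genuine gap is that you never actually produce the required triangles: you ``expect'' them to form $\pm$-symmetric families checked case by case, but exhibiting a \emph{maximal} family of additively disjoint pairs inside the fresh label set is the entire content of the theorem, and it is not a handful of inequalities --- it is essentially the problem solved by Theorem~\ref{theorem: MAD}, whose proof requires the multi-case Appendix, Section~\ref{appendix}.

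The paper closes this gap with one observation you are missing: the complement of the doubled edge set is exactly $4\cdot S$, where $S$ is the standard symmetric representative interval \eqref{S} for $\mathbb{Z}_{(m+3)/2}$ when $k$ is odd, or that interval with $0$ deleted for $\mathbb{Z}_{(m+5)/2}$ when $k$ is even. Since $4$ is a unit of $\mathbb{Z}_m$ and the pairs of Remark~\ref{remark: MAD pairs with Z} close up under addition inside $S$ with ordinary integer arithmetic, they transport under multiplication by $4$ to exactly the triangles you need; the Remark then hands you $\lfloor\frac{m+3}{6}\rfloor$ pairs, respectively $\lfloor\frac{m+5}{6}\rfloor-1$ pairs, and the arithmetic of these two counts is precisely what produces $\varepsilon$. (One must also check that $\frac{m+3}{2}$, resp.\ $\frac{m+5}{2}$, is not $\equiv 6 \bmod 12$, which follows from the parity of $k$.) Note too that your phrasing ``the supply is exhausted one step early'' overstates matters: when $k$ is even the loss of one pair comes from $0$ being absent from the fresh set, and the theorem asserts only a construction, not that $F_{\lfloor 2m/3\rfloor}$ fails to be cordial in those cases. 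If you insist on avoiding the MAD machinery, you must write out explicit pair families for each residue class of $m$ modulo $12$, which amounts to redoing the Appendix from scratch.
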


\begin{proof}
    Begin with $k$ odd and the second construction of $F_k$ given in the proof of Theorem \ref{theorem: m/2}. Recall that construction had a uniform vertex distribution of $f_V(i) = 1$ for all $i$. The edge distribution was
    $f_E(\pm (2 + 4i)) = 2$ for $0 \leq i \leq \frac{k - 3}{2}$ and $f_E(j) = 1$ for the remaining elements of $\mathbb{Z}_m$.

    As a result, we may preserve cordiality by adding triangles with circumferential vertex labels $(a_r, b_r)$ as long as, cumulatively, $a_r, b_r, a_r + b_r$ are distinct and not of the form $\pm (2 + 4i)$ for $0 \leq i \leq \frac{k - 3}{2}$.
    The subset of $\mathbb{Z}_m$ of all elements distinct from $\pm (2 + 4i)$ for $0 \leq i \leq \frac{k - 3}{2}$ is given by
    \[ \{ -m - 1, -m + 3, \ldots, -8, -4, 0, 4, 8, \ldots, m - 3, m + 1 \} \]
    which can be written as $4\cdot S$ where $S$ is the set defined by
    \[ \left\{ -\frac{m + 1}{4}, -\frac{m + 1}{4} + 1, \ldots, -2, -1, 0,
              1, 2, \ldots, \frac{m + 1}{4} - 1, \frac{m + 1}{4}\right\}. \]
    Working with $\mathbb{Z}_{(m + 3)/2}$ in Remark \ref{remark: MAD pairs with Z} (and multiplying by $4$ at the end), we observe that we may add an additional $\floor*{\frac{m + 3}{6}}$ cordial triangles to $F_k$. (Note here that $\frac{m+3}2 \equiv 6 \mod 12$ implies $m \equiv 9 \mod 24$ which is impossible!) Writing
    $k = 2s + 1$ so that $m = 4s + 3$, we get
    \begin{align*}
        k + \floor*{\frac{m + 3}{6}}
        &=  2s + 1 + \floor*{\frac{2s + 3}{3}} \\
        &= \floor*{\frac{8s + 6}{3}}
        = \floor*{\frac{2m}{3}}
    \end{align*}
    as desired.

    Turn now to the case of
    $k$ even and use the second construction of $F_k$ given in the proof of Theorem \ref{theorem: m/2}. Recall that construction had a uniform vertex distribution of $f_V(i) = 1$ for all $i$. The edge distribution was $f_E(0) = 2$,
    $f_E(\pm (2 + 4i)) = 2$ for $0 \leq i \leq \frac{k - 4}{2}$, and $f_E(j) = 1$ for the remaining elements of $\mathbb{Z}_m$.

    As a result, we may preserve cordiality by adding triangles with circumferential vertex labels $(a_r, b_r)$ as long as, cumulatively, $a_r, b_r, a_r + b_r$ are distinct and not of the form $0$ or $\pm (2 + 4i)$ for $0 \leq i \leq \frac{k - 4}{2}$.
    The subset of $\mathbb{Z}_m$ of all elements distinct from $0$ and $\pm (2 + 4i)$ for $0 \leq i \leq \frac{k - 4}{2}$ is given by
    \[ \{ -m - 3, -m + 1, \ldots, -8, -4, \,\, 4, 8, \ldots, m - 1, m + 3 \} \]
    which can be written as $4\cdot S$ where $S$ is the set defined by
    \[ \left\{ -\frac{m + 3}{4}, -\frac{m + 3}{4} + 1, \ldots, -2, -1, \,\,
              1, 2, \ldots, \frac{m + 3}{4} - 1, \frac{m + 3}{4}\right\}. \]
    Working with $\mathbb{Z}_{(m + 5)/2}$ in Remark \ref{remark: MAD pairs with Z}, we observe that we may add an additional $\floor*{\frac{m + 5}{6}} - 1$ cordial triangles to $F_k$. (Note here that $\frac{m+5}2 \equiv 6 \mod 12$ implies $m \equiv 7 \mod 24$ which is impossible!) Writing $k = 2s$ so that
    $m = 4s + 1$, we get
    \begin{align*}
        k + \floor*{\frac{m + 5}{6}} - 1
        &=  2s + \floor*{\frac{2s + 3}{3}} - 1 \\
        &= \floor*{\frac{8s + 3}{3}} - 1
        \geq  \floor*{\frac{8s + 2}{3}} - 1
        = \floor*{\frac{2m}{3}} - 1.
    \end{align*}
    More precisely, $k + \floor*{\frac{m + 5}{6}} - 1$ is equal to $\floor*{\frac{2m}{3}}$ exactly when $3 \mid s$ and, otherwise, when
    $m = 12q + 5$ or $m = 12q + 9$, is one less.
\end{proof}

\section{Construction to $F_m$ for $3 \mid m$}

The following defines a shift of the labels of just
the circumferential vertices of $F_n$.
\begin{definition}
    Let $m, n \in \mathbb{N}$ with $m \geq 1$ and
    $\ell: V \rightarrow \mathbb{Z}_m$ be a labeling
    of the vertices of $F_n$. For $i_0 \in \mathbb{Z}_m$,
    write $F_n^{i_0}$ for $F_n$ equipped with a new
    vertex labeling, $\ell'$, given by
    \[
        \ell'(v) =
        \begin{cases}
            \ell(v), \text{\,\, if $v$ is the central vertex}\\
            \ell(v) + i_0, \text{\,\, otherwise}
        \end{cases}
    \]
    for $v \in V$.
\end{definition}

Note that $F_n^{i_0}$ shifts the radial edges by $i_0$ and the circumferential edges by $2i_0$.

\begin{theorem} \label{theorem: 2m/3 to m for 3 div m}
    Let $m \in \mathbb{N}$ be odd and divisible by $3$.
    Then $F_j$ is $\mathbb{Z}_m$-cordial for
    $\frac{2m}{3} \leq j \leq m$.
\end{theorem}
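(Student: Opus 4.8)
The plan is to treat the endpoint $j=m$ separately and to build everything else by attaching triangles to one carefully chosen base graph. Write $\ell = m/3$, which is odd since $m$ is odd, so that $\tfrac{2m}{3}=2\ell$ is an integer. The endpoint $F_m$ is already $\mathbb Z_m$-cordial by Theorem~\ref{theorem: Fm or F2m reduction}$(a)$, so it suffices to produce cordial labelings of $F_j$ for $2\ell \le j \le m-1$. Since $m\not\equiv 6 \pmod{12}$, Theorem~\ref{theorem: MAD} yields a family $P=\{(a_i,b_i)\}_{i=1}^{\ell}$ of MAD pairs whose vertices $A\cup B$ and sums $\Sigma=\{a_i+b_i\}$ partition $\mathbb Z_m$. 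Note that $0$ must be a sum (a pair $(0,b)$ would have sum $b$, colliding with the vertex $b$), and in fact $\sum_{\sigma\in\Sigma}\sigma=0$, because twice this sum is the sum of all of $\mathbb Z_m$, which is $0$ for $m$ odd. Let $(u,v)\in P$ denote the unique pair with $u+v=0$.

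The mechanism is to start from a base cordial $F_{2\ell}$ — whose edge labeling is automatically uniform ($f_E\equiv 2$, since $F_{2\ell}$ has $2m$ edges) — and to append the $\ell-1$ pairs of $P$ other than $(u,v)$, one triangle at a time. Attaching $(a_i,b_i)$ to the central $0$ adds edges $a_i$, $b_i$, $a_i+b_i$ and circumferential vertices $a_i,b_i$. Because $P$ is MAD, all appended edge labels are distinct, so each attachment moves exactly three edge labels from $2$ to $3$; after $t$ attachments this gives $r_E=3t$, matching edge-cordiality for $F_{2\ell+t}$ throughout. For the vertices I will arrange the base so that $f_V=2$ precisely on $(\Sigma\setminus\{0\})\cup\{u,v\}$ and $f_V=1$ elsewhere (in particular $f_V(0)=1$). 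Then the incremented labels $A\cup B\setminus\{u,v\}$ are exactly the nonzero count-$1$ labels, each moving cleanly from $1$ to $2$; one checks each intermediate $F_{2\ell+t}$ ($0\le t\le \ell-1$) has the right counts, reaching $F_{m-1}$ at $t=\ell-1$. Together with $F_m$ this covers all of $2\ell\le j\le m$.

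I expect the main obstacle to be the construction of the base $F_{2\ell}$ with this exact profile. A necessary condition appears at once: with the center labeled $0$, the total edge-label sum equals twice the sum of the circumferential vertex labels, and $f_E\equiv 2$ forces this total to be $0$, so the circumferential labels must sum to $0$. Since $\sum_{\sigma\in\Sigma}\sigma=0$, the prescribed count-$2$ set $(\Sigma\setminus\{0\})\cup\{u,v\}$ has label-sum equal to $u+v$; this is exactly why $(u,v)$ must be taken to be the zero-sum pair. The remaining work — exhibiting $2\ell$ circumferential pairs that realize this vertex multiset with perfectly uniform edge labels — is the genuinely delicate step, since naive MAD-type fillings fail this sum constraint.

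For that step I plan to give a direct construction in the style of Theorems~\ref{theorem: m/2} and~\ref{theorem: to m/2}, or to split the base into two interlocking families whose combined edge multiset is uniform, using the $\mathbb Z$-arithmetic of Remark~\ref{remark: MAD pairs with Z} together with the order-three subgroup $\langle\ell\rangle\le\mathbb Z_m$ and the circumferential shift $F_n^{i_0}$ to keep the distribution balanced across the three cosets. The subtlety to watch is precisely the behavior at $0$: because every MAD family is forced to place $0$ among its sums, the label $0$ cannot be boosted by appended triangles, which is what dictates both the choice $f_V(0)=1$ in the base and the decision to obtain $F_m$ itself from Theorem~\ref{theorem: Fm or F2m reduction}$(a)$ rather than as the final append.
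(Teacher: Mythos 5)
Your outline has the same skeleton as the paper's proof --- a base cordial labeling of $F_{2m/3}$ with uniform edge distribution $f_E\equiv 2$, followed by appending triangles one at a time whose vertices are exactly the remaining count-one labels and whose edge labels are cumulatively distinct, with $F_m$ handled separately via Theorem~\ref{theorem: Fm or F2m reduction}$(a)$ --- and your bookkeeping for why each intermediate $F_{2\ell+t}$ stays cordial is correct, as is the necessary-condition analysis forcing the zero-sum pair $(u,v)$ into the base's count-two set. However, there is a genuine gap: the base $F_{2\ell}$ with the prescribed profile is never constructed. You explicitly defer it (``I plan to give a direct construction\dots or to split the base into two interlocking families\dots''), and this is precisely where all the real work in the paper's argument lives. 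The paper builds it by first constructing a special $F_{m/3}$ exploiting $3\mid m$ (circumferential pairs $(m-1,1),(2,4),(5,7),\ldots,(m-4,m-2)$, whose vertices are the non-multiples of $3$ and whose circumferential sums $6i$ sweep out the multiples of $3$, giving $f_E\equiv 1$), then doubling it as $F_{m/3}^{\,1}+F_{m/3}^{\,-1}$, and finally repairing the resulting near-cordial labeling by swapping the two triangles $(0,\pm 2)$ for two copies of $(2,-2)$. None of your sketched alternatives is carried out, and the sum-constraint consistency check you perform is necessary but nowhere near sufficient for existence.

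A secondary point of difference worth noting: you propose to take the appended triangles from a MAD family supplied by Theorem~\ref{theorem: MAD} and then demand that the base's count-two vertex set equal $(\Sigma\setminus\{0\})\cup\{u,v\}$; this couples the base construction rigidly to the particular MAD family, making the missing step harder than it needs to be. The paper works in the opposite order: it constructs the base first, and the pairs to append then come for free as the leftover triangles of the special $F_{m/3}$ labeling (all but $(2,4)$ and $(-2,-4)$), which form an additively disjoint family whose vertices are exactly the base's count-one nonzero labels. That ordering is what makes the argument close; if you complete your version, you will either have to reproduce something equivalent to the paper's base or prove a separate existence statement for bases adapted to a prescribed MAD family.
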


\begin{proof}
    Write $m = 3(2k + 1)$ so that $\frac{m}{3} = 2k + 1$.
    We begin with an alternate construction of $F_{2k + 1}$ that depends on $3 \mid m$. Label the
    central vertex with $0$. Then add $2k + 1$
    circumferential triangles with labels
    \[ (m - 1,1), \,\, (2, 4), (5, 7), \ldots,
        (m - 4, m - 2). \]
    The sums are then
    $6i$ for $0 \leq i \leq 2k$.
    The distribution is
    $f_V(3i) = 0$ for $0 < 3i < m$ with $f_V(i) = 1$
    otherwise and $f_E(i) = 1$ for all $i$.

    Consider now $F_{2k + 1}^1 + F_{2k + 1}^{-1}$. This
    labeling has $\frac{2m}{3} = 4k + 2$ triangles, but
    is not quite cordial. However, we will get a
    $\mathbb{Z}_m$-cordial labeling of $F_{4k + 2}$ if
    we remove the triangles with circumferential vertices
    $(0,\pm 2)$ and replace them with \emph{two} copies of
    $(2, -2)$. The distribution of this construction is
    $f_V(\pm 2) = 2$, $f_V(3i) = 2$ for $0 < 3i < m$, and $f_V(i) = 1$ otherwise and $f_E(i) = 2$ for all $i$.

    We may now add to $F_{4k + 2}$ the triangles from $F_{2k + 1}$, one at a time, except for $(2, 4)$ and $(-2,-4)$. This preserves cordiality and finishes the proof for $F_j$
    with $\frac{2m}{3} \leq j \leq m - 2$. It only
    remains to construct $F_{m - 1}$. This can be done
    by taking the cordial construction of $F_m$ from Theorem
    \ref{theorem: Fm or F2m reduction}(a) and
    removing the triangles with circumferential vertices
    $(0,\pm 1)$ and replacing them with the triangle $(1, -1)$.
\end{proof}

Combining Theorems \ref{theorem: Fm or F2m reduction}, \ref{theorem: to m/2}, \ref{theorem: F2m/3-1},
\ref{theorem: 2m/3 to m for 3 div m} and Corollary \ref{corollary up to m/3 via MAD}, we establish Conjecture \ref{conjecture for cordiality of friendship} for the case of $m$ odd with $3 \mid m$.

\begin{corollary} \label{corollary done odd m div by 3}
        Let $m, n \in \mathbb{N}$ with $m$ odd and divisible by $3$.
        Then $F_n$ is $\mathbb{Z}_m$-cordial for all $n$.
\end{corollary}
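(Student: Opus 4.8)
The plan is to treat the statement as a bookkeeping argument: assemble the ranges of $j$ for which $F_j$ has already been shown to be $\mathbb{Z}_m$-cordial, verify that together they cover every $j$ with $0 \le j \le m$, and then invoke the reduction theorem to pass from these finitely many cases to arbitrary $n$.

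First I would apply Theorem \ref{theorem: Fm or F2m reduction}(a). Since $m$ is odd, $F_n$ is $\mathbb{Z}_m$-cordial for all $n$ precisely when $F_1, F_2, \ldots, F_{m-1}$ are $\mathbb{Z}_m$-cordial. Hence it suffices to establish cordiality of $F_j$ for every $0 \le j \le m$ (the endpoints $j=0$ and $j=m$ being trivial or supplied directly by the reduction). Before stitching ranges together, I would record the two arithmetic facts that unlock the relevant theorems: because $m$ is odd with $3 \mid m$, we have $m \equiv 3$ or $9 \mod 12$, so in particular $m \not\equiv 6 \mod 12$, and the residue of $m$ modulo $12$ avoids $\{2,4,6,8,10\}$. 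The first fact licenses Corollary \ref{corollary up to m/3 via MAD}, and the second, together with $m$ odd, licenses Theorems \ref{theorem: to m/2}, \ref{theorem: F2m/3-1}, and \ref{theorem: 2m/3 to m for 3 div m}.

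Next I would combine the ranges. Theorem \ref{theorem: to m/2} (resting on the MAD machinery behind Corollary \ref{corollary up to m/3 via MAD}) covers $0 \le j \le \lfloor m/2 \rfloor$. Writing $m = 2k+1$, Theorem \ref{theorem: F2m/3-1} covers $\lfloor m/2 \rfloor \le j \le \lfloor 2m/3 \rfloor - \varepsilon$ with $\varepsilon \in \{0,1\}$. Finally Theorem \ref{theorem: 2m/3 to m for 3 div m} covers $\tfrac{2m}{3} \le j \le m$. Since $3 \mid m$, the value $\tfrac{2m}{3} = \lfloor 2m/3 \rfloor$ is an integer, so the last two intervals are $[\lfloor m/2\rfloor,\, \lfloor 2m/3\rfloor - \varepsilon]$ and $[\lfloor 2m/3\rfloor,\, m]$. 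If $\varepsilon = 0$ they overlap at $\lfloor 2m/3 \rfloor$, and if $\varepsilon = 1$ their endpoints $\lfloor 2m/3\rfloor - 1$ and $\lfloor 2m/3\rfloor$ are consecutive integers; in either case their union is the full interval $[\lfloor m/2\rfloor,\, m]$ with no gap. Joined with the first range, this yields cordiality of $F_j$ for all $0 \le j \le m$, and Theorem \ref{theorem: Fm or F2m reduction}(a) then completes the proof.

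The only genuinely delicate point is the $\varepsilon$ in Theorem \ref{theorem: F2m/3-1}: a priori one might fear that losing a triangle ($\varepsilon = 1$) opens a gap between the $F_{2m/3-1}$ range and the $F_{2m/3}$ range. The observation that closes this is exactly the divisibility $3 \mid m$, which forces $\tfrac{2m}{3}$ to be an integer, so Theorem \ref{theorem: 2m/3 to m for 3 div m} begins precisely at $\lfloor 2m/3\rfloor$ and meets the shortened interval end-to-end. Everything else reduces to confirming that the hypotheses (oddness of $m$ and the residue conditions) hold in the present case, which is immediate.
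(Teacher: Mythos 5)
Your proposal is correct and follows essentially the same route as the paper, which likewise establishes the corollary by combining Theorem \ref{theorem: Fm or F2m reduction}(a) with the ranges supplied by Corollary \ref{corollary up to m/3 via MAD} and Theorems \ref{theorem: to m/2}, \ref{theorem: F2m/3-1}, and \ref{theorem: 2m/3 to m for 3 div m}. Your explicit check that $3 \mid m$ makes $\frac{2m}{3}$ an integer, so the $\varepsilon = 1$ case leaves no gap, is exactly the bookkeeping the paper leaves implicit.
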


\section{Closing Remarks}

A proof of Conjecture \ref{conjecture: weakly cordial}, that trees are
weakly $A$-cordial for any abelian group $A$, seems far off at this time. However, further partial results would be very interesting.

It would also be very interesting to verify Conjecture \ref{conjecture for cordiality of friendship} in general, that is, to show that
$F_n$ is always $\mathbb{Z}_m$-cordial except when
\[ 2 \mid n, \,\, 4 \nmid n, \,\, m = \frac{3n}{d} \]
for $d$ a positive, odd divisor of $3n$.

For $m$ odd, ad hoc arguments can be given to show $F_n$ is $\mathbb{Z}_m$-cordial
in numerous other cases. For example,
for $m = 8k + 1$, the missing cordiality from $F_m$ down to
$F_{\floor*{2m/3}}$ can be constructed with the following techniques: Begin by removing $(0,\pm 1)$ and replacing those triangles with $(1, -1)$ in the construction from Theorem \ref{theorem: Fm or F2m reduction}(a).
After that remove the roughly $\frac{m}{4}$ triangles
of the form $(2 + 4i, 3 + 4i)$. After that, remove
triangles $(8+16i, 9+16i)$. Finally, remove additively disjoint
pairs from the remaining vertices labeled with multiples of $16$.

It is worth noting that Theorem \ref{theorem: Fm or F2m reduction} and Corollary \ref{corollary up to m/3 via MAD} allow computer verification of $\mathbb{Z}_m$-cordiality of $F_n$ for all $n$ when $m$ is not very large. At the very least, this would allow verification of Conjecture \ref{conjecture for cordiality of friendship} for many $m$.

For $m$ even, prohibited cordiality
cases from Theorem \ref{theorem: obstruction} come into play and, presumably,
increase the complexity. For example, it is fairly straightforward to
inductively show that
$F_n$ is $\mathbb{Z}_2$-cordial if and only if $n$ is odd or
if $n$ is (even and) divisible by $4$. However, when $4 \mid m$, the cases are expected to be easier. For example, using techniques from this paper, it is straightforward
to show that $F_n$ is $\mathbb{Z}_4$-cordial
for all $n$. In general, the case of $m$ even
seems more difficult than the case of $m$ odd.

\section{Appendix: MAD Details} \label{appendix}

Here we give the explicit MAD pairs and their sums that are constructed from the algorithm in Theorem \ref{theorem: MAD}.  For the sake of exposition, we have re-indexed the $a_i$ and $b_i$. The results depend upon the residue class of $m$ modulo $12$ and so we write $m = 12n + r$, with $0 \leq r \leq 11$. There are four main cases:

\medskip

\noindent \textbf{Case 1:} $r=0$ or $1$.  In this case $\mu = 4n$.

\begin{equation}
\label{MAD01}
    \tag{MAD-1}
    (a_i, b_i) =
    \begin{cases}
    (2n-2i, \: 2n+i), & 1 \leq i < n,\\
    (2i-2n+1, \: 6n-i-1), & n \leq i < 2n,\\
    (-a_{i-2n+1}, \: -b_{i-2n+1}), & 2n \leq i \leq 4n-2,\\
    (-3n, 6n), & i = 4n-1,\\
    (-2n, 2n), & i = 4n.
    \end{cases}
\end{equation}
This yields the following sets in which each element appears exactly once (with ellipses denoting consecutive integers):
\begin{align*}
    \{a_i\} &= \pm\{1, \ldots, 2n-1\} \cup \{-2n, -3n\},\\
    \{b_i\} &= \pm\{2n+1,\ldots,3n-1, 4n, \ldots, 5n-1\} \cup \{2n, 6n\},\\
    \{a_i + b_i\} &= \pm\{0, 3n+1,\ldots,4n-1,5n,\ldots,6n-1\} \cup \{3n\}.
\end{align*}

\noindent \textbf{Case 2:} $r=3$, $4$, or $5$.  [See below for $r=2$.]  In this case $\mu = 4n+1$.

\begin{equation}
\label{MAD2345}
    \tag{MAD-2}
    (a_i, b_i) =
    \begin{cases}
    (2n - 2i +2, \: 2n+i), & 1 \leq i \leq n,\\
    (2i-2n-1, \: 6n-i+2), & n < i \leq 2n,\\
    (-a_{i-2n}, \: -b_{i-2n}), & 2n < i \leq 4n,\\
    (-3n-1, \: 3n+1), & i = 4n+1.
    \end{cases}
\end{equation}
We have the following:
\begin{align*}
    \{a_i\} &= \pm\{1, \ldots, 2n\} \cup \{-3n-1\},\\
    \{b_i\} &= \pm\{2n+1,\ldots, 3n,4n+2, \ldots, 5n+1\} \cup \{3n+1\},\\
    \{a_i + b_i\} &= \pm\{ 0, 3n+2,\ldots,4n+1,5n+2,\ldots,6n+1\}.
\end{align*}

When $r=2$, we have $\mu = 4n$, and we modify the construction \eqref{MAD2345} by removing the pair $(a_{4n}, b_{4n}) = (-2n+1, \: -4n-2)$.  This is because in the case $r=2$, we have $a_{2n} + b_{2n} = a_{4n}+b_{4n} = 6n+1$.

\bigskip

\noindent\textbf{Case 3:} $r=7$. [See below for $r=6$.] In this case $\mu = 4n+2$.

\begin{equation}
    \label{MAD7}
    \tag{MAD-3}
    (a_i, b_i) =
    \begin{cases}
     (2n - 2i +2, \: 2n+i), & 1 \leq i \leq n,\\
    (2i-2n-1, \: 6n-i+2), & n < i \leq 2n,\\
    (-a_{i-2n}, \: -b_{i-2n}), & 2n < i \leq 4n,\\
    (-3n-1, \: 6n+2), & i = 4n+1,\\
    (-6n-3,\: 6n+3), & i = 4n+2.
    \end{cases}
\end{equation}
We have the following:
\begin{align*}
    \{a_i\} &= \pm\{1,\ldots,2n\} \cup \{-3n-1, -6n-3\},\\
    \{b_i\} &= \pm\{2n+1,\ldots,3n,4n+2,\ldots,5n+1\}\cup\{6n+2,\: 6n+3\},\\
    \{a_i + b_i\} &= \pm\{ 0, 3n+2,\ldots,4n+1,5n+2,\ldots,6n+1\} \cup \{3n+1\}.
\end{align*}

In the case $r=6$ (where $\mathbb{Z}_m$ is not MAD), we remove the final pair $(a_{4n+2},b_{4n+2})$ from the construction \eqref{MAD7}.

\bigskip

\noindent\textbf{Case 4:} $r=9$, $10$, or $11$.  [See below for $r=8$.]  In this case $\mu = 4n+3$.

\begin{equation}
    \label{MAD8-11}
    \tag{MAD-4}
    (a_i, b_i) =
    \begin{cases}
        (2n-2i+3, \: 2n+i+1), & 1 \leq i \leq n+1,\\
        (2i - 2n - 2, \: 6n -i + 5), & n+2 \leq i \leq 2n+1,\\
        (-a_{i-2n-1}, \: -b_{i-2n-1}), & 2n+2 \leq i \leq 4n+2,\\
        (-5n-4, \: 5n+4), & i = 4n+3.
    \end{cases}
\end{equation}
We have the following:
\begin{align*}
    \{a_i\} &= \pm\{1, \ldots, 2n+1\} \cup \{-5n-4\},\\
    \{b_i\} &= \pm\{2n+2, \ldots, 3n+2,4n+4, \ldots, 5n+3\} \cup \{5n+4\},\\
    \{a_i + b_i\} &= \pm\{ 0, 3n+3,\ldots,4n+3,5n+5,\ldots,6n+4\}.
\end{align*}

When $r=8$, we have $\mu = 4n+2$, and we modify the construction \eqref{MAD8-11} by removing the pair $(a_{4n+2},\:b_{4n+2}) = (-2n, -4n-4)$.  This is because in the case $r=8$, we have $a_{2n+1} + b_{2n+1} = a_{4n+2} + b_{4n+2} = 6n+4$.


\newpage

\bibliographystyle{alpha}
\bibliography{refs}

\begin{thebibliography}{TWW19}

\bibitem[Cah87]{Cahit}
I.~Cahit.
\newblock Cordial graphs: a weaker version of graceful and harmonious graphs.
\newblock {\em Ars Combin.}, 23:201--207, 1987.

\bibitem[CE00]{NPComplete}
N.~Cairnie and K.~Edwards.
\newblock The computational complexity of cordial and equitable labelling.
\newblock {\em Discrete Math.}, {\bf 216}:29--34, 2000.

\bibitem[CG21]{CichaczGorlich21}
S.~Cichacz and A.~G\"{o}rlich.
\newblock $\mathbb{Z}_2 \times \mathbb{Z}_2$-cordial cycle-free hypergraphs.
\newblock {\em Discuss. Math. Graph Theory}, {\bf 41}:1021--1040, 2021.

\bibitem[CGT13]{CichaczGorlichTuza13}
S.~Cichacz, A.~G\"orlich, and Z.~Tuza.
\newblock Cordial labeling of hypertrees.
\newblock {\em Discrete Math.}, {\bf 313}:2518--2524, 2013.

\bibitem[Cic22]{Cichacz22}
S.~Cichacz.
\newblock On some graph-cordial abelian groups.
\newblock {\em Discrete Math.}, {\bf 345}:112815, 7pp., 2022.

\bibitem[DKN17]{treesAre6Cordial}
K.~Driscoll, E.~Krop, and M.~Nguyen.
\newblock All trees are six-cordial.
\newblock {\em Electron. J. Graph Theory Appl.}, {\bf 5}:21--35, 2017.

\bibitem[Dri19]{treesAre7Cordial}
K.~Driscoll.
\newblock All trees are seven-cordial.
\newblock {\em {\rm arXiv:1909.12351}}, 2019.

\bibitem[ERS66]{ErdosRenyiSos}
P.~Erd\H{o}s, A.~R\'{e}nyi, and V.T. S\'{o}s.
\newblock On a problem of graph theory.
\newblock {\em Studia Sci. Math. Hungar.}, {\bf 1}:215--235, 1966.

\bibitem[Gal98]{gallian}
J.A. Gallian.
\newblock A dynamic survey of graph labeling.
\newblock {\em Electron. J. Combin.}, {\bf 5}:1--47, 1998.

\bibitem[Har69]{harary}
F.~Harary.
\newblock {\em Graph Theory}.
\newblock Addison-Wesley Publishing Co., Reading, Mass.-Menlo Park,
  Calif.-London, 1969.

\bibitem[HHOr04]{Haanpaa04}
H.~Haanp\"{a}\"{a}, A.~Huima, and P.~\"{O}sterg\aa rd.
\newblock Sets in {$\mathbb{Z}_n$} with distinct sums of pairs.
\newblock {\em Discrete Appl. Math.}, {\bf 138}:99--106, 2004.
\newblock Optimal discrete structures and algorithms (ODSA 2000).

\bibitem[HOr07]{Haanpaa07}
H.~Haanp\"{a}\"{a} and P.~\"{O}sterg\aa rd.
\newblock Sets in abelian groups with distinct sums of pairs.
\newblock {\em J. Number Theory}, {\bf 123}:144--153, 2007.

\bibitem[Hov91]{hovey}
M.~Hovey.
\newblock {$A$}-cordial graphs.
\newblock {\em Discrete Math.}, {\bf 93}:183--194, 1991.

\bibitem[Kan14]{friendshipsAre7Cordial}
K.~Kanani.
\newblock 7-cordial labeling of some standard graphs.
\newblock {\em Int. J. Appl. Math. Res.}, {\bf 3}:547--560, 2014.

\bibitem[KM15]{friendshipsAre5Cordial}
K.~Kanani and M.~Modha.
\newblock 5-cordial labeling of some standard graphs.
\newblock In {\em Proceedings of 8th National Level Science Symposium},
  volume~{\bf 2}, pages 43--48, 2015.

\bibitem[KR14]{friendshipsAre4Cordial}
K.~Kanani and N.~Rathod.
\newblock Some new 4-cordial graphs.
\newblock {\em J. Math. Comp. Sci.}, {\bf 4}:834--848, 2014.

\bibitem[PP21]{patriasPechenik}
R.~Patrias and O.~Pechenik.
\newblock Path-cordial abelian groups.
\newblock {\em Australas. J. Combin.}, {\bf 80}:157--166, 2021.

\bibitem[PW12]{pechenikWise}
O.~Pechenik and J.~Wise.
\newblock Generalized graph cordiality.
\newblock {\em Discuss. Math. Graph Theory}, {\bf 32}:557--567, 2012.

\bibitem[Ris07]{riskin}
A.~Riskin.
\newblock $\mathbb{Z}_{2}^{2}$-cordiality of complete and complete bipartite
  graphs.
\newblock {\em {\rm arXiv:0709.0290}}, 2007.

\bibitem[TWW19]{TuczynskiWenusKesek19}
M.~Tuczyński, P.~Wenus, and K.~Wesek.
\newblock On cordial labeling of hypertrees.
\newblock {\em Discrete Math. Theor. Comput. Sci.}, {\bf 21}:\#5658, 14pp.,
  2019.

\end{thebibliography}

\end{document}